\documentclass[reqno, 10pt]{amsart}
\usepackage{amssymb,mathrsfs}
\usepackage[usenames, dvipsnames]{color}
\usepackage{esint}
\usepackage{hyperref}
\usepackage{todonotes}
\usepackage{verbatim}

\usepackage{cite}
\usepackage[nobysame,non-sorted-cites]{amsrefs}
\def\MR#1{}

\theoremstyle{plain}
\newtheorem{theorem}{Theorem}[section]
\newtheorem{lemma}[theorem]{Lemma}
\newtheorem{corollary}[theorem]{Corollary}
\newtheorem{proposition}[theorem]{Proposition}

\theoremstyle{definition}

\theoremstyle{remark}
\newtheorem{remark}[theorem]{Remark}

\newcommand{\dv}{\operatorname{div}}
\newcommand{\osc}{\operatorname{osc}}

\newcommand{\dist}{\operatorname{dist}}
\newcommand{\diam}{\operatorname{diam}}

\newcommand{\sgn}{\operatorname{sgn}}

\numberwithin{equation}{section}

\newcommand{\bN}{\mathbb{N}}

\newcommand{\bR}{\mathbb{R}}

\newcommand{\bS}{\mathbb{S}}

\newcommand\cB{\mathcal{B}}

\newcommand\cD{\mathcal{D}}

\newcommand\cF{\mathcal{F}}

\newcommand\cK{\mathcal{K}}

\newcommand\sA{\mathscr{A}}

\makeatletter
\def\dashint{\operatorname%
{\,\,\text{\bf--}\kern-.98em\DOTSI\intop\ilimits@\!\!}}
\makeatother

\begin{document}
\title[The perfect conductivity problem]{Asymptotics of the solution to the perfect conductivity problem with $p$-Laplacian}
\author[H. Dong]{Hongjie Dong}

\author[Z. Yang]{Zhuolun Yang}

\author[H. Zhu]{Hanye Zhu}

\address[H. Dong]{Division of Applied Mathematics, Brown University, 182 George Street, Providence, RI 02912, USA}
\email{Hongjie\_Dong@brown.edu}

\address[Z. Yang]{Division of Applied Mathematics, Brown University, 182 George Street, Providence, RI 02912, USA}
\email{zhuolun\_yang@brown.edu}

\address[H. Zhu]{Division of Applied Mathematics, Brown University, 182 George Street, Providence, RI 02912, USA}
\email{hanye\_zhu@brown.edu}

\thanks{H. Dong was partially supported by the NSF under agreement DMS-2055244.}
\thanks{Z. Yang was partially supported by Simons Foundation Institute Grant Award ID 507536 and the AMS-Simons Travel Grant.}
\thanks{H. Zhu was partially supported by the NSF under agreement DMS-2055244.}

\subjclass[2020]{35B40, 35J92, 35Q74, 74E30, 74G70}

\keywords{Optimal gradient estimates, high contrast coefficients, perfect conductivity problem, $p$-Laplace equation, blow-up and asymptotics}
\begin{abstract}
We study the perfect conductivity problem with closely spaced perfect conductors embedded in a homogeneous matrix where the current-electric field relation is the power law $J=\sigma|E|^{p-2}E$. The gradient of solutions may be arbitrarily large as $\varepsilon$, the distance between inclusions, approaches to 0. To characterize this singular behavior of the gradient in the narrow region between two inclusions, we capture the leading order term of the gradient. This is the first gradient asymptotics result on the nonlinear perfect conductivity problem.
\end{abstract}

\maketitle

\section{Introduction and Main results}

Our study is instigated by the damage analysis in the fiber composite materials \cite{BASL}. Particularly, when fibers are closely packed and in high-contrast to the background matrix in terms of material properties, the electric field could be amplified by the composite micro-structure. In this article, we investigate the specific scenario in which the fiber inclusions are perfect conductors, and the background matrix follows the current-electric field relation described by the power law:
\begin{equation}\label{power_law}
J = \sigma |E|^{p-2} E, \quad p > 1,
\end{equation}
where $J$, $E$, and $\sigma$ represent current, electric field, and conductivity, respectively. This power law  has physical relevance across various materials, including dielectrics, plastic moulding, plasticity phenomena, viscous flows in glaciology, electro-rheological and thermo-rheological fluids. We refer to \cites{AntRod,BIK,GarKoh,Idiart,LevKoh,Ruzicka,Suquet} and the references therein.

Before stating our results, let us describe the mathematical setup: let $\Omega \subset \bR^n$ be a bounded domain with $C^{2}$ boundary, and let $\cD_{1}^{0}$ and $\cD_{2}^{0}$
be two $C^{2}$ open sets with $\diam(\cD_i^0)>c>0$ and $\dist(\cD_{1}^{0} \cup \cD_{2}^{0}, \partial \Omega) > c > 0$, touching at the origin with the inner normal direction of $\partial{\cD}_{1}^{0}$ being the positive $x_{n}$-axis. We write the variable $x$ as $(x',x_{n})$, where $x' \in \bR^{n-1}$. For $\varepsilon > 0$, translating $\cD_{1}^{0}$ and $\cD_{2}^{0}$ by ${\varepsilon}/{2}$ along the $x_{n}$-axis, we obtain
$$\cD_{1}^{\varepsilon}:=\cD_{1}^{0}+(0',{\varepsilon}/{2})\quad\mbox{and}\quad\,\cD_{2}^{\varepsilon}
:=\cD_{2}^{0}-(0',{\varepsilon}/{2}).$$
We denote $\widetilde{\Omega}^\varepsilon := \Omega \setminus \overline{(\cD_1^\varepsilon \cup \cD_2^\varepsilon)}$.

The perfect conductivity problem incorporating the power law \eqref{power_law} can be modeled by the following $p$-Laplace equation with $p >1$:
\begin{equation}\label{equinfty}
\left\{
\begin{aligned}
-\dv (|D u_\varepsilon|^{p-2} D u_\varepsilon) &=0  &&\mbox{in }\widetilde{\Omega}^\varepsilon,\\
u_\varepsilon &= U_i^\varepsilon &&\mbox{on}~\overline{\cD_{i}^\varepsilon},~i=1,2,\\
\int_{\partial \cD_i^\varepsilon} |D u_\varepsilon|^{p-2} D u_\varepsilon \cdot \nu &=0,  && i=1,2,\\
 u_\varepsilon &= \varphi  &&\mbox{on } \partial \Omega,
\end{aligned}
\right.
\end{equation}
where $\varphi\in{C}^{2}(\partial\Omega)$ is given, $\nu = (\nu_1, \ldots, \nu_n)$ denotes the outer normal vector on $\partial{\cD}_{1}^\varepsilon \cup \partial{\cD}^\varepsilon_{2}$ (pointing away from  ${\cD}_{1}^\varepsilon \cup {\cD}^\varepsilon_{2}$), and $U_1^\varepsilon$, $U_2^\varepsilon$ are two constants determined by \eqref{equinfty}$_3$. Here and throughout the paper, we adopt the notation
$$Du_\varepsilon\cdot \nu(x):= \lim_{t\to 0_+}\frac{u_\varepsilon(x+t\,\nu(x))-u_\varepsilon(x)}{t} \quad\mbox{and} \quad Du_\varepsilon(x):= [Du_\varepsilon\cdot \nu(x)]\nu(x)$$
for $x\in \partial \cD_i^\varepsilon$, $i=1,2$.

The solution $u_\varepsilon \in W^{1,p}
(\Omega)$ can be viewed as the unique function which has the minimal energy in appropriate function space: $I_p[u] = \min_{v \in \sA^\varepsilon}I_p[v]$, where
\begin{equation}\label{minimizer}
\begin{aligned}
I_p[v] &:= \int_{\Omega} |\nabla v|^p, \quad v \in \sA^\varepsilon,\\
\sA^\varepsilon &:= \{ v \in W^{1,p}(\Omega): \nabla v \equiv 0 ~~\mbox{in}~~\cD_1^\varepsilon \cup \cD_2^\varepsilon, v = \varphi ~~\mbox{on}~~\partial\Omega \}.
\end{aligned}
\end{equation}
 We refer the reader to the Appendix of \cite{BLY1} for the derivation of \eqref{equinfty} and its equivalence with \eqref{minimizer}. Although this derivation specifically addresses the case when $p=2$, the argument can be readily applied to $p>1$ with slight modifications.

The perfect conductivity problem \eqref{equinfty} with $p=2$ has undergone thorough studies. It was proved by Ammari et al. in \cite{AKL} and \cite{AKLLL} that, when $\cD_{1}$ and $\cD_{2}$ are disks of comparable radii in $\mathbb{R}^{2}$, the blow-up rate of the gradient of the solution is $\varepsilon^{-1/2}$ as $\varepsilon$ goes to zero; Yun in \cites{Y1,Y2} generalized the above mentioned result for two strictly convex inclusions in $\mathbb{R}^{2}$. These gradient estimates in dimension $n = 2$ were localized and extended to higher dimensions by Bao, Li, and Yin in \cite{BLY1}:
\begin{equation*}
\| \nabla u_\varepsilon \|_{L^\infty(\widetilde{\Omega}^\varepsilon)} \le
\left\{
\begin{aligned}
&C\varepsilon^{-1/2} \|\varphi\|_{C^2(\partial \Omega)} &&\mbox{when}~n=2,\\
&C|\varepsilon \ln \varepsilon|^{-1} \|\varphi\|_{C^2(\partial \Omega)} &&\mbox{when}~n=3,\\
&C\varepsilon^{-1} \|\varphi\|_{C^2(\partial \Omega)} &&\mbox{when}~n\ge 4.
\end{aligned}
\right.
\end{equation*}
These bounds were shown to be optimal in the paper and they are independent of the shape of inclusions, as long as the inclusions are relatively strictly convex. Moreover, numerous studies have been conducted into characterizing the asymptotic behavior of $\nabla u$, which are significant in practical applications. For further works on the linear perfect conductivity problem, see e.g. \cites{ACKLY,ADY,BT1,BT2,CY,DL,DZ,Gor,KLY1,KLY2,KL,L,LLY,LWX,LimYun} and the references therein.

The study on the nonlinear perfect conductivity problem \eqref{equinfty} is less comprehensive. The only results were given by Gorb and Novikov \cite{GorNov} and Ciraolo and Sciammetta \cite{CirSci}. They proved that for $n \ge 2$,
$$
\| \nabla u_\varepsilon\|_{L^\infty(\widetilde \Omega^\varepsilon)} \le
\left\{
\begin{aligned}
&C\varepsilon^{-\frac{n-1}{2(p-1)}} && \mbox{when}~p > \frac{n+1}{2},\\
&C\varepsilon^{-1}|\ln \varepsilon|^{\frac{1}{1-p}} && \mbox{when}~p = \frac{n+1}{2},\\
&C\varepsilon^{-1} && \mbox{when}~1<p < \frac{n+1}{2}.
\end{aligned}
\right.
$$
These bounds were shown to be optimal in their respective papers. In this paper, we give a more precise characterization of the gradient by capturing its leading order term in the asymptotics expansion.

It is noteworthy that, for the linear case in dimension two, solutions to the perfect conductivity problem and the insulated conductivity problem, representing the two extremes of conductivity, are harmonic conjugate to each other as shown in \cite{AKL}. Therefore, the behavior of their gradients is essentially identical due to the Cauchy-Riemann equation. The authors of this paper in \cite{DYZ23} studied the insulated conductivity problem with $p$-Laplacian, and identified the optimal blow-up exponent in dimension two. It turns out the gradient behaves significantly different from that of the solution to \eqref{equinfty} in dimension two. This showcases an intriguing feature of the nonlinear conductivity problem. For more results on the linear insulated conductivity problem, we refer to \cites{BLY2, DLY, DLY2, LY, LY2, We, Y3}.

To study the asymptotic behavior of $u_\varepsilon$, the solution to \eqref{equinfty}, it is important to study the limiting problem \eqref{minimizer} with $\varepsilon = 0$. We will show that the minimizing problem \eqref{minimizer} with $\varepsilon = 0$ is equivalent to
\begin{equation}\label{touching_equation_1}
\left\{
\begin{aligned}
-\dv (|D u_0|^{p-2} D u_0) &=0  &&\mbox{in }\widetilde{\Omega}^0,\\
u_0 &= U_0 &&\mbox{on}~\overline{\cD_{1}^0}\cup\overline{\cD_{2}^0},\\
\int_{\partial \cD_1^0 \cup \partial \cD_2^0} |D u_0|^{p-2} D u_0 \cdot \nu &=0,\\
 u_0 &= \varphi  &&\mbox{on } \partial \Omega
\end{aligned}
\right.
\end{equation}
for a constant $U_0$ when $p \ge (n+1)/2$, and is equivalent to
\begin{equation}\label{touching_equation_2}
\left\{
\begin{aligned}
-\dv (|D u_0|^{p-2} D u_0) &=0  &&\mbox{in }\widetilde{\Omega}^0,\\
u_0 &= U_i &&\mbox{on}~\overline{\cD_{i}^0}\setminus\{0\},~i=1,2,\\
\int_{\partial \cD_i^0} |D u_0|^{p-2} D u_0 \cdot \nu &=0, &&i=1,2,\\
 u_0 &= \varphi  &&\mbox{on } \partial \Omega
\end{aligned}
\right.
\end{equation}
for constants $U_1$ and $U_2$ when $p < (n+1)/2$. We would like to clarify a misunderstanding in the papers \cites{CirSci2, CirSci}. In \cites{CirSci2, CirSci}, the authors implicitly claimed that the minimizing problem \eqref{minimizer} with $\varepsilon = 0$ is universally equivalent to \eqref{touching_equation_1}, which is not the case. We will justify this in Theorem \ref{thm_A1}. We emphasize that while the minimizer $u_0$ of \eqref{minimizer} with $\varepsilon = 0$ always takes the same value in $\overline{\cD_{1}^0}$ and $\overline{\cD_{2}^0}$ when $p \ge (n+1)/2$, it may take different values when $1<p < (n+1)/2$. On the other hand, the flux along $\partial \cD_1^0$, denoted by
\begin{equation}\label{flux}
\cF:= \int_{\partial \cD_1^0} |D u_0|^{p-2} D u_0 \cdot \nu,
\end{equation}
might not be zero when $p \ge (n+1)/2$, but it must be zero when $p < (n+1)/2$.

By the regularity of $\cD_1^\varepsilon$ and $\cD_2^\varepsilon$, we can assume that near the origin,  the part of $\partial \cD_1^\varepsilon$ and $\partial \cD_2^\varepsilon$, denoted by $\Gamma_+^\varepsilon$ and $\Gamma_-^\varepsilon$, are respectively the graphs of two $C^2$ functions in terms of $x'$. That is,
\begin{align*}
\Gamma_+^\varepsilon = \left\{ x_n = \frac{\varepsilon}{2}+h_1(x'),~|x'|<1\right\},\quad \Gamma_-^\varepsilon = \left\{ x_n = -\frac{\varepsilon}{2}+h_2(x'),~|x'|<1\right\},
\end{align*}
where $h_1$ and $h_2$ are relatively convex $C^{2}$ functions satisfying
\begin{equation}\label{fg_0}
h_1(0')=h_2(0')=0,\quad\nabla_{x'}h_1(0')=\nabla_{x'}h_2(0')=0,
\end{equation}
\begin{equation}\label{fg_1}
c_1 |x'|^2 \le h_1(x')-h_2(x')\quad\mbox{for}~~0<|x'|<1,
\end{equation}
and
\begin{equation}\label{def:c_2}
    \|h_1\|_{C^{2}}\leq c_2, \quad\|h_2\|_{C^{2}}\leq c_2,
\end{equation}
with some positive constants $c_1,c_2$. For $x_0 \in \widetilde\Omega^\varepsilon$, $0 < r\leq 1$, we denote
\begin{align*}
\Omega_{x_0,r}^\varepsilon:=\left\{(x',x_{n})\in \widetilde{\Omega}^\varepsilon~\big|~-\frac{\varepsilon}{2}+h_2(x')<x_{n}<\frac{\varepsilon}{2}+h_1(x'),~|x' - x_0'|<r\right\},
\end{align*}
and $\Omega_{r}^\varepsilon:=\Omega_{0,r}^\varepsilon$. We also denote
$$\Gamma_{+,r}^\varepsilon := \Gamma_+^\varepsilon \cap \overline{\Omega}_r^\varepsilon, \quad \Gamma_{-,r}^\varepsilon := \Gamma_-^\varepsilon \cap \overline{\Omega}_r^\varepsilon.$$
We use $B_r(x_0)$ to denote the open ball of radius $r$ centered at $x_0$ and we set
$$
B_r:=B_r(0),\quad
\Omega_r^\varepsilon(x_0):=\widetilde\Omega^\varepsilon\cap B_r(x_0).
$$
Throughout this paper, we denote
\begin{equation}\label{def:delta}
    \underline{\delta}(x):= \varepsilon + |x'|^2
\end{equation}
and
\begin{equation}\label{def_delta}
\delta(x):= \varepsilon + h_1(x') - h_2(x').
\end{equation}
By \eqref{fg_0}--\eqref{def:c_2}, it can be easily seen that
$$\min\{c_1,1\}\underline{\delta}(x)\leq \delta(x) \leq \max\{c_2,1\}\underline{\delta}(x), \quad \text{for} \quad x\in \Omega_1.$$

We denote
\begin{equation}\label{Theta}
\Theta(\varepsilon):= \left\{
\begin{aligned}
& \varepsilon^\frac{2p-n-1}{2(p-1)}, && p > \frac{n+1}{2},\\
& |\ln \varepsilon|^{-\frac{1}{p-1}},  && p = \frac{n+1}{2},\\
&1, && 1<p < \frac{n+1}{2},
\end{aligned}
\right.
\end{equation}
and
\begin{equation}\label{def:K}
K = \left\{
\begin{aligned}
&\frac{\det\big( D_{x'}^2 (h_1 - h_2)(0') \big)^{\frac{1}{2}} \Gamma(p-1)}{(2\pi)^{\frac{n-1}{2}} \Gamma\big(p- \frac{n+1}{2} \big)}, &&\mbox{when}~~p > \frac{n+1}{2},\\
&\frac{\det\big( D_{x'}^2 (h_1 - h_2)(0') \big)^{\frac{1}{2}} \Gamma\big(\frac{n-1}{2} \big)}{(2\pi)^{\frac{n-1}{2}}}, &&\mbox{when}~~p = \frac{n+1}{2},
\end{aligned}
\right.
\end{equation}
where $\Gamma(z):= \int_0^\infty t^{z-1} e^{-t}$ is the Gamma function defined for $z > 0$.

Our main result is the following asymptotic expansion of $Du_\varepsilon(x)$ for sufficiently small $\varepsilon$ and $x'$.
\begin{theorem}\label{thm:expansion}
Let $h_1$, $h_2$ be  $C^2$ functions satisfying \eqref{fg_0}-\eqref{def:c_2}, $p>1$, $n \ge 2$, $u_\varepsilon \in W^{1,p}(\Omega)$ be the solution of \eqref{equinfty}, $u_0$ be the minimizer of \eqref{minimizer} with $\varepsilon = 0$, $\cF$ be given in \eqref{flux}, $U_1$, $U_2$ be the constants in \eqref{touching_equation_2}$_2$, $\delta(x)$ be defined in \eqref{def_delta}, $\Theta(\varepsilon)$ be given in \eqref{Theta}, and $K$ be defined in \eqref{def:K}. Then there exist constants $\beta\in(0,1)$ depending only on $n$ and $p$, and $C_1, C_2>0$ depending only on $n$, $p$, $c_1$, and $c_2$, such that the following holds:
\begin{itemize}
\itemindent=-13pt
    \item[(i)] If $p\ge (n+1)/2$, for $\varepsilon \in(0,1)$ and $x\in \Omega_{1/4}^\varepsilon$, we have
\begin{equation}\label{eq:expansion1}
Du_\varepsilon(x)=\big(0',\delta(x)^{-1}\Theta(\varepsilon)(\sgn(\cF)(K|\cF|)^{1/(p-1)}+f_0(\varepsilon))\big)+\mathbf{f}_1(x,\varepsilon),
\end{equation}
where $f_0:\mathbb{R}\to \mathbb{R}$ is a function of $\varepsilon$ and $\mathbf{f}_1:\mathbb{R}^n\times \mathbb{R}\to \mathbb{R}^n$ is a function of $x$ and $\varepsilon$, such that
\begin{equation*}
    \lim_{\varepsilon\to0}f_0(\varepsilon)=0,
\end{equation*}
\begin{equation}\label{eq:thm-f1}
    |\mathbf{f}_1(x,\varepsilon)|\le C_1 \Big( {\delta}(x)^{\beta/2-1}\Theta(\varepsilon)(|K\cF|^{1/(p-1)}+|f_0(\varepsilon)|) + \|\varphi\|_{L^\infty(\partial\Omega)}e^{-\frac{C_2}{\sqrt\varepsilon + |x'|}} \Big).
\end{equation}
\item[(ii)] If $1<p< (n+1)/2$, for $\varepsilon \in(0,1)$ and $x\in \Omega_{1/4}^\varepsilon$, we have
\begin{equation*}
    D u_\varepsilon(x)=\big(0',\delta(x)^{-1}(U_1-U_2+g_0(\varepsilon))\big)+\mathbf{g}_1(x,\varepsilon),
\end{equation*}
where
$g_0:\mathbb{R}\to \mathbb{R}$ is a function of $\varepsilon$ and $\mathbf{g}_1:\mathbb{R}^n\times \mathbb{R}\to \mathbb{R}^n$ is a function of $x$ and $\varepsilon$, such that
\begin{equation*}
    \lim_{\varepsilon\to 0}g_0(\varepsilon)=0,
\end{equation*}
\begin{equation}\label{eq:thm-g1}
    |\mathbf{g}_1(x,\varepsilon)|\le C_1 \Big( {\delta}(x)^{\beta/2-1}(|U_1-U_2|+|g_0(\varepsilon)|)+ \|\varphi\|_{L^\infty(\partial\Omega)}e^{-\frac{C_2}{\sqrt\varepsilon + |x'|}} \Big).
\end{equation}
\end{itemize}
\end{theorem}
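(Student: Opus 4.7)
The plan is to decouple the expansion into two essentially independent parts: (A) an explicit asymptotic profile for $Du_\varepsilon$ in the narrow region whose leading coefficient is the voltage drop $U_1^\varepsilon-U_2^\varepsilon$, with a uniform pointwise remainder of order $\delta(x)^{\beta/2-1}|U_1^\varepsilon-U_2^\varepsilon|$; and (B) a sharp expansion of $U_1^\varepsilon-U_2^\varepsilon$ itself as $\varepsilon\to 0$. Combining (A) and (B) will yield the theorem in both cases.

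For (A), I would introduce the linear-in-$x_n$ interpolation
$$
\bar u_\varepsilon(x):=U_2^\varepsilon+\frac{x_n+\varepsilon/2-h_2(x')}{\delta(x)}(U_1^\varepsilon-U_2^\varepsilon)\qquad\text{in }\Omega_{1/2}^\varepsilon,
$$
extended smoothly to $\widetilde\Omega^\varepsilon$, so that the Dirichlet conditions on $\Gamma_\pm^\varepsilon$ are met exactly. A direct computation yields $\partial_n\bar u_\varepsilon=\delta^{-1}(U_1^\varepsilon-U_2^\varepsilon)$ and $|\partial_{x'}\bar u_\varepsilon|\lesssim |x'|\,\delta^{-1}|U_1^\varepsilon-U_2^\varepsilon|\lesssim\delta^{-1/2}|U_1^\varepsilon-U_2^\varepsilon|$, so $\bar u_\varepsilon$ is an approximate $p$-harmonic competitor. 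Using the minimization property \eqref{minimizer} against $\bar u_\varepsilon$ should produce an $L^p$-energy bound on $D(u_\varepsilon-\bar u_\varepsilon)$, which I would then iterate on shrinking anisotropic cylinders $\Omega_{|x_0'|/2^k}^\varepsilon(x_0)$ using a Caccioppoli-type inequality and $C^{1,\alpha}$ regularity for $p$-harmonic functions after the rescaling $y'=(x'-x_0')/|x_0'|$, $y_n=(x_n-x_{n,0})/\delta(x_0)$. This should deliver
$$
|Du_\varepsilon(x)-D\bar u_\varepsilon(x)|\lesssim\delta(x)^{\beta/2-1}|U_1^\varepsilon-U_2^\varepsilon|+\norm{\varphi}_{L^\infty(\partial\Omega)}\,e^{-C_2/(\sqrt\varepsilon+|x'|)}
$$
for some $\beta\in(0,1)$; the exponential tail records the decay of the influence of $\varphi$ through the thin neck and will be produced by comparison with explicit exponential barriers.

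For (B) in case (i), I would insert the profile from (A) into the flux identity $\int_{\partial\cD_1^\varepsilon}|Du_\varepsilon|^{p-2}Du_\varepsilon\cdot\nu=0$, splitting the integration into $\Gamma_{+,r}^\varepsilon$ and its complement for some fixed $r\in(0,1)$. The complement part converges as $\varepsilon\to 0$ to the flux over $\partial\cD_1^0\setminus\overline{B_r}$ of $u_0$, which tends to $\cF$ as $r\to 0$, while the narrow-region part reduces to leading order to
$$
-\sgn(U_1^\varepsilon-U_2^\varepsilon)\,|U_1^\varepsilon-U_2^\varepsilon|^{p-1}\int_{|x'|<r}\delta(x)^{1-p}\,dx'.
$$
Diagonalizing $M:=D^2_{x'}(h_1-h_2)(0')$, rescaling $x'=\sqrt\varepsilon\,y'$, and using the representation $(1+s)^{1-p}=\Gamma(p-1)^{-1}\int_0^\infty t^{p-2}e^{-t(1+s)}\,dt$ together with the Gaussian integral will evaluate the last integral to $\Theta(\varepsilon)^{1-p}/K\,(1+o(1))$ when $p>(n+1)/2$; in the borderline $p=(n+1)/2$ a spherical-coordinate calculation, tracking the logarithmically divergent piece from the annulus $\sqrt\varepsilon\lesssim|x'|\lesssim r$, produces the extra factor $|\log\varepsilon|$ and replaces the $\Gamma$ factor by $\Gamma((n-1)/2)$, matching the second line of \eqref{def:K}. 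Solving for $U_1^\varepsilon-U_2^\varepsilon$ will then give the leading term $\sgn(\cF)(K|\cF|)^{1/(p-1)}\Theta(\varepsilon)$ of the expansion, and $f_0(\varepsilon)$ will absorb the sub-leading corrections.

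For case (ii), Theorem \ref{thm_A1} gives $\cF=0$ in the limit problem and generically $U_1\ne U_2$, and $U_i^\varepsilon\to U_i$ will follow from a standard compactness argument for $p$-harmonic minimizers under $1<p<(n+1)/2$. Setting $g_0(\varepsilon):=(U_1^\varepsilon-U_2^\varepsilon)-(U_1-U_2)\to 0$ and inserting it into (A) will directly give \eqref{eq:thm-g1}. \textbf{The main obstacle will be (A)}: unlike the linear setting, the degenerate/singular $p$-Laplacian cannot be handled by direct subtraction and linear elliptic regularity, and the two different natural scales of $x'$ (order $\sqrt{\delta}$) and $x_n$ (order $\delta$) force a careful anisotropic rescaling, similar in spirit to (but with different boundary behaviour than) the machinery developed in \cite{DYZ23} for the insulated case. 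The dependence of the Hölder constants on the scale must be tracked precisely through this rescaling to extract the exponent $\beta/2$ and the exact form of the error in \eqref{eq:thm-f1} and \eqref{eq:thm-g1}.
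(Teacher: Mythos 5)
Your overall two-step strategy matches the paper's: first derive an expansion of $Du_\varepsilon$ whose leading coefficient is $U_1^\varepsilon-U_2^\varepsilon$ with a remainder gaining $\delta^{\beta/2}$ (their Proposition~\ref{prop_gradient_lowerbound_U1-U2}), then compute the asymptotics of $U_1^\varepsilon-U_2^\varepsilon$ by inserting this expansion into the flux identity (their Theorem~\ref{thm_U1-U2}) when $p\ge(n+1)/2$, or by the compactness argument of Theorem~\ref{thm_convergence} when $p<(n+1)/2$. Your part (B), including the Gamma-function evaluation via diagonalization of $D^2(h_1-h_2)(0')$ and the rescaling $x'=\sqrt\varepsilon\,y'$, is essentially identical to the paper's Lemma~\ref{lemma_calculus} and Theorem~\ref{thm_U1-U2}, and your handling of case (ii) matches the paper.

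Where you diverge, and where there is a genuine gap, is part (A). You propose to compare $u_\varepsilon$ globally with the explicit interpolant $\bar u_\varepsilon$, derive an $L^p$-energy bound on $D(u_\varepsilon-\bar u_\varepsilon)$ from minimality, and then upgrade to a pointwise estimate by iteration. The obstruction is that minimality alone ($I_p[u_\varepsilon]\le I_p[\bar u_\varepsilon]$) does not control $\|D(u_\varepsilon-\bar u_\varepsilon)\|_{L^p}$; to invoke strong monotonicity of the $p$-Laplacian you must integrate the equation against $u_\varepsilon-\bar u_\varepsilon$, which produces the defect term $\int\dv(|D\bar u_\varepsilon|^{p-2}D\bar u_\varepsilon)(u_\varepsilon-\bar u_\varepsilon)$. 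Since $|D\bar u_\varepsilon|\approx|U_1^\varepsilon-U_2^\varepsilon|/\delta$ and $\delta$ varies with $x'$, this defect is of order $|x'|\,|U_1^\varepsilon-U_2^\varepsilon|^{p-1}/\delta^p$, and it is far from clear that the resulting energy estimate is strong enough, nor that the iteration you sketch would extract precisely the exponent $\beta/2$ in the remainder. The paper sidesteps all of this: they first prove a barrier-based pointwise upper bound $|Du_\varepsilon|\lesssim|U_1^\varepsilon-U_2^\varepsilon|/\delta+\text{tail}$ (Proposition~\ref{prop_gradient_upperbound_U1-U2}), then a scale-invariant $C^\beta$ estimate $[Du_\varepsilon]_{C^\beta(\Omega_{x,\sqrt{\underline\delta}/4})}\lesssim\underline\delta^{-\beta/2}\|Du_\varepsilon\|_{L^\infty}$ via Campanato-type mean-oscillation estimates with local $p$-harmonic replacements in a flattened variable (Proposition~\ref{thm-1/2}), and finally use the elementary observation that, by the fundamental theorem of calculus and the mean value theorem, for each $x$ there is a point $y(x)=(x',\zeta(x'))$ on the same vertical fiber with $D_n u_\varepsilon(y(x))=(U_1^\varepsilon-U_2^\varepsilon)/\delta(x)$ exactly. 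The remainder is then $D_n u_\varepsilon(x)-D_n u_\varepsilon(y(x))$, directly controlled by the $C^\beta$ seminorm times $|x_n-\zeta(x')|^\beta\le\delta(x)^\beta$, combined with the pointwise bound. This is the missing idea in your proposal; comparing local $p$-harmonic replacements rather than the explicit global competitor is what makes the defect disappear, and the mean value trick is what converts the Hölder estimate into the claimed expansion.
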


 As a consequence of the asymptotic expansion in Theorem \ref{thm:expansion}, we provide a pointwise upper bound of $Du_\varepsilon$.
\begin{remark}
    \label{thm_upperbound}
Under the hypotheses of Theorem \ref{thm:expansion}, there exist constants $C_1, C_2>0$ depending only on $n$, $p$, $c_1$, and $c_2$, such that  for sufficiently small $\varepsilon>0$, and any $x\in \Omega_{1/4}^\varepsilon$, we have
\begin{equation}\label{grad_upperbound}
|D u_\varepsilon(x)| \le C_1 \|\varphi\|_{L^\infty(\partial \Omega)} \Big(\frac{\Theta(\varepsilon)}{\varepsilon + |x'|^2} + e^{-\frac{C_2}{\sqrt{\varepsilon} + |x'|}}\Big).
\end{equation}
In fact,
when $1<p < (n+1)/2$, \eqref{grad_upperbound} follows directly from Proposition \ref{prop_gradient_upperbound_U1-U2}.
When $p \ge (n+1)/2$, since $u_0 = U_0$ on $\overline{\cD_{1}^0}\cup\overline{\cD_{2}^0}$, we have $|\cF| \le C\|\varphi\|^{p-1}_{L^\infty(\partial \Omega)}$ and thus \eqref{grad_upperbound} follows from \eqref{eq:expansion1}. Indeed, one can see the boundedness of $Du_0$ on $\Gamma_{+, 1/2}^0$ by Lemma \ref{lemma_exp_decay}, and on $\partial D_1^0 \setminus \Gamma_{+, 1/2}^0$ by classical gradient estimates (see e.g. \cites{Lie}).
\end{remark}

 Another direct consequence of Theorem \ref{thm:expansion} is the following pointwise positive lower bound of $D_n u_\varepsilon$ near the origin, provided the coefficient of the leading order term in the asymptotic expansion is positive.

\begin{remark}
    \label{thm_lowerbound}
Under the hypotheses of Theorem \ref{thm:expansion}, if either
\begin{equation}\label{condition_1}
p \ge \frac{n+1}{2}~~\mbox{and}~~\cF > 0
\end{equation}
or
\begin{equation}\label{condition_2}
1 < p < \frac{n+1}{2}~~\mbox{and}~~U_1 > U_2
\end{equation}
holds, then there exist constants  $\kappa_1, \kappa_2\in(0,1/4)$, $\gamma>0$ depending only on $n$, $p$, $c_1$, $c_2$, $\|\varphi\|_{L^\infty(\partial\Omega)}$, $\cF$ (when $p \ge (n+1)/2$), and $U_1 - U_2$ (when $p < (n+1)/2$), such that for sufficiently small $\varepsilon>0$, and any
$x\in \Omega_{1/4}^\varepsilon$ satisfying
$$
\left\{\begin{aligned}
    &|x'|\leq |\ln \varepsilon|^{-\gamma},  &&\text{when} \quad p>\frac{n+1}{2},\\
    &|x'|\leq \kappa_1(\ln|\ln \varepsilon|)^{-1},  &&\text{when} \quad p=\frac{n+1}{2},\\
    &|x'|\leq \kappa_2,  &&\text{when} \quad 1<p<\frac{n+1}{2},
\end{aligned}
\right.
$$
we have
\begin{equation}\label{grad_lowerbound}
\left\{\begin{aligned}
    &D_n u_\varepsilon (x) \ge\frac{1}{2}\delta(x)^{-1}\Theta(\varepsilon)(K\cF)^{1/(p-1)},  &&\text{when} \quad p\ge\frac{n+1}{2},\\
    &D_n u_\varepsilon (x) \ge \frac{1}{2} \delta(x)^{-1}(U_1-U_2),  &&\text{when} \quad 1<p<\frac{n+1}{2}.
\end{aligned}
\right.
\end{equation}
Indeed, when $p\ge (n+1)/2$, \eqref{grad_lowerbound} follows directly from Theorem \ref{thm:expansion} by setting
    $$
    \left\{
    \begin{aligned}
    &|f_0(\varepsilon)|\leq \frac{1}{6}(K\cF)^{1/(p-1)},\\
    & C_1 \delta(x)^{\beta/2}\leq \frac{1}{7},\\
    & C_1\|\varphi\|_{L^\infty(\partial\Omega)}e^{-\frac{C_2}{\sqrt\varepsilon + |x'|}}\leq \frac{1}{6}\delta(x)^{-1}\Theta(\varepsilon)(K\cF)^{1/(p-1)},
    \end{aligned}
    \right.
    $$
and the case when $p\in(1,(n+1)/2)$ follows similarly.
\end{remark}

Next, we provide a concrete example whose coefficient of the leading order term in the asymptotic expansion is positive.
\begin{proposition}\label{thm_example}
Let $\Omega = B_5\subset \bR^n$, $\cD_1 = B_2(0',2)$, $\cD_2 = B_2(0', -2)$, $\varphi = x_n$, and $u_0$ be the minimizer of \eqref{minimizer} with $\varepsilon = 0$. Then either \eqref{condition_1} or \eqref{condition_2} is satisfied.
\end{proposition}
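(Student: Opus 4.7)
The plan is to exploit the reflection symmetry $\sigma(x',x_n):=(x',-x_n)$ of the data and reduce both cases to a single positive-flux statement established via the strong maximum principle and Hopf's lemma for the $p$-Laplacian. First observe that $B_5$, the inclusion set $\cD_1^0 \cup \cD_2^0$, and the datum $\varphi(x',x_n) = x_n$ are all antisymmetric under $\sigma$; the map $v \mapsto -v\circ\sigma$ preserves both the admissible set and the strictly convex functional $I_p$, so the unique minimizer $u_0$ satisfies $u_0 \circ \sigma = -u_0$. In Case 1 this forces $U_0=-U_0=0$; in Case 2 it forces $U_2=-U_1$, so the condition $U_1>U_2$ in \eqref{condition_2} reduces to $U_1>0$.

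I would then prove the key flux positivity via an auxiliary function. Let $u^{(0)}$ denote the $p$-harmonic function in $\widetilde{\Omega}^0$ with $u^{(0)}=0$ on $\partial \cD_1^0 \cup \partial\cD_2^0$ and $u^{(0)}=\varphi$ on $\partial\Omega$; by the same symmetry argument, $u^{(0)}$ is antisymmetric and hence vanishes on $\{x_n=0\}\cap B_5$. On the upper half region $\widetilde{\Omega}^0 \cap \{x_n>0\}$ the function $u^{(0)}$ is $p$-harmonic with non-negative, not-identically-zero boundary data, so the strong maximum principle gives $u^{(0)}>0$ in the interior, and Hopf's boundary point lemma at each point of $\partial\cD_1^0\setminus\{0\}$ yields $Du^{(0)}\cdot\nu>0$, where $\nu$ is the outer normal to $\cD_1^0$. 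Integration then gives $\cF^{(0)}:=\int_{\partial \cD_1^0} |Du^{(0)}|^{p-2} Du^{(0)} \cdot \nu > 0$; the two boundary arcs meeting at the origin share the common value $0$ for $u^{(0)}$, so boundary regularity in the cusp keeps $|Du^{(0)}|$ bounded near $x=0$ and this integral genuinely converges. In Case 1 the identity $U_0=0$ already gives $u_0=u^{(0)}$, so $\cF=\cF^{(0)}>0$ and \eqref{condition_1} holds.

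For Case 2 I would use a variational comparison. Parameterize competitors by $t\in\bR$: let $u_t$ minimize $I_p[v]$ subject to $v=t$ on $\cD_1^0$, $v=-t$ on $\cD_2^0$, $v=\varphi$ on $\partial\Omega$, and set $E(t):=I_p[u_t]$, which is strictly convex with $u_{U_1}=u_0$. Pick a fixed $w\in W^{1,p}_0(\Omega)$ with $w\equiv 1$ on $\cD_1^0$ and $w\equiv -1$ on $\cD_2^0$; such $w$ exists precisely because $p<(n+1)/2$, by the gap-integrability calculation that underlies Theorem \ref{thm_A1}. Since $u^{(0)}+tw$ is admissible at parameter $t$, a first-order expansion together with integration by parts (using $p$-harmonicity of $u^{(0)}$ and the antisymmetry identity $\int_{\partial \cD_2^0}|Du^{(0)}|^{p-2}Du^{(0)}\cdot\nu=-\cF^{(0)}$) gives
\[
E(t) \le I_p[u^{(0)}+tw] = E(0) - 2pt\,\cF^{(0)} + o(t).
\]
Since $\cF^{(0)}>0$, $E(t)<E(0)$ for all sufficiently small $t>0$, so by strict convexity $E$ attains its minimum at some $U_1>0$, establishing \eqref{condition_2}. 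The main technical point I expect to need to justify carefully is the differentiability of $t \mapsto I_p[u^{(0)}+tw]$ at $t=0$ with the expected linear coefficient — routine for $p\ge 2$ and handled by the standard $L^p$-convexity estimates for $1<p<2$ — together with the cusp regularity of $u^{(0)}$ at the origin that makes the surface integrals over $\partial\cD_1^0$ (and the Hopf argument above) rigorously well defined.
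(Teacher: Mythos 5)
Your proof is correct and reaches the stated conclusion. For the case $p \ge (n+1)/2$ your argument is essentially identical to the paper's: the odd symmetry $u_0\circ\sigma = -u_0$ forces $U_0 = 0$, and then the strong maximum principle plus Hopf's lemma on $\widetilde\Omega^0\cap\{x_n>0\}$ give $Du_0\cdot\nu>0$ on $\partial\cD_1^0\setminus\{0\}$ and hence $\cF>0$.

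For $1<p<(n+1)/2$ you take a genuinely different route. The paper argues by contradiction on $u_0$ itself: assuming $U_1\le U_2$ and using $U_2=-U_1$ forces $U_1\le 0$, so $u_0$ attains its minimum in $B_5^+\cap\widetilde\Omega^0$ on $\partial\cD_1^0$, Hopf gives $Du_0\cdot\nu>0$ there, and the resulting strict positivity of $\int_{\partial\cD_1^0}|Du_0|^{p-2}Du_0\cdot\nu$ contradicts the zero-flux condition in \eqref{touching_equation_2}. You instead introduce the auxiliary Dirichlet solution $u^{(0)}$ (whose flux $\cF^{(0)}>0$ is established the same way as Case 1), define the constrained energy $E(t)$, verify strict convexity and $u_{U_1}=u_0$, and compute the one-sided first-order perturbation $I_p[u^{(0)}+tw]=E(0)-2pt\,\cF^{(0)}+o(t)$ using the gap-integrable test function $w$ (whose existence is precisely the $p<(n+1)/2$ calculation from Theorem \ref{thm_A1}). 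Both are rigorous; the paper's contradiction argument is shorter and applies Hopf directly to $u_0$, while yours is longer but yields an energetic explanation of why $U_1>0$ (the energy genuinely decreases as the separated potentials leave zero) and only ever applies Hopf to the cleaner function $u^{(0)}$, which has identical boundary values on the two inclusions and hence enjoys the exponential gradient decay near the cusp from Lemma \ref{lemma_exp_decay}. The extra cost is the bookkeeping you already flagged: the $C^1$ dependence of $t\mapsto I_p[u^{(0)}+tw]$ (routine via dominated convergence for all $p>1$), the integration-by-parts identity with the antisymmetric flux, and the strict convexity of $E$.
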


Our proof of the main result Theorem \ref{thm:expansion} relies on the following $C^\beta$ bound of the gradient, which may be of independent interest.
\begin{proposition}
\label{thm-1/2}
Let $h_1$, $h_2$ be  $C^2$ functions satisfying \eqref{fg_0}-\eqref{def:c_2}, $p>1$, $n \ge 2$, $\varepsilon \in(0,1)$, and $u_\varepsilon \in W^{1,p}({\Omega})$ be a solution of \eqref{equinfty}. Then there exist constants $\beta\in(0,1)$ depending only on $n$ and $p$, and $C>0$ depending only on $n$, $p$, $c_1$, and $c_2$, such that for any $x\in \Omega_{1/4}$ and $\underline{\delta}(x)=\varepsilon+|x'|^2$, it holds that
\begin{equation}\label{gradient-1/2}
[D u_\varepsilon]_{C^\beta (\Omega_{x,\sqrt{\underline{\delta}(x)}/4})} \le  C \underline{\delta}(x)^{-\beta/2}\|D u_\varepsilon\|_{L^\infty(\Omega_{x,\sqrt{\underline{\delta}(x)}/2})}.
\end{equation}
\end{proposition}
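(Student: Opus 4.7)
The plan is an isotropic rescaling reducing the estimate to a uniform-in-scale $C^{1,\beta}$ bound for a $p$-harmonic function on a rescaled thin slab. Fix $x\in\Omega_{1/4}^\varepsilon$ and set $\rho:=\sqrt{\underline{\delta}(x)}$. If $M:=\|Du_\varepsilon\|_{L^\infty(\Omega_{x,\rho/2})}=0$ the estimate is trivial, so assume $M>0$ and define
\[
v(y):=\frac{u_\varepsilon(x+\rho y)-u_\varepsilon(x)}{\rho M}.
\]
Then $v$ weakly solves $\Delta_p v=0$ in the rescaled domain $(\widetilde{\Omega}^\varepsilon-x)/\rho$, satisfies $\|\nabla v\|_{L^\infty(\Omega'_{0,1/2})}\le 1$, and equals constants $\hat c_i$ with $|\hat c_i|\le 1$ on the rescaled portions of $\Gamma_i^\varepsilon$. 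The rescaled top and bottom boundaries are graphs $y_n=\tilde h_i(y'):=\rho^{-1}(\pm\varepsilon/2+h_i(x'+\rho y')-x_n)$; using $\|h_i\|_{C^2}\le c_2$ and $\rho\le 1$ one checks $\|\tilde h_i\|_{C^1(|y'|\le 1/2)}\le C$ and $\|\tilde h_i''\|_{L^\infty}\le c_2\rho$, so the rescaled boundaries are uniformly $C^{1,1}$-regular in $\rho$ and $O(\rho)$-close to two parallel planes.

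By the chain rule the target estimate is equivalent to the uniform-in-$\rho$ bound
\[
[\nabla v]_{C^\beta(\Omega'_{0,1/4})}\le C=C(n,p,c_1,c_2),
\]
since $[Du_\varepsilon]_{C^\beta(\Omega_{x,\rho/4})}=(M/\rho^\beta)[\nabla v]_{C^\beta(\Omega'_{0,1/4})}$. To prove this uniform bound I would combine the interior $C^{1,\beta}$ regularity for the $p$-Laplacian (DiBenedetto, Tolksdorf, Ural'ceva) applied in balls contained in the rescaled slab away from $\tilde\Gamma_{\pm}'$, with Lieberman's boundary $C^{1,\beta}$ regularity for $p$-harmonic functions having $C^{1,\alpha}$ Dirichlet data on a $C^{1,\alpha}$ boundary portion, applied in half-balls centered on $\tilde\Gamma_{\pm}'$ at a scale small enough that the half-ball does not reach the opposite boundary. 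A finite covering of $\Omega'_{0,1/4}$ by such interior balls and boundary half-balls, together with $\|\nabla v\|_{L^\infty}\le 1$ for pairs of points separated by more than the local scale, controls the Hölder seminorm throughout the rescaled region.

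The main obstacle is uniformity in $\rho$: naive local estimates yield constants of order $\rho^{-\beta}$ which degenerate as $\rho\to 0$. The resolution exploits the structure above: the rescaled boundaries are $O(\rho)$-close in $C^2$ to flat planes and $|\hat c_1-\hat c_2|\le C\rho$ (since $|U_1-U_2|\le\delta(x)M\lesssim\rho^2 M$), so the rescaled problem is a small perturbation of the flat-slab $p$-harmonic problem, whose unique solution with equal constant Dirichlet data on both faces is identically constant. To make this rigorous I would argue by contradiction: if the uniform bound failed, there would exist a sequence of rescaled $p$-harmonic functions $v_k$ on slabs of thicknesses $\rho_k\to 0$ with $\|\nabla v_k\|_{L^\infty}\le 1$, $|\hat c_{1,k}-\hat c_{2,k}|\le C\rho_k$, and $[\nabla v_k]_{C^\beta}\to\infty$; after a further anisotropic rescaling in $y_n$ by $\rho_k^{-1}$ that normalizes the slab to unit thickness, extracting a subsequential locally uniform limit (using compactness from the Lipschitz bound) and noting that the limit must be independent of the vertical variable with equal constant boundary traces on top and bottom, one sees that the limit is a constant function with zero gradient, contradicting the assumed divergence. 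Undoing the initial rescaling yields $[Du_\varepsilon]_{C^\beta(\Omega_{x,\rho/4})}\le CM\rho^{-\beta}=CM\underline{\delta}(x)^{-\beta/2}$, as claimed.
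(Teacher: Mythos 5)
Your proposal takes a genuinely different route from the paper. The paper proves Proposition~\ref{thm-1/2} by a quantitative Campanato-type iteration of mean oscillation estimates (Lemmas~\ref{lem:mean1}--\ref{lem:mean3} and Lemma~\ref{lem:iter}), using a vertical-stretching change of variables $\tilde\Lambda_{\bar x}$ (see \eqref{Lambda_T}) that maps the thin neck onto a slab of uniform thickness while keeping the equation a perturbation of the $p$-Laplacian, with coefficients close to the identity by \eqref{co-diff3}. You instead propose an isotropic rescaling by $\rho=\sqrt{\underline\delta(x)}$ and a compactness/contradiction argument. The reduction to the uniform-in-$\rho$ bound $[\nabla v]_{C^\beta(\Omega'_{0,1/4})}\le C$ is correctly computed, and your observation that $|\hat c_1-\hat c_2|\lesssim\rho$ is right and is indeed the key smallness. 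However, the contradiction step has a genuine gap.

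The problem is that ``the anisotropic blow-up limit $w_*$ is constant'' does not contradict $[\nabla v_k]_{C^\beta(\Omega'_{0,1/4})}\to\infty$. The H\"older seminorm is a supremum over pairs of points, and nothing in your setup forces it to be witnessed at the slab scale $\rho_k$: it could concentrate at arbitrarily small scales, which are invisible to a locally uniform (even $C^1_{\rm loc}$) limit of the normalized sequence. To close a blow-up argument of this type you would have to select, for each $k$, a pair of points achieving (a fixed fraction of) the seminorm, rescale so that their separation becomes order one, and show the renormalized limit simultaneously has nontrivial oscillation and must be affine/constant — i.e., the standard ``normalize at the worst scale'' device, which your write-up omits. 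There is a second difficulty: after the anisotropic rescaling $z_n=y_n/\rho_k$, $w_k$ no longer solves the $p$-Laplace equation but a degenerate anisotropic equation (the $D_{z_n}$ term picks up a factor $\rho_k^{-2}$), so you cannot simply invoke interior $C^{1,\alpha}$ estimates for $w_k$ to upgrade the Arzel\`a--Ascoli limit to control of gradients, and ``$D_{z_n}w_*=0$ plus equal constant traces'' only gives that the limit is constant, not that the seminorm of the prelimit tends to zero. The paper sidesteps both issues by staying quantitative throughout: the stretching $\tilde\Lambda$ keeps the PDE a bounded perturbation of $\Delta_p$ on a slab of fixed thickness, and the mean oscillation decay estimate \eqref{ineq:global2} combined with the dyadic iteration in the proof of Proposition~\ref{thm-1/2} yields the $\underline\delta(x)^{-\beta/2}$ rate directly, with explicit constants. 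If you want to salvage the compactness route you would need to add the ``normalization at the critical scale'' step and to cope with the degenerate limiting operator, which is a nontrivial further argument.
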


\begin{remark}
    It can be seen from the proof in Section \ref{sec:mean} that Proposition \ref{thm-1/2} holds as long as $u_\varepsilon\in W^{1,p}({\Omega}_1^\varepsilon)$ is a solution of
\begin{equation*}
\left\{
\begin{aligned}
-\dv (|D u_\varepsilon|^{p-2} D u_\varepsilon) &=0  &&\mbox{in }\Omega_1^\varepsilon,\\
u_\varepsilon &= U_1^\varepsilon &&\mbox{on}~\Gamma_{+}^\varepsilon,\\
u_\varepsilon &= U_2^\varepsilon &&\mbox{on}~\Gamma_{-}^\varepsilon,
\end{aligned}
\right.
\end{equation*}
for some arbitrary constants $U_1^\varepsilon$ and $U_2^\varepsilon$. Moreover, the same estimates also hold for any solution $u_\varepsilon\in W^{1,p}({\Omega}_1^\varepsilon)$ of
\begin{equation*}
\left\{
\begin{aligned}
-\dv (|D u_\varepsilon|^{p-2} D u_\varepsilon) &=0  &&\mbox{in }\Omega_1^\varepsilon,\\
\frac{\partial u_\varepsilon}{\partial \nu} &=0 &&\mbox{on}~\Gamma_{\pm}^\varepsilon,\\
\end{aligned}
\right.
\end{equation*}
which might be useful for obtaining sharper blow-up estimates for the insulated conductivity problem with $p$-Laplacian (see \cite{DYZ23}).
\end{remark}

We briefly describe the steps of proving Theorem \ref{thm:expansion}. First, we establish a pointwise upper bound of the gradient in terms of $U_1^\varepsilon - U_2^\varepsilon$ for arbitrary given $U_1^\varepsilon$ and $U_2^\varepsilon$ (Proposition \ref{prop_gradient_upperbound_U1-U2}). Then we use mean oscillation estimates to prove a $C^{1,\beta}$ estimate (Proposition \ref{thm-1/2}). Note that Proposition \ref{thm-1/2} implies a power gain of order $\delta^{\beta/2}$ for the oscillation of the gradient in the $x_n$ direction. Because of this power gain and Proposition \ref{prop_gradient_upperbound_U1-U2}, we then derive an asymptotic expansion of $Du_\varepsilon$ in terms of $U_1^\varepsilon - U_2^\varepsilon$ (Proposition \ref{prop_gradient_lowerbound_U1-U2}). When $p \ge (n+1)/2$, $U_1^\varepsilon - U_2^\varepsilon$ will converge to $0$ as $\varepsilon \to 0$. In this case, we use the flux conditions to derive the convergence rate for $U_1^\varepsilon - U_2^\varepsilon$ (Theorem \ref{thm_U1-U2}). When $p < (n+1)/2$, $U_1^\varepsilon - U_2^\varepsilon$ will converges to $U_1 - U_2$ (Theorem \ref{thm_convergence}).  Finally, Theorem \ref{thm:expansion} follows from putting all the ingredients above together.

We would like to point out that weaker versions of Proposition \ref{prop_gradient_lowerbound_U1-U2} were proved in \cites{GorNov,CirSci}. They derived an asymptotic expansion of $Du_\varepsilon$ only on the upper and lower boundaries $\Gamma_{\pm}^\varepsilon$ by constructing suitable barrier functions and using comparison principle. However,  in our opinion, it appears that there is a gap in their proofs. It is nontrivial to see that the normal derivative is bounded in the case when $U_1^\varepsilon - U_2^\varepsilon$ is small as lack of control of the oscillation of the solution in the $x'$ direction (see \cite{CirSci}*{p.6174} and \cite{GorNov}*{p.740}). This gap can be filled by Proposition \ref{prop_gradient_upperbound_U1-U2} of this paper. We would like to remark that our argument in Proposition \ref{prop_gradient_lowerbound_U1-U2} is more robust in the sense that  the proof does not rely on the fundamental solution of the $p$-Laplace equation or the maximum principle. In fact, the only place this paper involves the maximum principle is Proposition \ref{prop_gradient_upperbound_U1-U2}. See also Remark \ref{remark-Ui}. If one can give an alternative proof of \eqref{gradient_upperbound_U1-U2-new} in Proposition \ref{prop_gradient_upperbound_U1-U2} without using the maximum principle, then our results can be extended to nonlinear systems of $p$-Laplace type.

The rest of the paper is organized as follows: In Section \ref{Sec_prelim}, we provide some preliminary estimates and results. In Section \ref{sec:mean}, we use mean oscillation estimates to prove Proposition \ref{thm-1/2}. A convergence rate of $U_1^\varepsilon - U_2^\varepsilon$ when $p\ge (n+1)/2$ is provided in Section \ref{sec:U1-U2}. Finally, the proofs of Theorem \ref{thm:expansion} and Proposition \ref{thm_example} are given in Section \ref{sec:proofs}.

\section{Preliminaries}\label{Sec_prelim}
In this section, we provide some preliminary results.

\begin{lemma}\label{lemma_exp_decay}
Let $h_1$, $h_2$ be  $C^2$ functions satisfying \eqref{fg_0}-\eqref{def:c_2}, $p>1$, $n \ge 2$, $\varepsilon \in[0,1)$, and $v \in W^{1,p}(\Omega_1^\varepsilon)$ be a solution of
\begin{equation}\label{p_laplace_narrow_0boundary}
\left\{
\begin{aligned}
-\dv (|D v|^{p-2} D v) &=0  &&\mbox{in }\Omega_1^\varepsilon,\\
v &= 0 &&\mbox{on}~\Gamma_{\pm}^\varepsilon.
\end{aligned}
\right.
\end{equation}
Then there exist constants $C_1, C_2>0$ depending only on $n$, $p$, $c_1$, and $c_2$, such that
\begin{equation}\label{eq:exp_decay}
|v(x)| + |Dv(x)| \le C_1 e^{-\frac{C_2}{\sqrt{\varepsilon} + |x'|}} \|v\|_{L^p(\Omega_1^\varepsilon\setminus \Omega_{1/2}^\varepsilon)} \quad \mbox{for}~~x \in \overline{\Omega_{1/2}^\varepsilon}.
\end{equation}
\end{lemma}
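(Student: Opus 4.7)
The plan is to prove a Saint-Venant type exponential decay estimate for the energy $\Phi(r):=\int_{\Omega_r^\varepsilon}|Dv|^p$ by iterating a combination of Caccioppoli and Poincar\'e inequalities adapted to the narrow region, and then to convert this energy decay into the pointwise bound via a rescaled $C^{1,\alpha}$ estimate for $p$-harmonic functions. For the first ingredient, I would test \eqref{p_laplace_narrow_0boundary} with $v\eta^p$ for a cutoff $\eta$ supported in $\Omega_s^\varepsilon$ and equal to $1$ on $\Omega_r^\varepsilon$, $0<r<s\le 1$. Young's inequality yields the Caccioppoli bound $\Phi(r)\le C(s-r)^{-p}\int_{\Omega_s^\varepsilon\setminus\Omega_r^\varepsilon}|v|^p$, while the 1D Poincar\'e inequality on each vertical slice (using that $v$ vanishes on both $\Gamma_+^\varepsilon$ and $\Gamma_-^\varepsilon$ and the slice has length at most $C(\varepsilon+s^2)$) gives $\int_{\Omega_s^\varepsilon\setminus\Omega_r^\varepsilon}|v|^p \le C(\varepsilon+s^2)^p\int_{\Omega_s^\varepsilon\setminus\Omega_r^\varepsilon}|Dv|^p$. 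Combined, these produce the key recursion
$$\Phi(r)\le \frac{C(\varepsilon+s^2)^p}{(s-r)^p}\bigl[\Phi(s)-\Phi(r)\bigr].$$

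Next, I would choose $s-r=K(\varepsilon+s^2)$ with $K$ sufficiently large so that the recursion becomes $\Phi(r)\le\mu\Phi(s)$ for some $\mu\in(0,1)$ depending only on $n,p,c_1,c_2$. Iterating from $r_0=1/2$ by $r_{k+1}=r_k-K(\varepsilon+r_k^2)$: in the regime $r_k\ge\sqrt\varepsilon$ the recursion behaves like $-dr/dk\sim Kr^2$, so $r_k\sim 1/(Kk)$ and reaching $r_*\ge\sqrt\varepsilon$ takes $\sim 1/(Kr_*)$ steps; in the regime $r_k\le\sqrt\varepsilon$, $r_k$ drops by $\sim K\varepsilon$ per step, so $\sim 1/(K\sqrt\varepsilon)$ further steps suffice. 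Therefore $k(r_*)\ge C/(\sqrt\varepsilon+r_*)$, yielding
$$\Phi(r_*)\le\mu^{k(r_*)}\Phi(1/2)\le C e^{-C_2/(\sqrt\varepsilon+r_*)}\Phi(1/2),$$
and one more application of Caccioppoli with $r=1/2,s=1$ bounds $\Phi(1/2)\le C\|v\|_{L^p(\Omega_1^\varepsilon\setminus\Omega_{1/2}^\varepsilon)}^p$.

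Finally, to obtain pointwise bounds, for $x\in\overline{\Omega_{1/2}^\varepsilon}$ I would isolate a neighborhood of $x$ inside $\Omega_{|x'|+c\sqrt{\delta(x)}}^\varepsilon$ and rescale by the natural scale $\delta(x)$. Since $h_1,h_2\in C^2$ and the thickness of the domain at $x'$ is $\sim\delta(x)$ while the relevant horizontal extent is $\sqrt{\delta(x)}$, after rescaling the boundary pieces $\Gamma_\pm^\varepsilon$ become locally flat with uniformly bounded $C^2$ geometry, and the rescaled $\hat v$ solves the same $p$-Laplace equation on a unit-size domain with zero Dirichlet data on the upper and lower portions of the boundary. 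Applying boundary $C^{1,\alpha}$ regularity of $p$-harmonic functions (Tolksdorf, Lieberman) to $\hat v$, unscaling, and bounding the resulting $L^p$ norm by Poincar\'e together with the exponential energy decay from the previous step evaluated at $r_*=|x'|/2$ (absorbing the polynomial-in-$\delta(x)^{-1}$ prefactors into the exponential by slightly decreasing $C_2$) gives the desired estimate \eqref{eq:exp_decay}. The main obstacle I anticipate is ensuring that the $C^{1,\alpha}$ constants produced by the rescaling are uniform in both $\varepsilon$ and $x$; this reduces to choosing the rescaling factor to match the local thickness $\delta(x)$ and invoking the $C^2$ bound \eqref{def:c_2} to control the rescaled geometry independently of $\varepsilon$.
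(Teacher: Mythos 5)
Your proposal follows essentially the same approach as the paper: a Caccioppoli estimate tested with $v\eta^p$, a $1$D Poincar\'e inequality in the $x_n$ direction using the zero Dirichlet data and the $O(\varepsilon+s^2)$ slice thickness, iteration of the resulting recursion to obtain $\mu^{1/(\sqrt\varepsilon+|x'|)}$ energy decay, and finally a rescaled local $C^{1,\alpha}$ estimate for the $p$-Laplace equation to convert to a pointwise bound. The only cosmetic difference is that the paper fixes the step size to $r^2$ and restricts to $r>\sqrt\varepsilon$ so that the ratio $(\varepsilon+s^2)/(s-t)$ is automatically $O(1)$, whereas you use a variable step $K(\varepsilon+s^2)$ and treat the regime $r_k<\sqrt\varepsilon$ separately --- both give the same exponent.
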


\begin{proof}
The proof of this lemma essentially follows that of \cite{BLLY}*{Theorem 1.1}, with some modification. For simplicity, we omit the superscript $\varepsilon$ in the proof.
Without loss of generality, we may assume $\varepsilon\in[0, 1/256)$ and $|x'|<1/16$ since otherwise \eqref{eq:exp_decay} follows from classical estimates for the $p$-Laplace equation (see e.g. \cite{Lie}). For any $0<t<s<1$, let $\eta = \eta(x')$ be a cutoff function such that $\eta = 1$ in $\Omega_t$, $\eta = 0$ in $\Omega_1 \setminus \Omega_s$, and $|D \eta| \le C (s-t)^{-1}$. Multiplying $v\eta^p$ on both sides of \eqref{p_laplace_narrow_0boundary} and integrating by parts, we have
$$
\int_{\Omega_1} |D v|^p \eta^p + p |D v|^{p-2} D v \cdot D\eta v \eta^{p-1} = 0.
$$
By Young's inequality,
$$
\int_{\Omega_t} |D v|^p \le \frac{C}{(s-t)^p} \int_{\Omega_s \setminus \Omega_t} |v|^p.
$$
Since $v = 0$ on $\Gamma_-$, by the Poincar\'e inequality in the $x_n$ direction, we have
$$
\int_{\Omega_s \setminus \Omega_t} |v|^p \le C(\varepsilon+s^2)^p \int_{\Omega_s \setminus \Omega_t} |D u|^p.
$$
Therefore,
\begin{equation} \label{iteration}
\int_{\Omega_t} |D v|^p \le C^* \left(\frac{\varepsilon+s^2}{s-t} \right)^p \int_{\Omega_s \setminus \Omega_t} |D v|^p.
\end{equation}
Let $t_0 = r \in (\sqrt{\varepsilon},1/2)$ and $t_j = (1-jr)r$ for $j \in \bN$ such that $j \le 1/r$. Taking $s = t_j, t = t_{j+1}$ in \eqref{iteration}, we have
$$
\int_{\Omega_{t_{j+1}}} |D v|^p \le 2^pC^* \int_{\Omega_{t_{j}} \setminus \Omega_{t_{j+1}}} |D v|^p.
$$
Adding both sides by $2^pC^*\int_{\Omega_{t_{j+1}}} |D v|^p$ and dividing both sides by $1+ 2^pC^*$, we have
$$
\int_{\Omega_{t_{j+1}}} |D v|^p \le \frac{2^pC^*}{1+2^pC^*} \int_{\Omega_{t_{j}}} |D v|^p.
$$
Let $k = \lfloor \frac{1}{2r} \rfloor$ and iterate the above inequality $k$ times. We have
\begin{equation} \label{grad_v_Lp}
\int_{\Omega_{r/2}}|D v|^p \le \left( \frac{2^pC^*}{1+2^pC^*} \right)^k \int_{\Omega_{r}}|D v|^p \le C\mu^{\frac{1}{r}} \int_{\Omega_{1}\setminus \Omega_{1/2}}|v|^p,
\end{equation}
where $\mu \in (0,1)$ and $C$ are constants depending only on $n$, $p$, $c_1$, and $c_2$. Now we take $r = 4(\sqrt{\varepsilon} + |x'|)$ and \eqref{eq:exp_decay} follows from classical estimates for the $p$-Laplace equation in $\Omega_{x', {\varepsilon} + |x'|^2}$ and \eqref{grad_v_Lp}.
\end{proof}

Next, we derive a pointwise upper bound of the gradient in terms of $U_1^\varepsilon - U_2^\varepsilon$.

\begin{proposition}\label{prop_gradient_upperbound_U1-U2}
Let $h_1$, $h_2$ be  $C^2$ functions satisfying \eqref{fg_0}-\eqref{def:c_2}, $p>1$, $n \ge 2$, $\varepsilon \in[0,1)$, $U_1^\varepsilon$, $U_2^\varepsilon$ be arbitrary constants, and $u_\varepsilon \in W^{1,p}(\Omega_1^\varepsilon)$ be a solution of
\begin{equation*}
\left\{
\begin{aligned}
-\dv (|D u_\varepsilon|^{p-2} D u_\varepsilon) &=0  &&\mbox{in }\Omega_1^\varepsilon,\\
u_\varepsilon &= U_1^\varepsilon &&\mbox{on}~\Gamma_{+}^\varepsilon,\\
u_\varepsilon &= U_2^\varepsilon &&\mbox{on}~\Gamma_{-}^\varepsilon.
\end{aligned}
\right.
\end{equation*}
Then there exist constants $C_1, C_2>0$ depending only on $n$, $p$, $c_1$, and $c_2$, such that for $x\in\Omega_{1/4}^\varepsilon$, it holds that
\begin{equation}\label{gradient_upperbound_U1-U2}
|Du_\varepsilon(x)| \le C_1 \Big(\frac{|U_1^\varepsilon - U_2^\varepsilon|}{\varepsilon + |x'|^2} + \|u_\varepsilon\|_{L^\infty(\Omega_1^\varepsilon \setminus \Omega_{1/2}^\varepsilon)} e^{-\frac{C_2}{\sqrt\varepsilon + |x'|}}\Big).
\end{equation}
Moreover, if $\varepsilon\in (0,1)$, $u_\varepsilon\in W^{1,p}(\Omega)$ is the solution to \eqref{equinfty} and $U_1^\varepsilon$, and $U_2^\varepsilon$ are the same constants in \eqref{equinfty}, we have
\begin{equation}\label{U1U2_upperbound}
\inf_{\partial \Omega} \varphi \le U_1^\varepsilon,\, U_2^\varepsilon \le \sup_{\partial \Omega} \varphi,
\end{equation}
and for $x\in\Omega_{1/4}^\varepsilon$,
\begin{equation}\label{gradient_upperbound_U1-U2-new}
|Du_\varepsilon(x)| \le C_1 \Big(\frac{|U_1^\varepsilon - U_2^\varepsilon|}{\varepsilon + |x'|^2} + \|\varphi\|_{L^\infty(\partial \Omega)} e^{-\frac{C_2}{\sqrt\varepsilon + |x'|}}\Big).
\end{equation}
\end{proposition}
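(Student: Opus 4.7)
The plan is to sandwich $v := u_\varepsilon - (U_1^\varepsilon + U_2^\varepsilon)/2$ between two explicit $p$-harmonic barriers built on top of Lemma \ref{lemma_exp_decay}, and then convert the resulting pointwise $L^\infty$ estimate into the desired pointwise gradient bound via an isotropic rescaling together with the classical interior and boundary $C^{1,\alpha}$ regularity theory for $p$-harmonic functions.

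First, since the $p$-Laplace equation is invariant under addition of constants, setting $M := (U_1^\varepsilon+U_2^\varepsilon)/2$, $A := (U_1^\varepsilon-U_2^\varepsilon)/2$, and $v := u_\varepsilon - M$ yields a $p$-harmonic function $v$ in $\Omega_1^\varepsilon$ with $v = \pm A$ on $\Gamma_\pm^\varepsilon$ and $Dv \equiv Du_\varepsilon$. I will introduce two nonnegative auxiliary $p$-harmonic functions $B_\pm \in W^{1,p}(\Omega_1^\varepsilon)$ with zero Dirichlet trace on $\Gamma_+^\varepsilon\cup\Gamma_-^\varepsilon$ and side data $B_+ := (v-|A|)_+$, $B_- := (-v-|A|)_+$ on $\{|x'|=1\}\cap\overline{\Omega_1^\varepsilon}$, whose nonnegativity follows from the weak maximum principle. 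Since constants are annihilated by the $p$-Laplace operator, $\phi_+ := |A| + B_+$ and $\phi_- := -|A| - B_-$ are likewise $p$-harmonic, and a short case check on $\Gamma_\pm^\varepsilon$ and on the lateral piece $\{|x'|=1\}$ gives $\phi_- \le v \le \phi_+$ on all of $\partial\Omega_1^\varepsilon$. The standard comparison principle for $W^{1,p}$ solutions of the $p$-Laplace equation then propagates this sandwich to the interior. Feeding $B_\pm$ into Lemma \ref{lemma_exp_decay}, whose zero-boundary hypothesis they satisfy, and controlling their side $L^\infty$-norms by $\|u_\varepsilon\|_{L^\infty(\Omega_1^\varepsilon\setminus\Omega_{1/2}^\varepsilon)}$ (using continuity of $u_\varepsilon$ up to $\Gamma_\pm^\varepsilon$ to bound $|U_i^\varepsilon|$, $|M|$, and $|A|$ by that outer norm) then yields
$$|v(x)| \le |A| + C e^{-C_2/(\sqrt\varepsilon+|x'|)}\|u_\varepsilon\|_{L^\infty(\Omega_1^\varepsilon\setminus\Omega_{1/2}^\varepsilon)},\qquad x\in\Omega_{1/2}^\varepsilon.$$

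To extract a pointwise gradient estimate I fix $x_0\in\Omega_{1/4}^\varepsilon$, set $\delta_0:=\varepsilon+|x_0'|^2\sim\delta(x_0)$, and rescale isotropically by $\delta_0$ about $z_0:=(x_0',(h_1(x_0')+h_2(x_0'))/2)$; that is, $y := (x-z_0)/\delta_0$ and $\tilde v(y) := v(z_0+\delta_0 y)$. The $p$-Laplace equation is preserved, and Taylor expansion of $h_\pm$ at $x_0'$, using $\|h_i\|_{C^2}\le c_2$ and $c_1|x'|^2\le h_1(x')-h_2(x')$, shows that the rescaled top and bottom boundaries are $C^2$ graphs with uniformly bounded slope and curvature, separated by a uniformly positive gap, on which $\tilde v = \pm A$. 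The classical interior and boundary $C^{1,\alpha}$ estimates for $p$-harmonic functions with constant Dirichlet data on a smooth boundary (DiBenedetto, Tolksdorf, Lieberman) then give $|D\tilde v(\tilde x_0)| \le C\|\tilde v\|_{L^\infty}$ in a unit neighborhood of $\tilde x_0$, with $C$ depending only on $n$, $p$, $c_1$, $c_2$. Rescaling back and combining with the previous $L^\infty$ bound produces
$$|Du_\varepsilon(x_0)| = \delta_0^{-1}|D\tilde v(\tilde x_0)| \le \frac{C}{\delta_0}\Bigl(|A| + e^{-C_2/(\sqrt\varepsilon+|x_0'|)}\|u_\varepsilon\|_{L^\infty(\Omega_1^\varepsilon\setminus\Omega_{1/2}^\varepsilon)}\Bigr).$$
The polynomial factor $\delta_0^{-1}$ in front of the exponential is absorbed into the exponential at the cost of halving $C_2$, since $(\sqrt\varepsilon+|x_0'|)^{-2}e^{-C_2/(\sqrt\varepsilon+|x_0'|)} \le C e^{-C_2/(2(\sqrt\varepsilon+|x_0'|))}$, yielding exactly \eqref{gradient_upperbound_U1-U2}.

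Finally, when $u_\varepsilon$ solves the full problem \eqref{equinfty}, the truncation $\max(\min(u_\varepsilon,\sup_{\partial\Omega}\varphi),\inf_{\partial\Omega}\varphi)$ belongs to $\sA^\varepsilon$ and has energy no larger than $I_p[u_\varepsilon]$, so uniqueness of the minimizer of \eqref{minimizer} forces $\inf_{\partial\Omega}\varphi\le u_\varepsilon\le\sup_{\partial\Omega}\varphi$ in $\widetilde\Omega^\varepsilon$. This simultaneously gives \eqref{U1U2_upperbound} and $\|u_\varepsilon\|_{L^\infty(\Omega_1^\varepsilon\setminus\Omega_{1/2}^\varepsilon)}\le\|\varphi\|_{L^\infty(\partial\Omega)}$, from which \eqref{gradient_upperbound_U1-U2-new} follows directly. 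The main technical obstacle I anticipate is the rescaling step: one must verify that the rescaled geometry has $C^2$-norm depending only on $c_1$ and $c_2$ uniformly in both regimes $|x_0'|\lesssim\sqrt\varepsilon$ (where $\delta_0\sim\varepsilon$ is coarser than the natural horizontal scale $\sqrt\varepsilon$ of the gap) and $|x_0'|\gtrsim\sqrt\varepsilon$ (where $\delta_0\sim|x_0'|^2$ is finer), so that the DiBenedetto--Tolksdorf--Lieberman constants and the exponential decay constants of Lemma \ref{lemma_exp_decay} combine into universal constants in \eqref{gradient_upperbound_U1-U2}.
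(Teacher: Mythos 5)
Your barrier-plus-rescaling argument for \eqref{gradient_upperbound_U1-U2} is essentially the paper's proof. The paper introduces two auxiliary $p$-harmonic functions on $\Omega_1^\varepsilon$ with zero data on $\Gamma_\pm^\varepsilon$ and lateral data $u_\varepsilon-U_1^\varepsilon$, respectively $u_\varepsilon-U_2^\varepsilon$, invokes Lemma \ref{lemma_exp_decay} together with the comparison principle to bound $\osc_{\Omega^\varepsilon_{x_0,\underline{\delta}(x_0)/8}}u_\varepsilon$ by $|U_1^\varepsilon-U_2^\varepsilon|+C_1\|u_\varepsilon\|_{L^\infty(\Omega_1^\varepsilon\setminus\Omega_{1/2}^\varepsilon)}e^{-C_2/(\sqrt\varepsilon+|x_0'|)}$, and then cites the classical interior and boundary $C^{1,\alpha}$ theory for the $p$-Laplacian. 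Your shift to the midpoint $M$, the positive-part lateral data $(\pm v-|A|)_+$ for $B_\pm$, and the explicit isotropic $\delta_0$-rescaling are cosmetic variants of the same decomposition and of what the paper's citation of classical estimates implicitly amounts to; your checks on the rescaled geometry (uniform $C^2$ bounds, exponential absorbing the $\delta_0^{-1}$ factor) are correct.

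The one place you genuinely diverge is \eqref{U1U2_upperbound}. The paper argues by contradiction using the strong maximum principle and the Hopf lemma: if $\max(U_1^\varepsilon,U_2^\varepsilon)>\sup_{\partial\Omega}\varphi$ then $Du_\varepsilon\cdot\nu>0$ on the corresponding $\partial\cD_i^\varepsilon$, contradicting the zero-flux condition \eqref{equinfty}$_3$. Your truncation argument is a clean and more elementary alternative: clipping $u_\varepsilon$ to $[\inf_{\partial\Omega}\varphi,\sup_{\partial\Omega}\varphi]$ stays in $\sA^\varepsilon$ and pointwise does not increase $|\nabla u_\varepsilon|^p$, so strict convexity of $I_p$ and uniqueness of the minimizer force $u_\varepsilon$ to already lie in that range, which gives \eqref{U1U2_upperbound} and $\|u_\varepsilon\|_{L^\infty(\widetilde\Omega^\varepsilon)}\le\|\varphi\|_{L^\infty(\partial\Omega)}$ in one stroke, hence \eqref{gradient_upperbound_U1-U2-new}. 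This avoids the Hopf lemma, though it still exploits the scalar structure via pointwise truncation, so it does not by itself remove the obstruction to $p$-Laplace systems noted in Remark \ref{remark-Ui}.
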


\begin{proof}
We first give the proof of \eqref{gradient_upperbound_U1-U2}. Take a point $x_0 \in \Omega_{1/4}^\varepsilon$. In order to estimate the gradient at $x_0$, we first estimate the oscillation of $u_\varepsilon$ in $\Omega_{x_0, \underline{\delta}(x_0)/8}^\varepsilon$, where $\underline{\delta}(x_0)= \varepsilon + |x_0'|^2$. Without loss of generality, we may assume that $U_1^\varepsilon \ge U_2^\varepsilon$. Let $v$ be the solution to
$$
\left\{
\begin{aligned}
-\dv (|D v|^{p-2} D v) &= 0  &&\mbox{in }\Omega_1^\varepsilon,\\
v &= 0 &&\mbox{on}~\Gamma_{\pm}^\varepsilon,\\
v &= u_\varepsilon - U_1^\varepsilon &&\mbox{on}~~\partial\Omega_1^\varepsilon\cap\{x\in \mathbb{R}^n: |x'| = 1 \}.
\end{aligned}
\right.
$$
By Lemma \ref{lemma_exp_decay},
$$
|v(x)| \le C_1 e^{-\frac{C_2}{\sqrt{\varepsilon} + |x'|}} \|u_{\varepsilon}\|_{L^\infty( \Omega_1^\varepsilon \setminus \Omega_{1/2}^\varepsilon)} \quad \mbox{for}~~x \in \Omega_{1/2}^\varepsilon.
$$
Since $v \ge u_\varepsilon - U_1^\varepsilon$ on $\partial \Omega_1^\varepsilon$, by the comparison principle, we have
$$
u_\varepsilon(x) - U_1^\varepsilon \le v(x) \quad\mbox{in}~~\Omega_1^\varepsilon.
$$
Similarly, let $w$ be the solution to
$$
\left\{
\begin{aligned}
-\dv (|D w|^{p-2} D w) &= 0  &&\mbox{in }\Omega_1^\varepsilon,\\
w &= 0 &&\mbox{on}~\Gamma_{\pm}^\varepsilon,\\
w &= u_\varepsilon - U_2^\varepsilon &&\mbox{on}~~\partial\Omega_1^\varepsilon\cap\{x\in \mathbb{R}^n: |x'| = 1 \}.
\end{aligned}
\right.
$$
We have
$$
|w(x)| \le C_1 e^{-\frac{C_2}{\sqrt{\varepsilon} + |x'|}} \|u_{\varepsilon}\|_{L^\infty(\Omega_1^\varepsilon \setminus \Omega_{1/2}^\varepsilon)} \quad \mbox{for}~~x \in \Omega_{1/2}^\varepsilon,
$$
and
$$
u_\varepsilon(x) - U_2^\varepsilon \ge w(x) \quad\mbox{in}~~\Omega_1^\varepsilon.
$$
Therefore,
$$
\underset{\Omega_{x_0,\underline{\delta}(x_0)/8}^\varepsilon}{\osc} u_\varepsilon\le |U_1^\varepsilon - U_2^\varepsilon| + C_1\|u_\varepsilon\|_{L^\infty(\Omega_1^\varepsilon \setminus \Omega_{1/2}^\varepsilon)} e^{-\frac{C_2}{\sqrt\varepsilon + |x_0'|}}.
$$
Then the gradient estimate \eqref{gradient_upperbound_U1-U2} follows from classical boundary and interior estimates for the $p$-Laplace equation (see e.g. \cite{Lie}).

Next we prove \eqref{U1U2_upperbound}. Indeed, if {$U_i^\varepsilon = \max\{U_1^\varepsilon,U_2^\varepsilon \} > \sup_{\partial \Omega} \varphi$}, by the maximum principle and the Hopf lemma (see \cite{Vaz}*{Theorem 5}), $Du_\varepsilon \cdot \nu>0$ on $\partial \cD_i^\varepsilon$, which violates \eqref{equinfty}$_3$.

Finally, \eqref{gradient_upperbound_U1-U2-new} follows directly from \eqref{gradient_upperbound_U1-U2}, \eqref{U1U2_upperbound}, and the maximum principle.
\end{proof}
\begin{remark}\label{remark-Ui}
    For systems of $p$-Laplace type, instead of \eqref{U1U2_upperbound}, one can still show that
$$|U_i^\varepsilon| \leq C \|\varphi\|_{C^1(\partial \Omega)}, \quad i=1,2$$
holds for some $\varepsilon$-independent constant $C$, by using classical boundary and interior gradient estimates away from the neck region $\Omega_{1/2}^\varepsilon$. However, it is not clear to us whether \eqref{gradient_upperbound_U1-U2-new} (or a weaker version of it) is still true.
\end{remark}

In the following, we justify the equivalence between the minimizing problem \eqref{minimizer} with $\varepsilon = 0$ and the equations \eqref{touching_equation_1}-\eqref{touching_equation_2}.

\begin{theorem}\label{thm_A1}
$u_0$ is the minimizer of \eqref{minimizer} with $\varepsilon = 0$ if and only if $u_0\in W^{1,p}(\Omega)$ satisfies \eqref{touching_equation_1} when $p \ge (n+1)/2$ and satisfies \eqref{touching_equation_2} when $p < (n+1)/2$.
\end{theorem}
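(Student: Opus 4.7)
The plan is to prove both directions of the equivalence by standard variational arguments; the only nontrivial ingredient is a capacity-type estimate across the neck that produces the dichotomy at $p=(n+1)/2$. Equations \eqref{touching_equation_1} and \eqref{touching_equation_2} differ only in whether every admissible $v\in\sA^0$ is forced to take a common value on $\overline{\cD_1^0}$ and $\overline{\cD_2^0}$, and correspondingly whether one can vary the two constants $U_1,U_2$ independently.

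The key capacity estimate goes as follows. For $v\in\sA^0$, the condition $\nabla v\equiv 0$ on the connected open set $\cD_i^0$ forces $v\equiv U_i^v$ a.e.\ on $\cD_i^0$ for some constant $U_i^v$ equal to the trace of $v$ on $\partial\cD_i^0$. By the ACL characterization of $W^{1,p}(\Omega)$, for a.e.\ $x'$ with $|x'|<1$ the slice $x_n\mapsto v(x',x_n)$ is absolutely continuous with boundary values $U_1^v$ at $x_n=h_1(x')$ and $U_2^v$ at $x_n=h_2(x')$, so H\"older's inequality gives
\[|U_1^v-U_2^v|^p(h_1-h_2)^{1-p}(x')\le \int_{h_2(x')}^{h_1(x')}|\partial_{x_n}v|^p\,dx_n.\]
Using \eqref{fg_0}--\eqref{def:c_2} to obtain $c_1|x'|^2\le h_1-h_2\le c_2|x'|^2$ and integrating over $|x'|<r$ yields
\[|U_1^v-U_2^v|^p\int_0^r s^{n-2p}\,ds\lesssim \|Dv\|_{L^p(\Omega)}^p<\infty,\]
whose $s$-integral diverges iff $p\ge(n+1)/2$. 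Hence for $p\ge(n+1)/2$ every $v\in\sA^0$ satisfies $U_1^v=U_2^v$, whereas for $p<(n+1)/2$ an explicit linear-in-$x_n$ interpolation in the neck (smoothly glued to constants on each $\cD_i^0$ and to a $W^{1,p}$ extension of $\varphi$) gives competitors with $U_1^v\ne U_2^v$ of finite energy.

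For the forward direction, testing with $\phi\in C_c^\infty(\widetilde\Omega^0)$ gives $-\dv(|Du_0|^{p-2}Du_0)=0$ in $\widetilde\Omega^0$, while the Dirichlet data and the constants on $\cD_i^0$ are built into $\sA^0$. The flux conditions follow from the Euler--Lagrange identity tested with $\psi\in W^{1,p}_0(\Omega)$ having $\nabla\psi\equiv 0$ on $\cD_1^0\cup\cD_2^0$; integrating by parts in $\widetilde\Omega^0$ (the outer normal of $\widetilde\Omega^0$ on $\partial\cD_i^0$ being $-\nu$) reduces this to $\sum_i c_i^\psi\cF_i=0$ with $c_i^\psi=\psi|_{\cD_i^0}$ and $\cF_i=\int_{\partial\cD_i^0}|Du_0|^{p-2}Du_0\cdot\nu$. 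The capacity estimate forces $c_1^\psi=c_2^\psi$ when $p\ge(n+1)/2$, giving only $\cF_1+\cF_2=0$ as in \eqref{touching_equation_1}; when $p<(n+1)/2$ the values $c_i^\psi$ are independent, giving $\cF_1=\cF_2=0$ as in \eqref{touching_equation_2}. For the reverse direction, convexity of $\xi\mapsto|\xi|^p$ gives $I_p[v]-I_p[u_0]\ge p\int_{\widetilde\Omega^0}|Du_0|^{p-2}Du_0\cdot D(v-u_0)$ for any $v\in\sA^0$; integration by parts kills the bulk term by the PDE and the $\partial\Omega$ term since $v-u_0=0$ there, while the $\partial\cD_i^0$ boundary sum reduces to $-p\sum_i(U_i^v-U_i^{u_0})\cF_i$, which vanishes by the appropriate flux condition (once the capacity estimate ensures $U_1^v=U_2^v$ when $p\ge(n+1)/2$ so that the single combined flux suffices).

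The main obstacle is a clean implementation of the capacity argument: establishing that $v\equiv U_i^v$ a.e.\ on $\cD_i^0$ and that the slicewise ACL representative attains the correct endpoint values $U_1^v,U_2^v$ at the endpoints of a.e.\ vertical segment in the neck. After that, the remaining sign and orientation bookkeeping in the two integration-by-parts computations is routine.
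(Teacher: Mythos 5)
Your proof is correct and rests on the same key computation as the paper's (the slicewise H\"older/capacity estimate giving $|U_1-U_2|^p\int_0^r s^{n-2p}\,ds\lesssim\|Dv\|_{L^p}^p$, and the explicit linear-in-$x_n$ competitor for $p<(n+1)/2$), but the overall logical structure differs. The paper's route to the reverse implication is indirect: it first proves uniqueness of $W^{1,p}$ solutions to \eqref{touching_equation_1} and \eqref{touching_equation_2} via the monotonicity of $\xi\mapsto|\xi|^{p-2}\xi$, observes that the minimizer of \eqref{minimizer} is unique by strict convexity, and then shows the minimizer satisfies the relevant touching equation, so any solution must be the minimizer by uniqueness. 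You instead prove the reverse implication directly from the first-order convexity inequality $I_p[v]-I_p[u_0]\ge p\int|Du_0|^{p-2}Du_0\cdot D(v-u_0)$, integrate by parts, and show the boundary contribution $-p\sum_i(U_i^v-U_i^{u_0})\cF_i$ vanishes using the flux conditions together with the capacity dichotomy. Your approach is shorter because it bypasses the two separate uniqueness statements; the paper's approach has the (small) side benefit that it records the uniqueness of solutions to the touching equations as a free byproduct, which is used implicitly later (e.g.\ in the proof of Theorem \ref{thm_convergence} to identify the limit $u_*$ with $u_0$). Two minor points you should clean up if writing this out in full: (i) the ACL step needs a word explaining why the slice attains the constant values $U_i^v$ at $x_n=h_i(x')$ (the slice continues into $\cD_i^0$, on which $v$ equals a constant a.e.); and (ii) for the forward direction the test functions $\psi$ live in $W^{1,p}_0(\Omega)$, so the glue to ``a $W^{1,p}$ extension of $\varphi$'' is only needed for the separate claim that $\sA^0$ contains competitors with $U_1^v\ne U_2^v$, not for the variation directions $\psi$.
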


\begin{proof}
First, we prove that \eqref{touching_equation_1} has at most one solution $u \in W^{1,p}(\Omega)$. The same conclusion applies to \eqref{touching_equation_2}. Let $u_1, u_2\in W^{1,p}(\Omega)$ be two solutions of \eqref{touching_equation_1}. Multiplying the equation by $u_1 - u_2$ and integrating by parts, we have for $j = 1,2$,
\begin{align*}
0 &= \int_{\widetilde\Omega^0} |D u_j|^{p-2}D u_j \cdot D(u_1 - u_2)  \, dx- \int_{\partial \Omega} |D u_j|^{p-2}D u_j \cdot \nu (u_1 - u_2) \, dS\\
&\quad - \sum_{i=1}^2 \int_{\partial D_i^0} |D u_j|^{p-2}D u_j \cdot \nu (u_1 - u_2) \, dS\\
&= \int_{\widetilde\Omega^0} |D u_j|^{p-2}D u_j \cdot D(u_1 - u_2)  \, dx.
\end{align*}
Therefore,
\begin{align*}
0 &= \int_{\widetilde\Omega^0} \Big( |D u_1|^{p-2}D u_1 - |D u_2|^{p-2}D u_2 \Big) \cdot D(u_1 - u_2)  \, dx \\
&\ge \frac{\min\{1, p-1\}}{2^{p-2}}\int_{\widetilde\Omega^0} (|D u_1| + |D u_2|)^{p-2}|D u_1 - D u_2|^2\, dx.
\end{align*}
This implies $u_1 \equiv u_2$. It is straightforward to see that the minimizer of \eqref{minimizer} with $\varepsilon = 0$ is unique, due to the convexity of $I_p$ and $\sA^0$. It suffices to show that the minimizer $u_0$ satisfies \eqref{touching_equation_1} when $p \ge (n+1)/2$ and satisfies \eqref{touching_equation_2} when $p < (n+1)/2$. We show this by taking different test function $v \in \sA^0$ in the equation
\begin{equation}\label{critical_point}
0 = \left.\frac{d}{dt}  I_p[u_0 + t v] \right|_{t = 0}.
\end{equation}
First we take $v \in C_c^\infty (\widetilde\Omega^0)$. Then \eqref{critical_point} reads as
$$
0 = \int_{\widetilde\Omega^0} |D u_0|^{p-2} D u_0 \cdot D v \, dx.
$$
This implies
$$
-\dv (|D u_0|^{p-2} D u_0) =0  \quad\mbox{in }\widetilde{\Omega}^0.
$$
Next, we take $v \in C_c^\infty (\Omega)$ such that $v = 1$ in $\overline{\cD_1^0 \cup \cD_2^0}$. From \eqref{critical_point} and integration by parts, we have
\begin{align*}
0 =& \int_{\widetilde\Omega^0} |D u_0|^{p-2} D u_0 \cdot D v \, dx\\
=& - \int_{\widetilde\Omega^0} \dv (|D u_0|^{p-2} D u_0) v \, dx + \sum_{i=1}^2 \int_{\partial \cD_i^0} |D u_0|^{p-2} D u_0 \cdot \nu \, dS\\
=& \sum_{i=1}^2 \int_{\partial \cD_i^0} |D u_0|^{p-2} D u_0 \cdot \nu \, dS.
\end{align*}
For the case when $p \ge (n+1)/2$, it remains to show that $u_0$ equals to the same constant on $\overline{\cD_1^0}$ and $\overline{\cD_2^0}$. Assume that $u_0 = U_1$ in $\overline{\cD_1^0}$ and $u_0 = U_2$ in $\overline{\cD_2^0}$ with $U_1 \neq U_2$. Then  by the fundamental theorem of calculus,
$$
U_1 - U_2 = \int_{h_2(x')}^{h_1(x')} D_n u_0 (x) \, dx_n.
$$
Taking  the absolute value and raising to the power of $p$ on the both sides, by H\"older's inequality, \eqref{fg_0}, and \eqref{def:c_2}, we have
$$
|U_1 - U_2|^p \le C |x'|^{2(p-1)} \int_{h_2(x')}^{h_1(x')} |D_n u_0 (x)|^p \, dx_n.
$$
This implies
$$
\int_{|x'|<1/2} \frac{|U_1 - U_2|^p}{|x'|^{2(p-1)}} \, dx' \le C \int_{\Omega_{1/2}^0} |D u_0|^p \, dx.
$$
The left-hand-side diverges since $p \ge (n+1)/2$ and $U_1 \neq U_2$, which leads to a contradiction. Therefore, $U_1 = U_2$.

For the case when $p < (n+1)/2$, we need to show the flux on each of $\partial \cD_i^0$ vanishes. We will only show the flux vanishes on $\partial \cD_1^0$, as a similar argument applies to $\partial \cD_2^0$. Let $v$ be a function compactly supported in $\Omega$ such that $v = 1$ in $\overline{\cD_1^0}$, $v = 0$ in $\overline{\cD_2^0}$,
$$
v(x',x_n) = \left(\frac{2x_n - (h_1(x') + h_2(x'))}{h_1(x') - h_2(x')}\right)_+, \quad x \in \Omega^0_{1/2},
$$
and $v$ is smooth in $\widetilde\Omega^0 \setminus \Omega^0_{1/2}$. Then $v \in \sA^0$. Indeed, we only need to verify
\begin{align*}
\int_{\Omega^0_{1/2}} |\nabla v|^p \, dx \le C \int_{|x'|< 1/2} \int_{\frac{h_1(x') + h_2(x')}{2}}^{h_1(x')} \frac{1}{|x'|^{2p}} \, dx_n dx' \le C \int_{0}^{1/2} r^{n-2p} \le C
\end{align*}
since $n - 2p > -1$. Taking this $v$ in \eqref{critical_point} and integrating by parts, we have
$$
\int_{\partial \cD_1^0} |D u_0|^{p-2} D u_0 \cdot \nu \, dS = 0.
$$
The theorem is proved.
\end{proof}

Next, we show that $u_\varepsilon$ converges to $u_0$ in the following sense.

\begin{theorem}\label{thm_convergence}
Let $u_\varepsilon \in W^{1,p}(\Omega)$ be the solution of \eqref{equinfty}, and $u_0\in W^{1,p}(\Omega)$ be the minimizer of \eqref{minimizer} with $\varepsilon = 0$. Then as $\varepsilon \to 0$, $u_\varepsilon \rightharpoonup u_0$ weakly in $W^{1,p}(\Omega)$, and $u_\varepsilon \to u_0$ strongly in $C^{1,\beta}(K)$ for some $\beta > 0$ and any
$$K \subset \subset \Omega \setminus \Big( \underset{0 < \varepsilon \le \varepsilon_0}{\cup}(\cD_1^\varepsilon \cup \cD_2^\varepsilon) \cup \{0\} \Big) \quad \mbox{with }\varepsilon_0>0.$$ As a consequence,  as $\varepsilon \to 0$, $U_1^\varepsilon \to U_1$ and $U_2^\varepsilon \to U_2$ when $p < (n+1)/2$, and $U_1^\varepsilon, U_2^\varepsilon \to U_0$ when $p \ge (n+1)/2$, where $U_0,U_1,U_2$ are the constants in \eqref{touching_equation_1} and \eqref{touching_equation_2}.
\end{theorem}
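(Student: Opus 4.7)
The plan is a compactness and $\Gamma$-limsup argument followed by local regularity. First, I would exhibit a fixed competitor $w \in W^{1,p}(\Omega)$ with $w = \varphi$ on $\partial\Omega$ and $w$ identically constant on a fixed neighborhood of $\overline{\cD_1^0\cup \cD_2^0}$. For small $\varepsilon$, $w \in \sA^\varepsilon$, so minimality gives $I_p[u_\varepsilon]\le I_p[w]\le C$; combined with $\|u_\varepsilon\|_{L^\infty(\Omega)} \le \|\varphi\|_{L^\infty(\partial\Omega)}$ (from \eqref{U1U2_upperbound} and the maximum principle), this yields $\|u_\varepsilon\|_{W^{1,p}(\Omega)}\le C$. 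Along a subsequence $\varepsilon_k\to 0$, extract $u_{\varepsilon_k}\rightharpoonup u^*$ in $W^{1,p}(\Omega)$ with $u^* = \varphi$ on $\partial\Omega$ by trace continuity. For any compact $K\subset\subset \cD_i^0$, $K \subset \cD_i^\varepsilon$ for all sufficiently small $\varepsilon$ (the translation distance is $\varepsilon/2$), so $\nabla u_{\varepsilon_k}\equiv 0$ on $K$ eventually; passing to the weak limit gives $\nabla u^* \equiv 0$ a.e.\ on $\cD_1^0\cup\cD_2^0$, hence $u^*\in \sA^0$.

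The main step is to produce $v_\varepsilon \in \sA^\varepsilon$ with $\lim I_p[v_\varepsilon] = I_p[u_0]$. Once this is in hand, the inequality $I_p[u_{\varepsilon_k}]\le I_p[v_{\varepsilon_k}]\to I_p[u_0]$ together with weak lower semicontinuity yields $I_p[u^*]\le I_p[u_0]$; uniqueness of the minimizer over $\sA^0$ (Theorem \ref{thm_A1}) then forces $u^* = u_0$, and since every subsequential weak limit equals $u_0$, the whole family $u_\varepsilon\rightharpoonup u_0$. For $p\ge (n+1)/2$, where $u_0\in C^0(\overline\Omega)$ takes the single value $U_0$ on $\overline{\cD_1^0\cup\cD_2^0}$, I would take a cutoff $\chi_\varepsilon$ equal to $1$ on $\{\dist(\cdot, \cD_1^0\cup\cD_2^0)\le \rho\}$ (with $\rho=\rho(\varepsilon)>\varepsilon/2$) and vanishing outside $\{\dist\le 2\rho\}$, and set $v_\varepsilon := (1-\chi_\varepsilon) u_0 + \chi_\varepsilon U_0 \in \sA^\varepsilon$. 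The commutator error $(U_0 - u_0)\nabla \chi_\varepsilon$ in $L^p$ is controlled by splitting the support of $\nabla\chi_\varepsilon$ into a near-origin cylinder $\{|x'|\le \rho^\mu\}$ (with $\mu>(p-1)/(n-1)$) and its complement, using uniform continuity of $u_0$ at the origin on the former and boundary Lipschitz bounds on $\partial \cD_i^0\setminus\{0\}$ on the latter; tuning $\rho(\varepsilon)\to 0$ suitably slowly then yields the recovery. For $1<p<(n+1)/2$, where $u_0$ may have distinct limiting values $U_1 \neq U_2$ at the two inclusions, the construction uses two separate cutoffs near $\cD_1^0$ and $\cD_2^0$ plus a linear interpolation between $U_1$ and $U_2$ in the gap $\Omega^\varepsilon_{r_\varepsilon}$ between $\cD_1^\varepsilon$ and $\cD_2^\varepsilon$ for some $r_\varepsilon\to 0$; the gap contribution is at most $C \int_0^{r_\varepsilon}(\varepsilon + r^2)^{1-p}\, r^{n-2}\,dr = O(\varepsilon^{(n+1)/2 - p}) \to 0$, matching the vanishing $\int_{\Omega^0_{r_\varepsilon}}|\nabla u_0|^p\to 0$ that is lost in the passage from $u_0$ to $v_\varepsilon$.

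Finally, for any compact $K$ as in the statement, $\dist(K, \cD_1^\varepsilon \cup \cD_2^\varepsilon \cup \{0\}) \ge c(K)>0$ uniformly for small $\varepsilon$. Standard interior $C^{1,\alpha}$ estimates for the $p$-Laplacian and the uniform $L^\infty$ bound then provide $\|u_\varepsilon\|_{C^{1,\alpha}(K')} \le C(K)$ on a slight thickening $K\subset\subset K'$; Arzel\`a-Ascoli combined with uniqueness of the weak limit upgrades this to $C^{1,\beta}(K)$ convergence. For $U_i^\varepsilon$, pick $x^*\in\widetilde\Omega^0$ close to $\partial\cD_i^0\setminus\{0\}$ and outside every $\cD_i^\varepsilon$ for all small $\varepsilon$: the $C^{1,\beta}$ convergence gives $u_\varepsilon(x^*)\to u_0(x^*)$, while the uniform gradient bound gives $|U_i^\varepsilon - u_\varepsilon(x^*)| \le C\,\dist(x^*,\partial\cD_i^\varepsilon)$. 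Letting $x^*\to\partial\cD_i^0$ and using $u_0|_{\partial \cD_i^0}=U_i$ (respectively $U_0$) finishes the proof. The main obstacle is the recovery construction in the low-regularity regime $1<p<(n+1)/2$, where $u_0$ itself fails to be continuous at the origin and the singular transition of $u_0$ across the shrinking neck must be matched quantitatively in $L^p$.
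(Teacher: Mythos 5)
Your proposal is correct up to and including the compactness step, the membership $u^*\in\sA^0$, and the passage from weak convergence to $C^{1,\beta}$ convergence on compacts away from the origin; the recovery of $U_i^\varepsilon$ from boundary values is also fine. However, you take a genuinely different route to identify the weak limit $u^*$ with $u_0$: you propose a $\Gamma$-liminf/limsup argument, showing $I_p[u^*]\le I_p[u_0]$ by constructing near-optimal competitors $v_\varepsilon\in\sA^\varepsilon$ with $I_p[v_\varepsilon]\to I_p[u_0]$ and invoking uniqueness of the minimizer. The paper instead verifies that $u^*$ satisfies the Euler–Lagrange system \eqref{touching_equation_1} or \eqref{touching_equation_2} — namely the flux conditions — and invokes the uniqueness statement proved in Theorem \ref{thm_A1}. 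To carry this out, it passes the flux identity $\int_{\partial\cD_1^{\varepsilon_j}}|Du_{\varepsilon_j}|^{p-2}Du_{\varepsilon_j}\cdot\nu=0$ to the limit across a surface $\Gamma_{-,s}^0\cup\eta$, controlling the contribution through $\Gamma_{-,s}^0$ by the uniform pointwise bound $|Du_{\varepsilon_j}|\le C(\varepsilon_j+|x'|^2)^{-1}$ of Proposition \ref{prop_gradient_upperbound_U1-U2} and sending $s\to 0$. The paper's route is shorter and avoids the construction that is the weak spot of your proposal: your recovery sequence for $1<p<(n+1)/2$ is only sketched, and the key difficulty — that the two cutoffs around $\cD_1^0$ and $\cD_2^0$ necessarily overlap at the touching point, and that the linear-in-$x_n$ interpolant in the neck must be glued to $u_0$ across $\{|x'|=r_\varepsilon\}$ where $Du_0$ is unbounded — is not resolved. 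Controlling that mismatch in $W^{1,p}$ requires an asymptotic expansion of $Du_0$ near the origin (available from \eqref{asymptotes_U1-U2} at $\varepsilon=0$, but not invoked in your proposal), so as written this step is a genuine gap, albeit a fixable one. For $p\ge(n+1)/2$ your cutoff argument also silently relies on $|u_0-U_0|$ being much smaller near the origin than mere uniform continuity gives (the condition $\mu>(p-1)/(n-1)$ is incompatible with $\mu<1$ when $p\ge n$); what actually saves you there is the exponential smallness from Lemma \ref{lemma_exp_decay}, which is again not cited.
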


\begin{proof}
First we take an arbitrary function $w \in W^{1,p}(\Omega)$ such that $w = \varphi$ on $\partial \Omega$ and $Dw = 0$ in $\cB$, where $\cB \subset \Omega$ is an open set containing $\overline{\cD_1^\varepsilon \cup \cD_2^\varepsilon}$. Therefore,
$$
\|u_\varepsilon\|_{W^{1,p}(\Omega)} \le \|w\|_{W^{1,p}(\Omega)}.
$$
In particular, $\|u_\varepsilon\|_{W^{1,p}(\Omega)}$ is bounded uniformly in $\varepsilon$. Then there exists a subsequence $\{\varepsilon_j\}_{j \in \bN}$ and a function $u_* \in W^{1,p}(\Omega)$, such that $u_{\varepsilon_j} \rightharpoonup u_*$ weakly in $W^{1,p}(\Omega)$, and $u_{\varepsilon_j} \to u_*$ strongly in $L^p(\Omega)$, as $j \to \infty$. From \eqref{U1U2_upperbound}, we know that $U_1^{\varepsilon_j}$ and $U_2^{\varepsilon_j}$, the values of $u_{\varepsilon_j}$ in $\cD_1^{\varepsilon_j}$ and $\cD_2^{\varepsilon_j}$, are uniformly bounded. Then there exists a subsequence of $\{\varepsilon_j\}_{j \in \bN}$, still denoted by $\{\varepsilon_j\}_{j \in \bN}$, so that $U_i^{\varepsilon_j} \to U_i$ for some constants $U_i$, $i=1,2$. Therefore, for any $x \in \cD_i^0$, we have
$$
u_*(x) = \lim_{j \to \infty} U_i^{\varepsilon_j} = U_i, \quad i=1,2.
$$
On the other hand, for any $k,l\in \mathbb{Z}_{+}$, we denote
\begin{equation*}
    \cK_{k,l}:= \Omega\setminus \Big( \underset{0 < \varepsilon \le 1/k}{\cup}(\cD_1^\varepsilon \cup \cD_2^\varepsilon) \cup B_{1/l}(0)\Big).
\end{equation*}
By the classical $C^{1,\alpha}$ estimate, when $\varepsilon_j \le 1/k$, we have
$$
\| u_{\varepsilon_j} \|_{C^{1,\alpha}(\overline{\cK}_{k,l})} \le C(k,l).
$$
This implies that there is a subsequence that converges in $C^{1,\beta}(\overline{\cK}_{k,l})$ for any $\beta < \alpha$. We can apply the Cantor diagonal argument to select a subsequence, still denoted by $\{\varepsilon_j\}_{j \in \bN}$, such that
\begin{equation}\label{C_1_beta_converge}
u_{\varepsilon_j} \to u_{**} ~~\mbox{in}~~ C^{1,\beta}(K)  \quad \mbox{as}~~j \to \infty
\end{equation}
for any $K  \subset \subset \Omega \setminus \Big( \underset{0 < \varepsilon \le \varepsilon_0}{\cup}(\cD_1^\varepsilon \cup \cD_2^\varepsilon) \cup \{0\} \Big)$ with $\varepsilon_0>0$ and some $C^{1,\beta}$ function $u_{**}$.
Therefore, $u_{**}$ is a weak solution to the $p$-Laplace equation in
$\widetilde\Omega^0$.
Since $u_\varepsilon \to u_*$ strongly in $L^p(\Omega)$, we have $u_* \equiv u_{**}$.  It remains to show that $u_* \equiv u_0$, which implies the convergence of $\{u_\varepsilon\}$.

When $p \ge (n+1)/2$, by the same argument as in the proof of Theorem \ref{thm_A1}, we know that $U_1 = U_2$. It remains to prove that
\begin{equation}\label{eq:sum-flux-zero}
\int_{\partial \cD_1^0 \cup \partial \cD_2^0} |D u_*|^{p-2} D u_{*} \cdot \nu =0.
\end{equation}
Let $\Omega'$ be an open set such that
$ \cD_1^0 \cup \cD_2^0 \subset\subset \Omega'\subset\subset \Omega$.
Since $u_{\varepsilon_j}$ is the solution to \eqref{equinfty}, when $j$ is sufficiently large, by integration by parts in $\Omega'\setminus \overline{\cD_1^{\varepsilon_j} \cup \cD_2^{\varepsilon_j}}$, we have
$$\int_{\partial \Omega'}  |D u_{\varepsilon_j}|^{p-2} D u_{\varepsilon_j} \cdot \nu =0.$$
Therefore, by \eqref{C_1_beta_converge}, we also have
\begin{equation}\label{flux_K_delta}
    \int_{\partial \Omega'}  |D u_{*}|^{p-2} D u_* \cdot \nu =0.
\end{equation}
Since $u_*$ is a weak solution to the $p$-Laplace equation in
$\widetilde\Omega^0$, by integration by parts again in $\Omega'\setminus \overline{\cD_1^{0} \cup \cD_2^{0}}$, \eqref{flux_K_delta} directly implies \eqref{eq:sum-flux-zero}.

When $p < (n+1)/2$, it remains to prove that
$$
\int_{\partial \cD_i^0} |D u_*|^{p-2} D u_* \cdot \nu =0, \quad i=1,2.
$$
We will prove it only for $i=1$. Fix a small $s \in (0,1/2)$, we take a smooth surface $\eta$ so that $\cD_1^\varepsilon$ is surrounded by $\Gamma_{-,s}^0 \cup \eta$. See Figure \ref{zeroflux}.
\begin{figure}[h]
       \centering
       \includegraphics[scale=0.3]{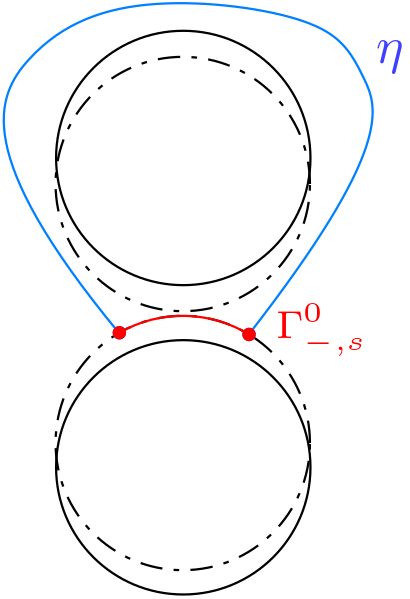}
       \caption{}\label{zeroflux}
\end{figure}

Since $\int_{\partial \cD_1^{\varepsilon_j}} |D u_{\varepsilon_j}|^{p-2} D u_{\varepsilon_j} \cdot \nu =0$, by integration by parts, we have
$$
- \int_{\Gamma_{-,s}^0} |D u_{\varepsilon_j}|^{p-2} D u_{\varepsilon_j} \cdot \nu + \int_{\eta} |D u_{\varepsilon_j}|^{p-2} D u_{\varepsilon_j} \cdot \nu =0.
$$
Note that the minus sign appears because $\nu$ on $\Gamma_{-,s}^0$ is pointing upwards, while $\nu$ on $\eta$ is pointing away from $\cD_1^\varepsilon$. By \eqref{gradient_upperbound_U1-U2-new}, we have $|D u_{\varepsilon_j}(x)| \le C(\varepsilon_j + |x'|^2)^{-1}$. Therefore,
\begin{align*}
\left| \int_{\Gamma_{-,s}^0} |D u_{\varepsilon_j}|^{p-2} D u_{\varepsilon_j} \cdot \nu \right| \le C \int_{|x'|< s} \frac{1}{|x'|^{2p-2}} \, dx' \le C s^{n-2p + 1},
\end{align*}
where we used $2p -2 < n-1$. By \eqref{C_1_beta_converge}, we know that
$$
\int_{\eta} |D u_{\varepsilon_j}|^{p-2} D u_{\varepsilon_j} \cdot \nu \to \int_{\eta} |D u_{*}|^{p-2} D u_{*} \cdot \nu \quad \mbox{as}~~j \to \infty.
$$
Therefore,
$$
 \left|\int_{\eta} |D u_{*}|^{p-2} D u_{*} \cdot \nu \right| \le C s^{n-2p + 1}.
$$
Similarly by \eqref{gradient_upperbound_U1-U2}, we have $|D u_{*}(x)| \le C|x'|^{-2}$ and \begin{align*}
\left| \int_{\Gamma_{-,s}^0} |D u_{*}|^{p-2} D u_{*} \cdot \nu \right| \le C s^{n-2p + 1}.
\end{align*}
By integration by parts, we have
\begin{align*}
\left|  \int_{\cD_1^0} |D u_{*}|^{p-2} D u_{*} \cdot \nu \right| &= \left| \int_{\eta} |D u_{*}|^{p-2} D u_{*} \cdot \nu - \int_{\Gamma_{-,s}^0} |D u_{*}|^{p-2} D u_{*} \cdot \nu \right|\\
&\le C s^{n-2p + 1}.
\end{align*}
Sending $s \to 0$  and using $p< (n+1)/2$, we have
$$
\int_{\cD_1^0} |D u_{*}|^{p-2} D u_{*} \cdot \nu = 0.
$$

Finally, by the uniqueness of solution to \eqref{touching_equation_1} and \eqref{touching_equation_2}, we can conclude that $u_* \equiv u_0$, and the full sequence $u_\varepsilon$ converges to $u_0$ in the corresponding topology.
\end{proof}

\section{Mean oscillation estimates}\label{sec:mean}
In this section, we give the proof of Proposition \ref{thm-1/2} using mean oscillation estimates.
Throughout this section, unless otherwise specified, we use $C$ to denote positive constants depending only on $n$, $p$, $c_1$, and $c_2$, which could differ from line to line. Here $c_1$ and $c_2$ are the same constants in \eqref{fg_1} and \eqref{def:c_2}, respectively. For simplicity, we denote $u:=u_\varepsilon$ and we omit the superscript $\varepsilon$ throughout this section when there is no confusion.

First, we fix a point $\Bar{x}\in\Omega_{1/2}$
and derive some mean oscillation estimates of $Du$ on a ball intersecting $\Omega_1$, namely $\Omega_r(\Bar{x})$, for different radii $r$.

\subsection{Mean oscillation estimates for small \texorpdfstring{$r$}{r}}

We recall a classical interior mean oscillation estimate when $B_{r}(\Bar{x})\subset{\Omega}_1$. Estimates of this type, with different exponents involved, were developed in  \cites{lieberman1991natural, dibenedetto1993higher, DuzMin10}.
\begin{lemma}\label{lem:mean1}
Let $u\in W^{1,p}({\Omega}_1)$ be a solution to \eqref{equinfty}. There exist constants $C>1$ and $\alpha\in(0,1)$ depending only on $n$ and $p$, such that $u\in C^{1,\alpha}({\Omega}_1)$ and for every $B_{r}(\Bar{x})\subset{\Omega}_1$ and $\rho\in (0,r]$, we have
\begin{align*}
    \phi(\Bar{x},\rho)\leq C\left(\frac{\rho}{r}\right)^{\alpha } \phi(\Bar{x},r),
\end{align*}
where we denote
\begin{align}\label{def:phi}
    \phi(\Bar{x},r)=\left(\fint_{\Omega_r(\Bar{x})}|D u-(D u)_{\Omega_r(\Bar{x})}|^p \right)^{\frac{1}{p}}.
\end{align}
\end{lemma}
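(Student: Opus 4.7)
The plan is to reduce Lemma~\ref{lem:mean1} to the classical interior $C^{1,\alpha}$ regularity theory for $p$-harmonic functions, which applies here precisely because the assumption $B_r(\bar x) \subset \Omega_1$ means the ball does not meet the perforation boundaries $\Gamma_{\pm}^\varepsilon$. On such a ball, $u$ is a bona fide solution of the homogeneous $p$-Laplace equation $-\dv(|Du|^{p-2}Du) = 0$, and the set $\Omega_r(\bar x)=\widetilde\Omega^\varepsilon\cap B_r(\bar x)$ appearing in the definition of $\phi(\bar x,r)$ reduces to $B_r(\bar x)$. No free boundary or small parameter $\varepsilon$ enters this step.

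With that reduction in hand, I would invoke the classical interior Campanato-type gradient estimate of DiBenedetto, Tolksdorf, Lewis, and Lieberman (cf.\ the formulations in \cite{lieberman1991natural, dibenedetto1993higher, DuzMin10}): for every constant vector $q \in \bR^{n}$,
\begin{equation*}
r^\alpha [Du]_{C^\alpha(B_{r/2}(\bar x))} \le C \left( \fint_{B_r(\bar x)} |Du - q|^p \right)^{1/p},
\end{equation*}
with $\alpha \in (0,1)$ and $C$ depending only on $n$ and $p$. Specializing to $q = (Du)_{B_r(\bar x)}$ yields $r^\alpha [Du]_{C^\alpha(B_{r/2}(\bar x))} \le C \phi(\bar x, r)$.

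For $\rho \in (0, r/2]$ the desired decay then follows at once from the pointwise H\"older estimate:
\begin{equation*}
\phi(\bar x, \rho) \le \osc_{B_\rho(\bar x)} Du \le C \rho^\alpha [Du]_{C^\alpha(B_{r/2}(\bar x))} \le C (\rho/r)^\alpha \phi(\bar x, r),
\end{equation*}
whereas for $\rho \in (r/2, r]$ the claimed inequality holds trivially after enlarging $C$. Chaining these two ranges gives the lemma on the whole interval $(0,r]$.

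The only genuine subtlety --- were one to reproduce the quoted Campanato-type bound from scratch rather than cite it --- is its translation invariance in $q$. Adding a linear function to $u$ does not preserve $p$-harmonicity, so the freedom to insert an arbitrary $q$ on the right-hand side is not automatic; it is obtained via the Uhlenbeck-type dichotomy. In the non-degenerate regime, where $|q|$ dominates the deviation $(\fint_{B_r}|Du-q|^p)^{1/p}$, one linearizes the equation around $q$ and applies Schauder-type estimates to the resulting uniformly elliptic linear equation for $u-q\cdot x$. In the degenerate regime, $|Du|$ is itself controlled by the deviation, so that the excess decay is obtained by a direct iteration based on the nonlinear Caccioppoli inequality and Morrey--Campanato theory. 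Since all of this is standard interior theory for the $p$-Laplacian, the authors reasonably state Lemma~\ref{lem:mean1} as a quotable black box.
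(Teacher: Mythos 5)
Your proposal is correct and matches what the paper itself does: the paper does not prove Lemma~\ref{lem:mean1} but states it as a classical black box and points to \cite{lieberman1991natural}, \cite{dibenedetto1993higher}, and \cite{DuzMin10}, which is exactly what you are reproducing. You correctly note that since $B_r(\bar{x})\subset\Omega_1$ the perforations play no role and $u$ is a genuine $p$-harmonic function on the ball, so only interior theory is needed, and you correctly flag the one subtle point: the freedom to insert an arbitrary constant vector $q$ into the Campanato excess is not free (since $u - q\cdot x$ is not $p$-harmonic) and must be recovered by the Uhlenbeck degenerate/non-degenerate dichotomy. Your reduction of the mean oscillation bound to the pointwise H\"older estimate via $\phi(\bar x,\rho)\le\osc_{B_\rho(\bar x)}Du$ and the trivial treatment of $\rho\in(r/2,r]$ are both fine. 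One small remark: the Campanato-type bound you quote, namely $r^\alpha[Du]_{C^\alpha(B_{r/2}(\bar x))}\le C(\fint_{B_r(\bar x)}|Du-q|^p)^{1/p}$ for all $q$, is essentially equivalent to the excess-decay estimate of the lemma itself (via Campanato embedding), so you are not so much deriving the lemma from something strictly simpler as restating it in an equivalent integral-vs.-pointwise form; that is harmless here since either form is what the cited references actually prove.
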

\subsection{Mean oscillation estimates for intermediate \texorpdfstring{$r$}{r}}
Next, we consider the case when $B_{r}(\Bar{x})$ intersects with only one of $\Gamma_+$ and $\Gamma_-$.
In this case, we derive mean oscillation estimates around  any $\hat{x}\in (\Gamma_+\cup\Gamma_-)\cap\{x\in\mathbb{R}^n: |x'|\leq 3/4\}$.


Without loss of generality, let $\hat{x}\in \Gamma_-\cap\{x\in \mathbb{R}^n: |x'|\leq 3/4\}$. Then by \eqref{fg_1} and \eqref{def:c_2}, there exists a constant $c=c(n,c_1,c_2)\in(0,\min\{c_1/4,1/4\})$, such that $B(\hat{x},r)\cap \Gamma_+=\emptyset$ for any $r\in(0,c\underline{\delta}(\hat{x}))$. Here we recall \eqref{def:delta}. We first choose a coordinate  system $y=(y',y_n)$ such that $y(\hat{x})=0$, the direction of axis $y_n$ is the normal vector at $\hat{x}\in\Gamma_-$ pointing upwards.
Note that the coordinate $y$ is a rotation (plus a transition) of the coordinate $x$, namely $y=T(x-\hat{x})$ for some rotation matrix $T\in\mathbb{R}^{n\times n}$, which maps the normal vector of $\Gamma_-$ at $\hat{x}$ pointing upward, namely $\nu=(-Dh_2(\hat{x}'),1)/\sqrt{1+|Dh_2(\hat{x}')|^2}$ in the $x$-coordinate, to the unit vector $\mathbf{e}_{y_n}=(0,\ldots,0,1)$ in $y$-coordinate. Therefore, by \eqref{fg_0} and \eqref{def:c_2}, there exists a constant $C=C(n,c_2)>0$, such that
\begin{align}\label{rotation}
    |T-I_n|\leq C |\hat{x}'| \quad \text{and} \quad |T^{-1}-I_n|\leq C |\hat{x}'|.
\end{align}
Thus there exists a constant $c_3=c_3(n,c_1,c_2)\in(0,\min\{c_1/8,1/8\})$ such that
$\Omega_{R_0}(\hat{x})=\{y\in B_{R_0}:y_n>\chi(y')\}$, where $R_0=c_3\underline{\delta}(\hat{x})\in(0,1/4)$ and
$\chi:\{y'\in\mathbb{R}^{n-1}:|y'|<R_0\}\rightarrow\mathbb{R}$ is a $C^2$ function in the $y$-coordinate system such that
\begin{equation}\label{def:chi}
\chi(0')=0,\quad D_{y'}\chi(0^{\prime})=0,\quad \|\chi\|_{C^2}\leq C \|h_2\|_{C^2},
\end{equation}
for some constant $C=C(n)>0$.
Then we let
$$
z=\Lambda(y) =(y',\,y_n-\chi(y')).
$$
Since $\Gamma_-$ is $C^2$, by \eqref{def:chi} there exist constants
\begin{equation}\label{def:c_4}
    c_4=c_4(n,c_1,c_2)\in(0,\min\{c_1/8,1/8\}),
\end{equation}
$C=C(n,c_1,c_2)>0$, and $R_1=c_4\underline{\delta}(\hat{x})$ such that
\begin{equation*}
   |D_{y'}\chi(y')|\leq C\,|y'|\leq 1/2 \quad \text{if} \quad |y'|\leq 2R_1,
\end{equation*}
\begin{equation*}
    \Omega_{r/2}(\hat{x})\subset \Lambda^{-1}(B^+_r)\subset \Omega_{2r}(\hat{x}) \quad \forall\, r\in(0,2R_1],
\end{equation*}
and thus
\begin{equation}\label{co-diff}
   |D\Lambda(y)-I_n|\leq C\,|y'|\leq 1/2 \quad \text{if} \quad |y'|\leq 2R_1.
\end{equation}
Therefore, there exist positive constants $c(n)$ and $c'(n)$ depending only on $n$, such that for any $\hat{x}\in (\Gamma_+\cup\Gamma_-)\cap \{x\in\mathbb{R}^n: |x'|\leq 3/4\}$ and $0<r\leq c_4\underline{\delta}(\hat{x})$,
\begin{equation}\label{volume1}
    c(n) r^n\leq  |\Omega_r(\hat{x})| \leq c'(n)r^n.
\end{equation}
Note that
\begin{align*}
    \text{det}(D\Lambda)\equiv 1.
\end{align*}
Then $u_1(z):=u(\Lambda^{-1}(z))$ satisfies the following equation with constant Dirichlet boundary condition
\begin{equation}\label{eq:u1}
    \left\{
\begin{aligned}
     -\dv_z\left(|A^T D_z u_1|^{p-2}AA^T  D_z u_1\right)=\;&0 \quad  &&\text{in} \,\, B^+_{R_1},\\
   u_1=\;&U_2^{\varepsilon} \quad &&\text{on}\,\, B_{R_1}\cap\partial \mathbb{R}^n_+,
\end{aligned}
\right.
\end{equation}
where we denote
$$
A:={A}(z):=(a_{ij}(z)):=D{\Lambda}(\Lambda^{-1}(z)) .
$$
Next we extend the equation to the whole ball $B_{R_1}$.
We take the even extension of $a_{nn}$ and $a_{ij},i,j = 1,2,\ldots, n-1,$ with respect to $z_n = 0$, and take the odd extension of $a_{in}$ and $a_{ni},i = 1,2,\ldots, n-1,$ with respect to $z_n = 0$. Then we reflect $u_1$ with respect to $z_n=0$. Namely, we define $u_1(z)=2\,U_2^\varepsilon-u_1(z',-z_n)$ for $z\in B^-_{R_1}$. We still denote these functions by $u_1$ and ${A}$ after the extension. Because of the Dirichlet boundary condition, it is easily seen that $u_1$ satisfies
\begin{equation}\label{eq:u1:ext}
 -\dv_z\big(\mathbf{A}(z,D_z u_1)\big)=\;0 \quad  \text{in} \,\, B_{R_1},
\end{equation}
where the nonlinear operator $\mathbf{A}$ is defined as
$$
\mathbf{A}(z,\xi)=  |A^T\xi|^{p-2} AA^T\xi \quad \quad \text{for} \,\, z\in B_{R_1},\,\,\xi\in \mathbb{R}^n.
$$
By \eqref{co-diff}, similar to \cite{DYZ23}*{Lemma 2.3}, there exists a constant $C=C(n,p,c_1,c_2)>0$, such that for any $z\in B_{R_1}$ and $\xi\in\mathbb{R}^n$,
\begin{align}\label{co-diff2}
    |\mathbf{A}(z,\xi)-|\xi|^{p-2}\xi|\leq C\,|z'|\,|\xi|^{p-1}.
\end{align}
Assume that $r\in(0,R_1]$. We let $v_1\in u_1+W_0^{1,p}(B_r)$ be the unique solution to
\begin{equation}\label{eq:v1}
    \left\{
\begin{aligned}
     -\dv_z(|D_z v_1|^{p-2} D_z v_1) =\;&  0 \quad \;\;\text{in} \,\, B_{r},\\
     v_1 =\; &  u_1 \quad \text{on}\,\, \partial B_{r}. \\
\end{aligned}
\right.
\end{equation}
By testing  \eqref{eq:v1} and \eqref{eq:u1:ext} with $v_1-u_1$ and using \eqref{co-diff2}, we have the comparison estimate
\begin{align}\label{comp1}
    \fint_{B_r}|D_z u_1-D_z v_1|^p\leq Cr^{\min\{2,p\}}\fint_{B_{r}}|D_z u_1|^p,
\end{align}
where $C>0$ is a constant depending only on $n$, $p$, $c_1$, and $c_2$.
For detailed proof of \eqref{comp1}, see \cite{DuzMin10}*{Eq. (4.35)} when $p\in(1,2)$ and \cite{DuzMin11}*{Lemma 3.4} when $p\geq 2$.

Applying Lemma \ref{lem:mean1} and the comparison estimate \eqref{comp1}, we have
\begin{lemma}
Suppose that $u_1\in W^{1,p}(B^+_{R_1})$ is a solution to  \eqref{eq:u1}. Then for any $\mu\in(0,1)$ and $r\in (0,R_1]$, we have
\begin{equation}\label{ineq:pr}
\begin{aligned}
&\left(\fint_{B^+_{ \mu r}}|D_{z'} u_1|^{p}+|D_{z_n}u_1-(D_{z_n} u_1)_{B^+_{ \mu r}}|^{p}\right)^{1/p}\\
&\leq C\mu^{\alpha}\left(\fint_{B^+_{ r}}|D_{z'} u_1|^{p}+|D_{z_n}u_1-(D_{z_n} u_1)_{B^+_{ r}}|^{p}\right)^{1/p}+C_\mu r^{\theta_p}\left(\fint_{B^+_{r}}|D_z u_1|^{p}\right)^{{1}/{p}},
\end{aligned}
\end{equation}
where $\theta_p=\min\{1,2/p\}$, $\alpha$ is the same constant as in Lemma \ref{lem:mean1}, $C_\mu$ is a constant depending on $\mu$, $n$, $p$, $c_1$, and $c_2$, and $C$ is a constant depending on $n$, $p$, $c_1$, and $c_2$.
\end{lemma}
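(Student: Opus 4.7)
The plan is to freeze the coefficients at $z'=0$ by comparing $u_1$ with the $p$-harmonic function $v_1$ from \eqref{eq:v1}, apply the interior mean oscillation estimate of Lemma \ref{lem:mean1} to $v_1$, and transfer the resulting decay back to $u_1$ using the comparison estimate \eqref{comp1}. The constant Dirichlet condition $u_1\equiv U_2^\varepsilon$ on $\{z_n=0\}$ supplies a reflection symmetry that brings the half-ball inequality \eqref{ineq:pr} within reach of a purely interior regularity result.

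The key observation is that, after the odd extension carried out just before \eqref{eq:u1:ext}, the function $u_1-U_2^\varepsilon$ is odd in $z_n$ on $B_{R_1}$, hence so is its trace on $\partial B_r$. The $p$-Laplacian commutes with both the spatial reflection $z_n\mapsto -z_n$ and the sign change $w\mapsto -w$, and is unaffected by additive constants, so the function $(z',z_n)\mapsto 2U_2^\varepsilon - v_1(z',-z_n)$ also solves \eqref{eq:v1} with the same boundary data as $v_1$. Uniqueness for the $p$-Laplace Dirichlet problem then forces $v_1-U_2^\varepsilon$ to be odd in $z_n$, so $D_{z'}v_1$ is odd and $D_{z_n}v_1$ is even in $z_n$. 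In particular $(D_{z'}v_1)_{B_s}=0$, $(D_{z_n}v_1)_{B_s}=(D_{z_n}v_1)_{B_s^+}$ for every $s\in(0,r]$, and $\fint_{B_s}F=\fint_{B_s^+}F$ whenever $F$ is even in $z_n$.

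Granting these parities, the next step is to apply Lemma \ref{lem:mean1} to $v_1$ on $B_r$ (valid because $v_1$ is $p$-harmonic on the full ball $B_r$) at the center $\bar{x}=0$ with radius $\mu r$, and to rewrite the resulting oscillation inequality via the identities above so that all integrals are restricted to $B^+_{\mu r}$ and $B^+_r$. Writing $Du_1=Dv_1+D(u_1-v_1)$ in each summand and invoking Jensen's inequality to handle the average of $D_{z_n}(u_1-v_1)$, a triangle inequality replaces $v_1$ by $u_1$ on both sides of the oscillation inequality at the cost of an error term dominated by $\bigl(\fint_{B^+_{\mu r}}|D(u_1-v_1)|^p\bigr)^{1/p}$. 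Enlarging $B^+_{\mu r}$ to $B_r$ costs a factor $\mu^{-n/p}$, and then \eqref{comp1} bounds the error by $C\mu^{-n/p}r^{\theta_p}\bigl(\fint_{B_r}|Du_1|^p\bigr)^{1/p}$; a final use of the $z_n$-parity of $|Du_1|^p$ reduces the right-hand integral to $B^+_r$, yielding the second term in \eqref{ineq:pr} with $C_\mu=C\mu^{-n/p}$.

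The only subtle point is the symmetry argument for $v_1$, which requires both the invariance of the $p$-Laplace operator under the combined reflection $w(z',z_n)\mapsto 2U_2^\varepsilon - w(z',-z_n)$ and uniqueness of weak solutions to the Dirichlet problem for the $p$-Laplacian with $W^{1,p}$ boundary data. Both are standard for $p>1$, so once they are recorded the proof reduces to a routine combination of the interior mean oscillation estimate of Lemma \ref{lem:mean1} and the already established comparison bound \eqref{comp1}.
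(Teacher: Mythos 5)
Your proof is correct and follows essentially the same path as the paper's: compare $u_1$ with the $p$-harmonic $v_1$ from \eqref{eq:v1}, apply the interior mean oscillation estimate of Lemma \ref{lem:mean1} to $v_1$, transfer back to $u_1$ via the triangle inequality and the comparison bound \eqref{comp1}, and use the reflection parity to pass between full- and half-ball quantities. The paper organizes the argument slightly more economically: it runs the entire chain of triangle and comparison inequalities on full-ball oscillations of $u_1$ and $v_1$ (so no symmetry of $v_1$ is ever invoked), and only at the final step uses the parity of the extended $u_1$ to replace full-ball averages and integrals by their half-ball counterparts; your detour through the parity of $v_1$ is valid but not needed. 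One remark worth recording: the paper's closing sentence ``Since $u_1$ is even in $z_n$'' is a typo — as you correctly note, the extension makes $u_1 - U_2^\varepsilon$ \emph{odd} in $z_n$, so $D_{z'}u_1$ is odd (hence has zero full-ball average) and $D_{z_n}u_1$ is even, which is exactly what is needed to pass from \eqref{ineq:phi} to \eqref{ineq:pr}.
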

\begin{proof}
 By Lemma \ref{lem:mean1}, \eqref{comp1}, and the triangle inequality, we have
\begin{equation}\label{ineq:phi}
\begin{aligned}
        &\left(\fint_{B_{\mu r}}|D_z u_1-(D_z u_1)_{B_{\mu r}}|^{p}\right)^{1/p}
        \\
        &\leq C\left(\fint_{B_{\mu r}}|D_z v_1-(D_z v_1)_{B_{\mu r}}|^{p}\right)^{1/p}+C\left(\fint_{B_{\mu r}}|D_z u_1-D_z v_1|^{p}\right)^{1/p}\\
        &\leq C\mu^{\alpha}\left(\fint_{B_r}|D_z v_1-(D_z v_1)_{B_{r}}|^{p}\right)^{1/p}+ C\mu^{-\frac n{p}}\left(\fint_{B_r}|D_z u_1-D_z v_1|^{p}\right)^{1/p}\\
        &\leq C\mu^{\alpha}\left(\fint_{B_r}|D_z u_1-(D_z u_1)_{B_{r}}|^{p}\right)^{1/p}+ C\mu^{-\frac n{p}}\left(\fint_{B_r}|D_z u_1-D_z v_1|^{p}\right)^{1/p}\\
        &\leq C\mu^{\alpha}\left(\fint_{B_r}|D_z u_1-(D_z u_1)_{B_{r}}|^{p}\right)^{1/p}+C_\mu r^{\theta_p} \left(\fint_{B_{r}}|D_z u_1|^{p}\right)^{1/p}.
\end{aligned}
\end{equation}
Since $u_1$ is even in $z_n$, \eqref{ineq:phi} directly implies \eqref{ineq:pr}. The proof is completed.
\end{proof}
We now define
\begin{align}\label{def:psi}
\psi(\hat{x},r)=\left(\fint_{\Omega_{r}(\hat{x})}|D_{y'} u|^{p}+|D_{y_n}u-(D_{y_n} u)_{\Omega_r(\hat{x})}|^{p}\right)^{1/p}.
\end{align}
Following a similar argument as in the proof of \cite{DYZ23}*{Lemma 2.5}, we have
\begin{lemma}\label{lem:mean2}
Suppose that $u$ is a solution to \eqref{equinfty} and $\hat{x}\in (\Gamma_+\cup\Gamma_-)\cap \{x\in\mathbb{R}^n: |x'|\leq 3/4\}$. Then there exist constants  $C>0$ depending only on $n$, $p$, $c_1$, and $c_2$, and $C_\mu>0$ depending on $n$, $p$, $c_1$, $c_2$, and $\mu$, such that for any $\mu\in(0,1/4)$ and $r\in(0,\,c_4\underline{\delta}(\hat{x})]$,  it holds that
\begin{equation}\label{ineq:mean2}
\begin{aligned}
       &\psi(\hat{x},\mu r)\leq C\mu^{\alpha}\psi(\hat{x}, r)+C_\mu r^{\theta_p}\left(\fint_{\Omega_{r}(\hat{x})}|D u|^{p}\right)^{1/p},
\end{aligned}
\end{equation}
where $\theta_p=\min\{1,2/p\}$, $\alpha\in(0,1)$ is the same constant as in Lemma \ref{lem:mean1}, $c_4=c_4(n,c_1,c_2)\in(0,\min\{c_1/8,1/8\})$ is the same constant as in \eqref{def:c_4} and $\psi$ is defined in \eqref{def:psi}.
\end{lemma}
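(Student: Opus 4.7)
The plan is to transfer the estimate to the flattened $z$-coordinate system introduced in the construction preceding the lemma, apply the flat-boundary estimate \eqref{ineq:pr} there, and transport the conclusion back. Set $\tilde u(y):=u(T^{-1}y+\hat x)$ and $u_1(z):=\tilde u(\Lambda^{-1}(z))$. Since $T$ is orthogonal and $\det(D\Lambda)\equiv 1$, the composition $\Lambda\circ T$ preserves Lebesgue measure. From $\nabla_y\tilde u(y)=(D\Lambda(y))^T\nabla_z u_1(\Lambda(y))$ one reads off the pointwise identities
\begin{equation*}
D_{y_n}\tilde u(y)=D_{z_n}u_1(\Lambda(y)),\qquad D_{y_i}\tilde u(y)=D_{z_i}u_1(\Lambda(y))-\partial_i\chi(y')\,D_{z_n}u_1(\Lambda(y))\quad\text{for }i<n.
\end{equation*}
By \eqref{def:chi}, $|D_{y'}\chi(y')|\le C|y'|$, so for $|y'|\le r\le R_1$ the tangential parts of $D_y\tilde u$ and $D_z u_1$ agree up to an error of order $Cr|Du_1|$, while the normal parts coincide exactly. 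I will also use the containment $\Omega_{r/2}(\hat x)\subset\Lambda^{-1}(B^+_r)\subset\Omega_{2r}(\hat x)$ and the volume comparability \eqref{volume1}.

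The proof then proceeds as a three-step sandwich. Denote the half-ball analogue of $\psi$ by
\begin{equation*}
\Phi(s):=\left(\fint_{B^+_s}|D_{z'}u_1|^p+|D_{z_n}u_1-(D_{z_n}u_1)_{B^+_s}|^p\right)^{1/p}.
\end{equation*}
First, using $z(\Omega_{\mu r}(\hat x))\subset B^+_{2\mu r}$, the pointwise relations above, and the standard device $\fint_A|f-(f)_A|^p\le C\fint_A|f-c|^p$ applied with $c=(D_{z_n}u_1)_{B^+_{2\mu r}}$ to exchange the ambient average, I will show
\begin{equation*}
\psi(\hat x,\mu r)\le C\Phi(2\mu r)+Cr\left(\fint_{B^+_{2\mu r}}|D_z u_1|^p\right)^{1/p}.
\end{equation*}
Second, \eqref{ineq:pr} applied with ratio $4\mu\in(0,1)$ and base radius $r/2\le R_1$ yields
\begin{equation*}
\Phi(2\mu r)\le C\mu^\alpha\Phi(r/2)+C_\mu r^{\theta_p}\left(\fint_{B^+_{r/2}}|D_z u_1|^p\right)^{1/p}.
\end{equation*}
Third, using $B^+_{r/2}\subset z(\Omega_r(\hat x))$, the dual identity $D_{z_i}u_1=D_{y_i}\tilde u+\partial_i\chi\cdot D_{y_n}\tilde u$, and the trivial bound $\fint|D_{y_n}\tilde u|^p\le \fint|Du|^p$, the reverse transfer gives
\begin{equation*}
\Phi(r/2)\le C\psi(\hat x,r)+Cr\left(\fint_{\Omega_r(\hat x)}|Du|^p\right)^{1/p}.
\end{equation*}

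Concatenating these three inequalities, absorbing the $\mu^{-n/p}$ factor that arises from passing between $B^+_{2\mu r}$ and $B^+_{r/2}$ into $C_\mu$, and using $r\le r^{\theta_p}$ (valid since $r\le 1$ and $\theta_p\in(0,1]$) to dominate the $O(r)$ errors by $r^{\theta_p}$, will recover \eqref{ineq:mean2}. The main technical point is to ensure that the constant $C$ multiplying $\mu^\alpha\psi(\hat x,r)$ remains independent of $\mu$; this is guaranteed because every transfer step between the $x$- and $z$-coordinates involves comparisons of averages over sets whose volumes are comparable up to constants depending only on $n$, $c_1$, and $c_2$ via \eqref{volume1}, so all $\mu$-dependent blow-ups are safely confined to the lower-order $r^{\theta_p}$ term.
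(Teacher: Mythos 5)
Your proposal is correct and follows the same route the paper intends (which it outsources to the proof of Lemma 2.5 in the cited work \cite{DYZ23}): transfer $\psi$ to the flattened $z$-coordinates where the half-ball estimate \eqref{ineq:pr} is available, apply it with ratio $4\mu$ at base radius $r/2$, and transfer back, with the $O(r)$ errors from the off-diagonal terms $\partial_i\chi\cdot D_{z_n}u_1$ absorbed into the $r^{\theta_p}$ term since $\theta_p\le 1$. The key bookkeeping points you flag are the right ones: the containments $\Lambda(\Omega_{\mu r}(\hat x))\subset B^+_{2\mu r}$ and $B^+_{r/2}\subset\Lambda(\Omega_r(\hat x))$ hold under the hypotheses ($\mu<1/4$, $r\le c_4\underline\delta(\hat x)$), the volume comparability \eqref{volume1} ensures the transfer constants are $\mu$-independent, and the standard mean-exchange inequality $\fint_A|f-(f)_A|^p\le 2^p\fint_A|f-c|^p$ does the rest; so the argument closes cleanly.
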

By iteration, Lemma \ref{lem:mean2} also implies
\begin{corollary}
    \label{lem:iter1}
Let $u$, $\hat{x}$, $c_4$, $\alpha$, and $\theta_p$ be as in Lemma \ref{lem:mean2} and ${\alpha_1}\in (0,\min\{\alpha, \theta_p\})$. Then there exists a constant  $C>0$ depending only on $n$, $p$, $c_1$, $c_2$, and $\alpha_1$, such that for any  $0<\rho\leq r\leq c_4\underline{\delta}(\hat{x})$, it holds that
\begin{equation*}
\begin{aligned}
    \psi(\hat{x},\rho)\leq  C\left(\frac{\rho}{r}\right)^{\alpha_1} \psi(\hat{x},r)+C\,\rho^{\alpha_1}\|Du\|_{L^\infty(\Omega_{r}(\hat{x}))}.
\end{aligned}
\end{equation*}
\end{corollary}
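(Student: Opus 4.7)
The plan is a standard Campanato-type iteration starting from Lemma \ref{lem:mean2}. I would first replace the $L^p$ average on its right-hand side by the pointwise bound $M := \|Du\|_{L^\infty(\Omega_r(\hat x))}$, reducing \eqref{ineq:mean2} to
\[
\psi(\hat x, \mu r) \le C\mu^{\alpha}\,\psi(\hat x, r) + C_\mu r^{\theta_p} M
\quad \text{for every }\mu\in(0,1/4)\text{ and }r\in(0,c_4\underline{\delta}(\hat x)].
\]
Since $\alpha_1 < \alpha$, I would then fix $\mu=\mu(n,p,c_1,c_2,\alpha_1)\in(0,1/4)$ so small that $C\mu^{\alpha}\le\mu^{\alpha_1}$, absorbing the constant on the first term; from this point $\mu$ is a frozen constant and all $C_\mu$'s depend only on $n,p,c_1,c_2,\alpha_1$.

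Next I would iterate on the geometric sequence $r_k=\mu^k r$, using the monotonicity of $M$ in the domain so that the same $M$ works at every scale. A direct induction gives
\[
\psi(\hat x, \mu^k r) \le \mu^{k\alpha_1}\psi(\hat x, r) + C_\mu M r^{\theta_p}\sum_{j=0}^{k-1}\mu^{(k-1-j)\alpha_1 + j\theta_p}.
\]
The geometric sum factors as $\mu^{(k-1)\alpha_1}\sum_{j=0}^{k-1}\mu^{j(\theta_p-\alpha_1)}$, which is bounded by $C\mu^{k\alpha_1}$ uniformly in $k$ precisely because $\alpha_1<\theta_p$. Hence
\[
\psi(\hat x,\mu^k r)\le C\mu^{k\alpha_1}\bigl(\psi(\hat x,r)+Mr^{\theta_p}\bigr).
\]

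For arbitrary $\rho\in(0,r]$, choose the unique $k\ge 0$ with $\mu^{k+1}r<\rho\le\mu^k r$. Moving from scale $\mu^k r$ down to $\rho$ is a routine comparison: using that $(D_{y_n}u)_{\Omega_\rho(\hat x)}$ minimizes the $L^p$-deviation and the uniform volume bound \eqref{volume1}, one gets $\psi(\hat x,\rho)\le C\mu^{-n/p}\psi(\hat x,\mu^k r)$. Combining this with the previous display, using $\mu^k r\le\mu^{-1}\rho$, and using $r\le 2c_4<1$ so that $r^{\theta_p-\alpha_1}\le 1$, we arrive at
\[
\psi(\hat x,\rho)\le C\Bigl(\frac{\rho}{r}\Bigr)^{\alpha_1}\psi(\hat x,r)+C\rho^{\alpha_1}M,
\]
which is the claimed estimate. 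No step is a genuine obstacle: the only mild point is the boundary geometry in the final comparison, but this is already handled uniformly by \eqref{volume1}; the strict gap $\alpha_1<\alpha$ is what allows the one-step contraction, while the strict gap $\alpha_1<\theta_p$ is exactly what makes the geometric series summable.
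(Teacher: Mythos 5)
Your proposal is correct and follows essentially the same Campanato iteration as the paper: fix $\mu$ small so the contraction factor drops to $\mu^{\alpha_1}$, iterate along $\mu^k r$, sum the geometric series (where $\alpha_1<\theta_p$ is exactly what is needed), and pass to arbitrary $\rho$ by a one-step comparison using the volume bound \eqref{volume1}. The only cosmetic difference is that you iterate from $r$ down to $\mu^k r$ and then compare $\rho$ to $\mu^k r$, whereas the paper applies the iterated inequality starting from the intermediate radius $\mu^{-j}\rho$; also, the mean does not literally minimize the $L^p$-deviation for $p\ne 2$, but it does so up to a factor $2^p$, which is all that is used.
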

\begin{proof}
We choose $\mu=\mu(n,p,c_1,c_2,\alpha_1)\in (0,1/6)$ sufficiently small such that $C\mu^{\alpha-{\alpha_1}}<1$, where $C$ is the same constant in \eqref{ineq:mean2}.
Then Lemma \ref{lem:mean2} implies that for any $r\in(0,\,c_4\underline{\delta}(\hat{x})]$, we have
\begin{equation}
                        \label{eq7.15}
         \psi(\hat{x},\mu r)\leq\mu^{\alpha_1}\psi(\hat{x},r)+C_{\alpha_1}r^{\theta_p}\|D u\|_{L^\infty(\Omega_{r}(\hat{x}))},
\end{equation}
where $C_{\alpha_1}>0$ is a constant depending only on $n$, $p$, $c_1$, $c_2$, and $\alpha_1$.
By iteration, from \eqref{eq7.15} we get
\begin{equation}
\begin{aligned}\label{iter:int1}
    &\psi(\hat{x},\mu^j r)\leq \mu^{{\alpha_1} j}\psi(\hat{x},r)+C_{\alpha_1}\sum_{i=1}^j \mu^{{\alpha_1} (i-1)}(\mu^{j-i}r)^{\theta_p}\|Du\|_{L^\infty(\Omega_{r}(\hat{x}))}\\
    &= \mu^{{\alpha_1} j}\psi(\hat{x},r)+C_{\alpha_1}\sum_{i=1}^j \mu^{{\alpha_1} (j-1)}\mu^{(j-i)(\theta_p-\alpha_1)}r^{\theta_p}\|Du\|_{L^\infty(\Omega_{r}(\hat{x}))}\\
    &\leq \mu^{{\alpha_1} j}\psi(\hat{x},r)+C_{\alpha_1}(\mu^j r)^{\alpha_1}\|Du\|_{L^\infty(\Omega_{r}(\hat{x}))}.
\end{aligned}
\end{equation}
Here in the last inequality we used the facts that $\alpha_1<\theta_p$ and $r\in(0,1)$.

Now for any $0<\rho\leq r\leq c_4\underline{\delta}(\hat{x})$, let $j$ be the integer such that
$\mu^{j+1}<{\rho}/{r}\leq \mu^j$. Then by \eqref{iter:int1} with $\mu^{-j}\rho$ in place of $r$, we get
\begin{align*}
      &\psi(\hat{x},\rho)\leq \mu^{{\alpha_1} j}\psi(\hat{x},\mu^{-j}\rho)+C_{ \alpha_1}\,\rho^{\alpha_1}\|Du\|_{L^\infty(\Omega_{\mu^{-j}\rho}(\hat{x}))}\\
    &\leq  C_{\alpha_1}\left(\frac{\rho}{r}\right)^{\alpha_1} \psi(\hat{x},r)+C_{\alpha_1}\,\rho^{\alpha_1}\|Du\|_{L^\infty(\Omega_{r}(\hat{x}))},
\end{align*}
where $C_{\alpha_1}>0$ is a constant depending only on $n$, $p$, $c_1$, $c_2$, and $\alpha_1$. The proof is completed.
\end{proof}

\subsection{Mean oscillation estimates for large \texorpdfstring{$r$}{r}}\label{sec1.4}
Finally, we consider the case when $B_{r}(\Bar{x})$ could potentially intersects with both $\Gamma_+$ and $\Gamma_-$. In this case, we fix $\Bar{x}\in\Omega_{1/2}$ and assume  $\frac{c_4}{216} \underline{\delta}(\Bar{x})\leq r\leq c_5\underline{\delta}(\Bar{x})^{\frac{1}{2}}$, where $\underline{\delta}(\Bar{x})$ is defined in \eqref{def:delta}, $c_4$ is the same constant as in \eqref{def:c_4}, and $c_5$ is a constant which will be determined later.
 We define the map $\mathcal{Z}=\Tilde{\Lambda}_{\Bar{x}}(x)$ by
\begin{equation}\label{Lambda_T}
\left\{
\begin{aligned}
\mathcal{Z}' &= x',\\
\mathcal{Z}_n &= (h_1(\Bar{x}')-h_2(\Bar{x}')+\varepsilon)\Big(\frac{x_n-h_2(x')+{\varepsilon}/{2}}{h_1(x')-h_2(x')+\varepsilon}-\frac{1}{2}\Big).
\end{aligned}
\right.
\end{equation}
Thus $\Tilde{\Lambda}_{\Bar{x}}$ is invertible in ${\Omega}_1$,
$$
{Q}_1:=\Tilde{\Lambda}_{\Bar{x}}({\Omega}_1)=\big\{(\mathcal{Z}',\mathcal{Z}_n)\in \mathbb{R}^n: \,|\mathcal{Z}'|<1, \,|\mathcal{Z}_n|<\frac{1}{2}(h_1(\Bar{x}')-h_2(\Bar{x}')+\varepsilon)\big\},
$$
and
\begin{align*}
     \Tilde{\Gamma}_{\pm}
     :=\Tilde{\Lambda}_{\Bar{x}}(\Gamma_{\pm})
     =\big\{(\mathcal{Z}',\mathcal{Z}_n)\in \mathbb{R}^n: \,|\mathcal{Z}'|<1, \,\mathcal{Z}_n=\pm \frac{1}{2}(h_1(\Bar{x}')-h_2(\Bar{x}')+\varepsilon)\big\}.
\end{align*}
Then $u_2(\mathcal{Z}):=u(\Tilde{\Lambda}_{\Bar{x}}^{-1}(\mathcal{Z}))$ satisfies the following equation with constant Dirichlet boundary conditions
\begin{equation*}
    \left\{
\begin{aligned}
     -\dv_\mathcal{Z}\left(|B^T D_\mathcal{Z} u_2|^{p-2}(\text{det}( B))^{-1}BB^T  D_\mathcal{Z} u_2\right)=\;0 \quad  \text{in} \,\, {Q}_{1}&,\\
  u_2=\;U_1^{\varepsilon}\quad \text{on}\,\, \Tilde{\Gamma}_{+},\quad
   u_2=\;U_2^{\varepsilon}\quad \text{on}\,\, \Tilde{\Gamma}_{-}&,
\end{aligned}
\right.
\end{equation*}
where we denote
$$
B:=B_{\Bar{x}}:=B(\mathcal{Z}):=(b_{ij}(\mathcal{Z})):=D{\Tilde{\Lambda}_{\Bar{x}}}(\Tilde{\Lambda}_{\Bar{x}}^{-1}(\mathcal{Z})).
$$
For $\mathcal{Z}\in Q_{1}$, let $x=\Tilde{\Lambda}_{\Bar{x}}^{-1}(\mathcal{Z})$.
 Then
$$b_{ii}(\mathcal{Z})=1 \quad \text{for}  \;i\in\{1,2,\ldots,n-1\},$$
$$ b_{ij}(\mathcal{Z})=0 \quad \text{for} \; i\in\{1,2,\ldots,n-1\},\;j\in\{1,2,\ldots,n\},\; i\neq j,
$$
\begin{align*}
    b_{nj}(\mathcal{Z})&=\frac{h_1(\Bar{x}')-h_2(\Bar{x}')+\varepsilon}
    {(h_1(x')-h_2(x')+\varepsilon)^2}\\
    &\quad \cdot \big[D_{x_j}h_2(x')\big(x_n-h_1(x')-\frac{\varepsilon}{2}\big)-D_{x_j} h_1(x')\big(x_n-h_2(x')+\frac{\varepsilon}{2}\big)\big]
\end{align*}
for $j\in\{1,2,\ldots,n-1\}$, and
$$
b_{nn}(\mathcal{Z})=\frac{h_1(\Bar{x}')-h_2(\Bar{x}')+\varepsilon}{h_1(x')-h_2(x')+\varepsilon}.
$$
Therefore,
\begin{align*}
    \text{det}(B(\mathcal{Z}))=b_{nn}(\mathcal{Z})=\frac{h_1(\Bar{x}')-h_2(\Bar{x}')+\varepsilon}{h_1(\mathcal{Z}')-h_2(\mathcal{Z}')+\varepsilon}
\end{align*}
is a function independent of $\mathcal{Z}_n$.
Assume
\begin{equation}
                        \label{eq11.05}
\frac{c_4}{216} \underline{\delta}(\Bar{x})\leq r\leq \frac{1}{8}\underline{\delta}(\Bar{x})^{\frac{1}{2}}
\end{equation}
and let $\Bar{\mathcal{Z}}= \Tilde{\Lambda}_{\Bar{x}}(\Bar{x})$.
When $\mathcal{Z}\in\mathbb{R}^n$ satisfies $|\mathcal{Z}'-\Bar{\mathcal{Z}}'|\leq r$, by the triangle inequality and \eqref{eq11.05}, we always have
\begin{align}\label{Z_tan}
    |Z'|\leq |\Bar{\mathcal{Z}}'|+r \leq |\Bar{x}'|+r<1.
\end{align}
Then for any $\mathcal{Z}\in Q_{1}$ with $|\mathcal{Z}'-\Bar{\mathcal{Z}}'|\leq r$ and $x=\Tilde{\Lambda}_{\Bar{x}}^{-1}(\mathcal{Z})$, by the triangle inequality and \eqref{eq11.05}, we have
$$
|x'|\leq r+|\Bar{x}'|\leq \big(1+\sqrt{216/c_4}\big)r^{\frac{1}{2}} \quad \text{and} \quad |x'|^2\geq \frac{1}{2}|\Bar{x}'|^2-r^2\geq \frac{1}{4} (|\Bar{x}'|^2-\varepsilon).
$$
Thus, using \eqref{fg_0}, \eqref{fg_1}, and \eqref{def:c_2}, we infer that for $j=1,2,\ldots,n-1$ and some constants $C>0$ depending only on $n$, $p$, $c_1$, and $c_2$,
\begin{equation*}
    \begin{aligned}
    &|b_{nj}(\mathcal{Z})|\leq 2c_2\frac{|x'|(h_1(\Bar{x}')-h_2(\Bar{x}')+\varepsilon)}{h_1(x')-h_2(x')+\varepsilon}\leq 2c_2 \frac{|x'|(2c_2|\Bar{x}'|^2 +\varepsilon)}{c_1|x'|^2+\varepsilon}
    \\&\leq C|x'|\leq Cr^{\frac{1}{2}}
    \leq \frac{Cr}{\underline{\delta}(\Bar{x})^\frac{1}{2}},
\end{aligned}
\end{equation*}
\begin{equation}\label{B-I-comp}
    \begin{aligned}
    &|b_{nn}(\mathcal{Z})-1|=\Big|\frac{\int_0^1 \frac{d}{dt}\big(h_1(tx'+(1-t)\Bar{x}')-h_2(tx'+(1-t)\Bar{x}')\big)dt}{h_1(x')-h_2(x')+\varepsilon}\Big|\\
    &\leq 2c_2\frac{(|x'|+|\Bar{x}'|)|x'-\Bar{x}'|}{c_1|x'|^2+\varepsilon}\leq \frac{Cr}{\underline{\delta}(\Bar{x})^\frac{1}{2}},
\end{aligned}
\end{equation}
and similarly,
\begin{equation}\label{det-1}
\begin{aligned}
 &\big|\big(\text{det}(B(\mathcal{Z}))\big)^{-1}-1\big|=\big|\big(b_{nn}(\mathcal{Z})\big)^{-1}-1\big|\leq 
 \frac{Cr}{\underline{\delta}(\Bar{x})^\frac{1}{2}}.
\end{aligned}
\end{equation}
Therefore, when \eqref{eq11.05} holds and $\mathcal{Z}\in Q_{1}$ with $|\mathcal{Z}'-\Bar{\mathcal{Z}}'|\leq r$, we have for some constant $C=C(n,p,c_1,c_2)>0$,
\begin{equation}\label{co-diff3}
   |B(\mathcal{Z})-I_n|\leq \frac{Cr}{\underline{\delta}(\Bar{x})^\frac{1}{2}}.
\end{equation}
In particular, there exists a constant
\begin{equation}\label{def:c5}
    c_5=c_5(n,p,c_1,c_2)\in(0,1/8),
\end{equation}
such that if $\frac{c_4}{216} \underline{\delta}(\Bar{x})\leq c_5\underline{\delta}(\Bar{x})^{\frac{1}{2}}$
and $\mathcal{Z}\in Q_{1}$ with $|\mathcal{Z}'-\Bar{\mathcal{Z}}'|\leq c_5\underline{\delta}(\Bar{x})^{\frac{1}{2}}$, it also holds that
\begin{align}\label{co-diff4}
    |B(\mathcal{Z})-I_n|\leq 1/2\quad \text{and} \quad \big|(\text{det}(B(\mathcal{Z})))^{-1}-1\big|\leq 1/2.
\end{align}
Next we extend $u_2$ and ${B}$ to the whole cylinder $\mathcal{C}_{1}:=\{(\mathcal{Z}',\mathcal{Z}_n)\in \mathbb{R}^n: \,|\mathcal{Z}'|<1\}$. We denote $H:=h_1(\Bar{x}')-h_2(\Bar{x}')+\varepsilon$.
We take the even extension of $b_{nn}$, and $b_{ij}, i,j = 1,2,\ldots, n-1,$ with respect to $\mathcal{Z}_n =H/2$, and take the odd extension of $b_{in}$ and $b_{ni}, i = 1,2,\ldots, n-1,$ with respect to $\mathcal{Z}_n =H/2$. Then we take the periodic extension of $B$ in  the $\mathcal{Z}_n$ axis, so that the period is equal to $2H$.
Then we inductively reflect $u_2$ with respect to $\pm(k+\frac{1}{2})H$ for $k\in \mathbb{N}$. Namely, for $\mathcal{Z}\in\mathcal{C}_1$ and $k\in\mathbb{Z}$, we define
\begin{equation*}
   u_2(\mathcal{Z}):=\left\{
   \begin{aligned}
       & 2kU_1^{\varepsilon}-2kU_2^{\varepsilon}+u_2(\mathcal{Z}_n-2kH), && \mbox{if }|\mathcal{Z}_n-2kH|\leq  \frac{H}{2},\\
       & (2k+2)U_1^{\varepsilon}-2kU_2^{\varepsilon}-u_2((2k+1)H-\mathcal{Z}_n), && \mbox{if } |\mathcal{Z}_n-(2k+1)H|\leq  \frac{H}{2}.
    \end{aligned}\right.
\end{equation*} Then because of the Dirichlet boundary conditions, it is easily seen that $u_2$ satisfies
\begin{equation}\label{eq:u3}
 -\dv_\mathcal{Z}\big(\mathbf{B}(\mathcal{Z},D_\mathcal{Z} u_2)\big)=\;0 \quad  \text{in} \;\mathcal{C}_{1},
\end{equation}
where the nonlinear operator $\mathbf{B}$ is defined as
$$
\mathbf{B}(\mathcal{Z},\xi)= (\text{det}(B(\mathcal{Z})))^{-1}| B^T \xi|^{p-2} BB^T\xi  \quad \quad \text{for} \,\, \mathcal{Z}\in\mathcal{C}_{1},\,\xi\in \mathbb{R}^n,
$$
and
$$(\text{det}(B(\mathcal{Z})))^{-1}=\big(b_{nn}(\mathcal{Z})\big)^{-1}=\frac{h_1(\mathcal{Z}')-h_2(\mathcal{Z}')+\varepsilon}{h_1(\Bar{x}')-h_2(\Bar{x}')+\varepsilon}.
$$
Similar to \eqref{co-diff2}, using \eqref{Z_tan}, \eqref{det-1}, \eqref{co-diff3}, \eqref{co-diff4}, and \eqref{Z_tan}, we obtain that  for any $r\in \big[\frac{c_4}{216} \underline{\delta}(\Bar{x}),c_5\underline{\delta}(\Bar{x})^{\frac{1}{2}}\big]$, $\mathcal{Z}\in B_{r}(\Bar{\mathcal{Z}})$, and $\xi\in\mathbb{R}^n$,
\begin{align}\label{co-diff5}
    |\mathbf{B}(\mathcal{Z},\xi)-|\xi|^{p-2}\xi|\leq \frac{Cr}{\underline{\delta}(\Bar{x})^\frac{1}{2}}|\xi|^{p-1},
\end{align}
where $C>0$ is a constant depending only on $n$, $p$, $c_1$, and $c_2$.
Now we let $v_2\in u_2+W_0^{1,p}(B_r(\Bar{\mathcal{Z}}))$ be the unique solution to

\begin{equation*}
    \left\{
\begin{aligned}
     -\dv_\mathcal{Z}\big(|D_\mathcal{Z} v_2|^{p-2} D_\mathcal{Z} v_2\big) =\;&  0 \quad \;\;\text{in} \,\, B_{r}(\Bar{\mathcal{Z}}),  \\
     v_2 =\; &  u_2 \quad \text{on}\,\, \partial B_{r}(\Bar{\mathcal{Z}}).
\end{aligned}
\right.
\end{equation*}
Using \eqref{co-diff5}, similar to \eqref{comp1}, we have the following comparison estimate
\begin{align}\label{comp2}
    \fint_{B_r(\Bar{\mathcal{Z}})}|D_\mathcal{Z} u_2-D_\mathcal{Z} v_2|^p\leq C\Big(\frac{r}{\underline{\delta}(\Bar{x})^\frac{1}{2}}\Big)^{\min\{2,p\}}\fint_{B_{r}(\Bar{\mathcal{Z}})}|D_\mathcal{Z} u_2|^p,
\end{align}
where $C>0$ is a constant depending only on $n$, $p$, $c_1$, and $c_2$.

For $\Bar{x}\in \Omega_{1/2}$ and $r\in (0, \frac{1}{8}\underline{\delta}(\Bar{x})^{\frac{1}{2}})$, we define
\begin{align}\label{def:phi2}
\Tilde{\phi}(\Bar{x},r)=\left(\fint_{B_{r}(\Bar{\mathcal{Z}})}|D_{\mathcal{Z}} u_2-(D_{\mathcal{Z}} u_2)_{B_r(\Bar{\mathcal{Z}})}|^{p}\right)^{1/p}.
\end{align}
Then following the same proofs as those of Lemma \ref{lem:mean2} and Corollary \ref{lem:iter1} with \eqref{comp2} in place of \eqref{comp1}, we have
\begin{lemma}\label{lem:mean3}
Suppose that $\Bar{x}\in\Omega_{1/2}$ and  $u_2$ is a solution to \eqref{eq:u3}. Then there exist constants $C>0$ depending only on $n$, $p$, $c_1$, and $c_2$, and $C_\mu>0$ depending on $n$, $p$, $c_1$, $c_2$, and $\mu$, such that for any $\mu\in(0,1)$ and $r\in[\frac{c_4}{216}\underline{\delta}(\Bar{x}),\,c_5\underline{\delta}(\Bar{x})^{\frac{1}{2}}]$, it holds that
\begin{equation*}
\begin{aligned}
\Tilde{\phi}(\Bar{x},\mu r)\leq C\mu^\alpha \Tilde{\phi}(\Bar{x},r)+C_\mu\Big(\frac{ r}{\underline{\delta}(\Bar{x})^\frac{1}{2}}\Big)^{\theta_p}\left(\fint_{B_{r}(\Bar{\mathcal{Z}})}|D_\mathcal{Z} u_2|^{p}\right)^{1/p},
\end{aligned}
\end{equation*}
where $\theta_p=\min\{1,2/p\}$, $\alpha$ is the same constant as in Lemma \ref{lem:mean1}, $c_4$, $c_5$ are the same constants as in \eqref{def:c_4} and \eqref{def:c5}, and $\Tilde{\phi}$ is defined in \eqref{def:phi2}.
Moreover, for any $\alpha_1\in (0,\min\{\alpha, \theta_p\})$, there exists a constant  $C>0$ depending only on $n$, $p$, $c_1$, $c_2$, and $\alpha_1$, such that for any  $\frac{c_4}{216}\underline{\delta}(\Bar{x})\leq\rho\leq r\leq c_5\underline{\delta}(\Bar{x})^{\frac{1}{2}}$, it holds that
\begin{equation}\label{iter:int3}
\begin{aligned}
    \Tilde{\phi}(\Bar{x},\rho)&\leq  C\left(\frac{\rho}{r}\right)^{\alpha_1} \Tilde{\phi}(\Bar{x},r)+C\,\Big(\frac{\rho}{\underline{\delta}(\Bar{x})^\frac{1}{2}}\Big)^{\alpha_1}\|D_\mathcal{Z} u_2\|_{L^\infty({B_{r}(\Bar{\mathcal{Z}})})}.
\end{aligned}
\end{equation}
\end{lemma}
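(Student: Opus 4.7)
The plan is to mimic, step for step, the arguments for Lemma \ref{lem:mean2} and Corollary \ref{lem:iter1}, with the rescaled comparison estimate \eqref{comp2} replacing \eqref{comp1}. The key point is that, thanks to the change of variables $\tilde\Lambda_{\bar x}$ and the periodic-reflection extension described in Section \ref{sec1.4}, the function $u_2$ satisfies \eqref{eq:u3} on the whole of $B_r(\bar{\mathcal Z})\subset \mathcal{C}_1$, with a nonlinearity $\mathbf{B}(\mathcal{Z},\xi)$ differing from the pure $p$-Laplacian only by a factor of order $r/\underline{\delta}(\bar x)^{1/2}$, as quantified by \eqref{co-diff5}. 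This smallness is precisely what drives the error term in the lemma.

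For the first inequality, I fix $\mu\in(0,1)$ and $r\in[\frac{c_4}{216}\underline{\delta}(\bar x),\,c_5\underline{\delta}(\bar x)^{1/2}]$, and let $v_2$ be the $p$-harmonic replacement of $u_2$ on $B_r(\bar{\mathcal Z})$. Following the triangle-inequality chain displayed in \eqref{ineq:phi}, I bound $\tilde\phi(\bar x,\mu r)$ by the corresponding oscillation of $v_2$ on $B_{\mu r}(\bar{\mathcal Z})$ plus a comparison error of size at most $\mu^{-n/p}\bigl(\fint_{B_r(\bar{\mathcal Z})}|D_\mathcal{Z}(u_2-v_2)|^p\bigr)^{1/p}$. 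Since $v_2$ is genuinely $p$-harmonic in $B_r(\bar{\mathcal Z})$, Lemma \ref{lem:mean1} produces the gain $C\mu^{\alpha}$ on its $L^p$-mean oscillation, while \eqref{comp2} converts the comparison error into $C\mu^{-n/p}(r/\underline{\delta}(\bar x)^{1/2})^{\theta_p}\bigl(\fint_{B_r(\bar{\mathcal Z})}|D_\mathcal{Z} u_2|^p\bigr)^{1/p}$. Setting $C_\mu=C\mu^{-n/p}$ yields the first claim.

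For \eqref{iter:int3}, I fix $\alpha_1\in(0,\min\{\alpha,\theta_p\})$ and choose $\mu=\mu(n,p,c_1,c_2,\alpha_1)\in(0,1)$ so small that $C\mu^{\alpha}\le\mu^{\alpha_1}$, where $C$ is the constant from the one-step bound just established. Then $\tilde\phi(\bar x,\mu r)\le\mu^{\alpha_1}\tilde\phi(\bar x,r)+C_{\alpha_1}(r/\underline{\delta}(\bar x)^{1/2})^{\theta_p}\|D_\mathcal{Z} u_2\|_{L^\infty(B_r(\bar{\mathcal Z}))}$ for every admissible $r$. Iterating this $j$ times exactly as in \eqref{iter:int1}, and using $\alpha_1<\theta_p$ together with $r/\underline{\delta}(\bar x)^{1/2}<c_5<1$ to telescope the geometric series, gives
\[
\tilde\phi(\bar x,\mu^j r)\le\mu^{\alpha_1 j}\tilde\phi(\bar x,r)+C_{\alpha_1}\bigl(\mu^j r/\underline{\delta}(\bar x)^{1/2}\bigr)^{\alpha_1}\|D_\mathcal{Z} u_2\|_{L^\infty(B_r(\bar{\mathcal Z}))}.
\]
Given any $\frac{c_4}{216}\underline{\delta}(\bar x)\le\rho\le r\le c_5\underline{\delta}(\bar x)^{1/2}$, I pick the integer $j$ with $\mu^{j+1}<\rho/r\le\mu^j$ and apply the iterated bound with $\mu^{-j}\rho$ (which is comparable to $r$) in place of $r$ to obtain \eqref{iter:int3}.

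The main technical obstacle, as in Corollary \ref{lem:iter1}, is that $C_\mu$ blows up as $\mu\to 0$, which would break a naive iteration; the standard remedy is to relax the exponent from $\alpha$ to $\alpha_1<\alpha$ and absorb $C$ into $\mu^{\alpha-\alpha_1}$. A secondary, genuinely new, point is that the admissible range of $r$ no longer reaches $0$ but is truncated at $\frac{c_4}{216}\underline{\delta}(\bar x)$; this is why the iteration is stated only for $\rho\ge\frac{c_4}{216}\underline{\delta}(\bar x)$, and why below that threshold one must hand off to Lemma \ref{lem:mean2}, applied around a boundary point close to $\bar x$.
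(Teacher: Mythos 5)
Your proof is correct and follows precisely the route the paper intends: the authors themselves only say that Lemma \ref{lem:mean3} follows by ``the same proofs as those of Lemma \ref{lem:mean2} and Corollary \ref{lem:iter1} with \eqref{comp2} in place of \eqref{comp1},'' and you have faithfully fleshed out exactly that argument, including the correct handling of the lower truncation $\rho\ge\frac{c_4}{216}\underline{\delta}(\bar x)$ in the iteration and the role of $r/\underline{\delta}(\bar x)^{1/2}<c_5<1$ in telescoping the geometric series.
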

\subsection{Mean oscillation decay estimates}
Now we deduce mean oscillation decay estimates by connecting the three different cases of radii $r$, when $\underline{\delta}(\Bar{x})$ is sufficiently small.
We let
\begin{equation}\label{def:alpha1}
    \alpha_1=\frac{1}{2}\min\{\alpha, \frac{2}{p}\}
\end{equation}
and assume
\begin{equation}\label{small-assump}
    \underline{\delta}(\Bar{x})\leq \min\big\{(\frac{c_5}{10c_4})^2,\frac{1}{4+4c_2}\big\},
\end{equation}
where $\alpha$, $c_4$, and $c_5$ are the same constants as in Lemmas \ref{lem:mean1}, \ref{lem:mean2}, and \ref{lem:mean3}, respectively.

Let $\Bar{x}\in \Omega_{1/2}$.
By \eqref{fg_0}, \eqref{def:c_2} and \eqref{small-assump}, we have
\begin{equation}\label{dist-bound}
    \dist(\Bar{x},\Gamma_+\cup \Gamma_-)\leq \frac{1}{2} (h_1(\Bar{x}')-h_2(\Bar{x}')+\varepsilon)\leq c_2|\Bar{x}'|^2+\varepsilon\leq 1/4.
\end{equation}and thus
\begin{equation}\label{dist-int}
    \dist(\Bar{x},\Gamma_+\cup \Gamma_-)=\dist(\Bar{x},\partial \Omega_1).
\end{equation}
\begin{lemma}\label{lem:iter}
Let $\Bar{x}\in\Omega_{1/2}$,  $u$ be a solution to \eqref{equinfty}, and $r=\frac{1}{8}\underline{\delta}(\Bar{x})^{1/2}$.
 Let $\phi$ and $\Tilde{\phi}$ be defined as in \eqref{def:phi} and \eqref{def:phi2} and let $\alpha_1$ and $c_4$ be the same constants as in \eqref{def:alpha1} and \eqref{def:c_4}. Assume that \eqref{small-assump} holds. Then there exists a constant $C>0$  depending only on  $n$, $p$, $c_1$, and $c_2$, such that the following holds:
\begin{itemize}
    \item[(i)] For any $\rho\in(0,\frac{c_4}{6}\underline{\delta}(\Bar{x})]$,
    \begin{equation}\label{ineq:global1}
    \phi(\Bar{x},\rho)\leq  C\left(\frac{\rho}{r}\right)^{\alpha_1}\|D u\|_{L^{\infty}(\Omega_{\Bar{x}, r})}.
\end{equation}
   \item[(ii)] For any  $\rho\in[\frac{c_4}{216}\underline{\delta}(\Bar{x}),r]$,
   \begin{equation}\label{ineq:global2}
    \Tilde{\phi}(\Bar{x},\rho)\leq  C\left(\frac{\rho}{r}\right)^{\alpha_1}\|D u\|_{L^{\infty}(\Omega_{\Bar{x}, r})}.
\end{equation}
\end{itemize}
\end{lemma}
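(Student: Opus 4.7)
The plan is to first establish (ii) directly from the iteration bound \eqref{iter:int3} of Lemma \ref{lem:mean3}, then derive (i) by combining (ii) at a bridging scale $\rho_0 := \frac{c_4}{216}\underline{\delta}(\bar{x})$ with further iteration using interior (Lemma \ref{lem:mean1}) or boundary (Corollary \ref{lem:iter1}) mean oscillation estimates, depending on the geometry of the ball $B_\rho(\bar{x})$.

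For part (ii), I would apply \eqref{iter:int3} with starting radius $r = \frac{1}{8}\underline{\delta}(\bar{x})^{1/2}$. The initial term $\tilde{\phi}(\bar{x},r)$ is trivially bounded by $2\|D_{\mathcal{Z}}u_2\|_{L^\infty(B_r(\bar{\mathcal{Z}}))}$, and via the chain rule together with \eqref{co-diff4} and the fact that the reflection extension preserves $\|D_{\mathcal{Z}}u_2\|_{L^\infty}$, one has $\|D_{\mathcal{Z}}u_2\|_{L^\infty(B_r(\bar{\mathcal{Z}}))} \leq C\|Du\|_{L^\infty(\Omega_{\bar{x},r})}$. Since $r = \frac{1}{8}\underline{\delta}(\bar{x})^{1/2}$, the second factor $(\rho/\underline{\delta}(\bar{x})^{1/2})^{\alpha_1}$ in \eqref{iter:int3} equals $(8\rho/r)^{\alpha_1}/8^{\alpha_1}$, which is comparable to $(\rho/r)^{\alpha_1}$. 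Hence both terms on the right of \eqref{iter:int3} collapse into the single bound $C(\rho/r)^{\alpha_1}\|Du\|_{L^\infty(\Omega_{\bar{x},r})}$, yielding (ii).

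For part (i), I would split $\rho\in(0,\frac{c_4}{6}\underline{\delta}(\bar{x})]$ at $\rho_0$. When $\rho\in[\rho_0,\frac{c_4}{6}\underline{\delta}(\bar{x})]$, the bound (ii) applies, and I would pass from $\tilde{\phi}$ to $\phi$ via the change of variables $\tilde{\Lambda}_{\bar{x}}$: using $Du = D_{\mathcal{Z}}u_2\cdot B$ at corresponding points, together with the uniform bound $|B(\mathcal{Z})-B(\bar{\mathcal{Z}})|\leq C\rho/\underline{\delta}(\bar{x})^{1/2}$ on $B_\rho(\bar{\mathcal{Z}})$ from \eqref{co-diff3} and $|\det B - 1|\leq 1/2$ from \eqref{co-diff4}, one obtains $\phi(\bar{x},\rho) \leq C\tilde{\phi}(\bar{x},\rho) + C(\rho/\underline{\delta}(\bar{x})^{1/2})\|Du\|_{L^\infty(\Omega_{\bar{x},r})}$, and the error term is absorbed since $\rho/\underline{\delta}(\bar{x})^{1/2} = \rho/(8r) \leq (\rho/r)^{\alpha_1}$ (using $\rho/r\leq 1$ and $\alpha_1\leq 1$). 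When $\rho\in(0,\rho_0)$, the argument branches on whether $B_{\rho_0}(\bar{x})\subset\Omega_1$: in the interior case, Lemma \ref{lem:mean1} gives $\phi(\bar{x},\rho)\leq C(\rho/\rho_0)^\alpha \phi(\bar{x},\rho_0)$, and combining with the bound at $\rho_0$ from the previous step, together with $(\rho/\rho_0)^\alpha(\rho_0/r)^{\alpha_1}\leq(\rho/r)^{\alpha_1}$ (valid since $\alpha\geq\alpha_1$ and $\rho\leq\rho_0$), yields the conclusion. In the boundary case, I would choose the closest point $\hat{x}\in\Gamma_+\cup\Gamma_-$ to $\bar{x}$; a direct computation using \eqref{fg_0}--\eqref{def:c_2} shows $\underline{\delta}(\hat{x})\sim\underline{\delta}(\bar{x})$ and $|\hat{x}-\bar{x}|\leq C\underline{\delta}(\bar{x})$, so $\rho_0\leq c_4\underline{\delta}(\hat{x})$ and Corollary \ref{lem:iter1} applies at $\hat{x}$ from $r'=\rho_0$ down to $\rho+|\hat{x}-\bar{x}|\sim\rho$. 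The final transfer of $\psi(\hat{x},\cdot)$ back to $\phi(\bar{x},\rho)$ proceeds via the volume comparison $\Omega_\rho(\bar{x})\subset\Omega_{\rho+|\hat{x}-\bar{x}|}(\hat{x})$ and the pointwise inequality $\phi\leq C\psi$.

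The main obstacle will be securing the sharp starting bound $\psi(\hat{x},\rho_0)\leq C(\rho_0/r)^{\alpha_1}\|Du\|_{L^\infty(\Omega_{\bar{x},r})}$ in the boundary subcase of (i): a crude bound $\psi(\hat{x},\rho_0)\leq C\|Du\|_{L^\infty}$ only produces $(\rho/\rho_0)^{\alpha_1}=(\rho/\underline{\delta}(\bar{x}))^{\alpha_1}$ decay after iterating with Corollary \ref{lem:iter1}, which is strictly weaker than the target $(\rho/r)^{\alpha_1}=(\rho/\underline{\delta}(\bar{x})^{1/2})^{\alpha_1}$. This sharper bound relies on the $\phi$-version of (ii) at scale $\rho_0$ together with the comparability $\psi(\hat{x},\rho_0)\lesssim \phi(\bar{x},C\rho_0) + |(D_{y'}u)_{\Omega_{\rho_0}(\hat{x})}|$, where the averaged tangential gradient is controlled via the $C^{1,\alpha}$ regularity of $u$ combined with the vanishing of $D_{y'}u$ on $\Gamma_+\cup\Gamma_-$ (giving $|(D_{y'}u)_{\Omega_{\rho_0}(\hat{x})}|\leq C\rho_0^\alpha\|Du\|_{L^\infty}$, which is absorbed into $(\rho_0/r)^{\alpha_1}\|Du\|_{L^\infty}$ via the constraint $\alpha_1\leq\alpha/2$ from \eqref{def:alpha1}).
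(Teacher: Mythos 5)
Your overall architecture matches the paper's: establish (ii) from Lemma \ref{lem:mean3}, then derive (i) by bridging through a change of variables and iterating the interior (Lemma \ref{lem:mean1}) or boundary (Corollary \ref{lem:iter1}) mean-oscillation estimates. However, there is a genuine gap in the case analysis for (i), and a minor misapplication in (ii).

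The gap in (i) lies in your ``boundary case.'' You declare this to be the situation $B_{\rho_0}(\bar{x})\not\subset\Omega_1$, i.e.\ $d:=\dist(\bar{x},\Gamma_+\cup\Gamma_-)<\rho_0=\tfrac{c_4}{216}\underline{\delta}(\bar{x})$, and you then claim $\rho+|\hat{x}-\bar{x}|\sim\rho$, which you use to transfer $\psi(\hat{x},\cdot)$ back to $\phi(\bar{x},\rho)$ via the inclusion $\Omega_\rho(\bar{x})\subset\Omega_{\rho+|\hat{x}-\bar{x}|}(\hat{x})$. But $|\hat{x}-\bar{x}|=d$, and in this case one only knows $d<\rho_0$; there is no constraint keeping $\rho$ from being arbitrarily small compared to $d$. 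When $\rho\ll d$, the volume ratio $|\Omega_{\rho+d}(\hat{x})|/|\Omega_\rho(\bar{x})|\sim((\rho+d)/\rho)^n$ is unbounded, so the $L^p$-average transfer (``$\phi\leq C\psi$'') collapses. In other words, the subcase $\rho<d<\rho_0$ is unhandled. The paper resolves exactly this by inserting an extra branch (its Case~2): when $\rho<d$, it first iterates the interior Lemma \ref{lem:mean1} over $B_d(\bar{x})\subset\Omega_1$ to get $\phi(\bar{x},\rho)\leq C(\rho/d)^{\alpha_1}\phi(\bar{x},d)$, and only at scale $d$ (where the ball just touches the boundary, so all radii are comparable) does it pass to $\hat{x}$ and apply Corollary \ref{lem:iter1}. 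You need this intermediate interior iteration; without it the boundary-case transfer fails.

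A second, smaller issue in (ii): you propose applying \eqref{iter:int3} ``with starting radius $r=\tfrac18\underline{\delta}(\bar{x})^{1/2}$,'' but \eqref{iter:int3} is only stated for radii up to $c_5\underline{\delta}(\bar{x})^{1/2}$, and $c_5<1/8$. The paper handles this by observing that for $\rho\in[c_5\underline{\delta}(\bar{x})^{1/2},r]$ the inequality \eqref{ineq:global2} is trivially true (the ratio $\rho/r\geq 8c_5$ is bounded below, so the trivial bound $\Tilde{\phi}\leq 2\|D_{\mathcal{Z}}u_2\|_{L^\infty}$ suffices), and only applies \eqref{iter:int3} on $[\tfrac{c_4}{216}\underline{\delta}(\bar{x}),c_5\underline{\delta}(\bar{x})^{1/2})$. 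Your argument should be organized the same way.

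Apart from these, your intermediate-scale transfer from $\tilde{\phi}$ to $\phi$ for $\rho\in[\rho_0,\tfrac{c_4}{6}\underline{\delta}(\bar{x})]$, the use of $\alpha_1\leq\alpha$ to combine exponents, and the observation that the crude bound $\psi(\hat{x},\rho_0)\leq C\|Du\|_{L^\infty}$ is too weak (requiring instead the bound at scale $\rho_0$ obtained from (ii)), are all in line with the paper's proof, just organized around a different split point.
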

\begin{proof}
First, we prove assertion (ii). Note that $B_r(\Bar{\mathcal{Z}})\cap Q_1\subset \Tilde{\Lambda}_{\Bar{x}}(\Omega_{\Bar{x},r})$. By the definition of the extended solution $u_2$, \eqref{ineq:global2} clearly holds for $\rho\in [c_5\underline{\delta}(\Bar{x})^{1/2},r]$, where $c_5\in(0,1/8)$ is the same constant as in \eqref{def:c5}. On the other hand, \eqref{iter:int3} directly implies
\eqref{ineq:global2} for $\rho\in [\frac{c_4}{216}\underline{\delta}(\Bar{x}), c_5\underline{\delta}(\Bar{x})^{1/2})$.

Next, we give the proof of assertion (i). We consider the following three cases:
\begin{equation*}
    \begin{aligned}
        &\dist(\Bar{x},\Gamma_+\cup \Gamma_-)\leq \rho\leq \frac{c_4}{6}\underline{\delta}(\Bar{x}),\\
        &\rho<\dist(\Bar{x},\Gamma_+\cup \Gamma_-)\leq \frac{c_4}{6}\underline{\delta}(\Bar{x}),\\
        &\rho\leq \frac{c_4}{6}\underline{\delta}(\Bar{x})<\dist(\Bar{x},\Gamma_+\cup \Gamma_-).
    \end{aligned}
\end{equation*}

{\em Case 1: $\dist(\Bar{x},\Gamma_+\cup \Gamma_-)\leq \rho\leq \frac{c_4}{6}\underline{\delta}(\Bar{x})$.}
Since $\Bar{x}\in\Omega_{1/2}$, by \eqref{dist-bound} and the triangle inequality, we can choose $\hat{x}\in \Gamma_+\cup\Gamma_-$ with $|\hat{x}|\leq 3/4$, such that $\dist(\Bar{x},\Gamma_+\cup\Gamma_-)=|\hat{x}-\Bar{x}|$, and thus $\Omega_{\rho}(\Bar{x})\subset \Omega_{2\rho}(\hat{x})\subset \Omega_{3\rho}(\Bar{x})$.

Since $|\hat{x}-\Bar{x}|\leq \rho\leq \frac{c_4}{6}\underline{\delta}(\Bar{x})$, by the triangle inequality, we have
\begin{align*}
    |\hat{x}'|^2\leq  2|\Bar{x}'|^2+2\big(\frac{c_4}{6}\big)^2\underline{\delta}(\Bar{x})^2, \quad |\Bar{x}'|^2\leq  2|\hat{x}'|^2+2\big(\frac{c_4}{6}\big)^2\underline{\delta}(\Bar{x})^2,
\end{align*}
which also implies
\begin{align}\label{deltahat}
   \frac{1}{3}\underline{\delta}(\Bar{x})\leq \underline{\delta}(\hat{x}) \leq 3\underline{\delta}(\Bar{x})
\end{align}
 since $c_4\in (0,1)$.
 By \eqref{deltahat} and the fact that $|\hat{x}-\Bar{x}|\leq\rho\leq \frac{c_4}{6}\underline{\delta}(\Bar{x})$, we also have
\begin{align}\label{cond:mean21}
    2|\hat{x}-\Bar{x}|\leq 2\rho\leq c_4\underline{\delta}(\hat{x}).
\end{align}
Let $R_1=c_4\underline{\delta}(\hat{x})$. Then $\Omega_{R_1}(\hat{x})\subset \Omega_{\frac{3}{2}R_1}(\Bar{x})$.
Thus we can apply Corollary \ref{lem:iter1} at
 $\hat{x}\in(\Gamma_+\cup\Gamma_-)\cap\{x\in\mathbb{R}^n: |x'|\leq 3/4\}$ and use \eqref{volume1} to obtain
 \begin{equation}\label{case1-1}
 \begin{aligned}
      &\phi(\Bar{x},\rho)\leq C\psi(\hat{x},2\rho)\leq  C\left(\frac{\rho}{R_1}\right)^{\alpha_1} \psi(\hat{x},R_1)+C\rho^{\alpha_1}\|Du\|_{L^\infty(\Omega_{R_1}(\hat{x}))}\\
      &\leq C\left(\frac{\rho}{R_1}\right)^{\alpha_1} \psi(\hat{x},R_1)+C\rho^{\alpha_1}\|Du\|_{L^\infty(\Omega_{\frac{3}{2}R_1}(\Bar{x}))}.
 \end{aligned}
 \end{equation}
By using \eqref{rotation} and the change of variables $x\rightarrow y$, we have
\begin{equation}\label{case1-2}
\begin{aligned}
    &\psi(\hat{x}, R_1)\leq \left(\fint_{\Omega_{R_1}(\hat{x})}|D_{x'} u|^{p}+|D_{x_n}u-(D_{x_n} u)_{\Omega_{R_1}(\hat{x})}|^{p}\right)^{1/p}
    \\&\quad+C|\hat{x}'|\,\|Du\|_{L^\infty(\Omega_{R_1}(\hat{x}))}
    \\&\leq \left(\fint_{\Omega_{R_1}(\hat{x})}|D_{x'} u|^{p}+|D_{x_n}u-(D_{x_n} u)_{\Omega_{R_1}(\hat{x})}|^{p}\right)^{1/p}
    \\&\quad+CR_1^{1/2}\,\|Du\|_{L^\infty(\Omega_{\frac{3}{2}R_1}(\Bar{x}))}.
\end{aligned}
\end{equation}
By \eqref{deltahat}, \eqref{small-assump}, and the fact that  $c_5\in(0,1/8)$, it holds that
\begin{equation}\label{R1bound}
   \frac{c_4}{2}  \underline{\delta}(\Bar{x})\leq \frac{3}{2}R_1\leq  \frac{9c_4}{2}\underline{\delta}(\Bar{x})\leq \frac{c_5}{2} \underline{\delta}(\Bar{x})^{1/2}\leq \frac{1}{2}r.
\end{equation}
Since $\Omega_{R_1}(\hat{x})\subset \Omega_{\frac{3}{2}R_1}(\Bar{x})$, by using \eqref{volume1}, \eqref{det-1}--\eqref{co-diff4}, \eqref{R1bound}, the change of variables $x\rightarrow \mathcal{Z}$, and the triangle inequality, we also have
\begin{equation}
    \begin{aligned}\label{case1-3}
        &\left(\fint_{\Omega_{R_1}(\hat{x})}|D_{x'} u|^{p}+|D_{x_n}u-(D_{x_n} u)_{\Omega_{R_1}(\hat{x})}|^{p}\right)^{1/p}\\
        &\leq C\frac{|\Tilde{\Lambda}_{\Bar{x}}(\Omega_{R_1}(\hat{x}))|^{1/p}}{|\Omega_{R_1}(\hat{x})|^{1/p}}\left(\fint_{\Tilde{\Lambda}_{\Bar{x}}(\Omega_{R_1}(\hat{x}))}|D_{\mathcal{Z}'} u_2|^{p}+|D_{\mathcal{Z}_n}u_2-(D_{\mathcal{Z}_n} u_2)_{\Tilde{\Lambda}_{\Bar{x}}(\Omega_{R_1}(\hat{x}))}|^{p}\right)^{1/p}
        \\&\quad+ C\, \frac{R_1}{\underline{\delta}(\Bar{x})^{\frac{1}{2}}} \|Du\|_{L^\infty(\Omega_{\frac{3}{2}R_1}(\Bar{x}))}\\
        & \leq C\left(\fint_{\Tilde{\Lambda}_{\Bar{x}}(\Omega_{R_1}(\hat{x}))}|D_{\mathcal{Z}'} u_2|^{p}+|D_{\mathcal{Z}_n}u_2-(D_{\mathcal{Z}_n} u_2)_{\Tilde{\Lambda}_{\Bar{x}}(\Omega_{R_1}(\hat{x}))}|^{p}\right)^{1/p}
        \\&\quad+ CR_1^{1/2}\|Du\|_{L^\infty(\Omega_{\frac{3}{2}R_1}(\Bar{x}))}.
    \end{aligned}
\end{equation}
Without loss of generality, we assume $\hat{x}\in \Gamma_-$ and thus $\hat{\mathcal{Z}}:=\Tilde{\Lambda}_{\Bar{x}}(\hat{x})\in \Tilde{\Gamma}_-$.
We denote $B_{R}^+(\hat{\mathcal{Z}}):=B_{R}(\hat{\mathcal{Z}})\cap\{\mathcal{Z}\in\mathbb{R}^n: \mathcal{Z}_n>\hat{\mathcal{Z}}_n\}$ for any $R>0$.
  By \eqref{co-diff4} and \eqref{cond:mean21}, we have $B_{2R_1}(\hat{\mathcal{Z}})\subset B_{3R_1}(\Bar{\mathcal{Z}})\subset B_{r}(\Bar{\mathcal{Z}})\subset \mathcal{C}_{1}=\{(\mathcal{Z}',\mathcal{Z}_n)\in \mathbb{R}^n: \,|\mathcal{Z}'|<1\}$.
Since $c_4\leq \min\{c_1/8,1/8\}$, by \eqref{fg_1} and \eqref{deltahat}, we know that
$B_{2R_1}(\hat{\mathcal{Z}})\cap Q_{1}\equiv B_{2R_1}^+(\hat{\mathcal{Z}})$.
Again by \eqref{co-diff4}, we also have $ B^+_{R_1/2}(\hat{\mathcal{Z}})\subset\Tilde{\Lambda}_{\Bar{x}}(\Omega_{R_1}(\hat{x}))\subset B^+_{2R_1}(\hat{\mathcal{Z}})$. Therefore, by the triangle inequality and the definition of $u_2$,
\begin{equation}\label{case1-4}
\begin{aligned}
   &\left(\fint_{\Tilde{\Lambda}_{\Bar{x}}(\Omega_{R_1}(\hat{x}))}|D_{\mathcal{Z}'} u_2|^{p}+|D_{\mathcal{Z}_n}u_2-(D_{\mathcal{Z}_n} u_2)_{\Tilde{\Lambda}_{\Bar{x}}(\Omega_{R_1}(\hat{x}))}|^{p}\right)^{1/p}\\
   &\leq C\left(\fint_{B^+_{2R_1}(\hat{\mathcal{Z}})}|D_{\mathcal{Z}'} u_2|^{p}+|D_{\mathcal{Z}_n}u_2-(D_{\mathcal{Z}_n} u_2)_{B^+_{2R_1}(\hat{\mathcal{Z}})}|^{p}\right)^{1/p}\\
   &= C \left(\fint_{B_{2R_1}(\hat{\mathcal{Z}})}|D_{\mathcal{Z}}u_2-(D_{\mathcal{Z}} u_2)_{B_{2R_1}(\hat{\mathcal{Z}}_0)}|^{p}\right)^{1/p}
   \leq C \Tilde{\phi}(\Bar{x},3R_1).
\end{aligned}
\end{equation}
Combining \eqref{case1-1}, \eqref{case1-2}, \eqref{case1-3}, and \eqref{case1-4}, we have
\begin{equation}\label{case1-5}
\begin{aligned}
    &\phi(\Bar{x},\rho)
    \\&\leq C\left(\frac{\rho}{R_1}\right)^{\alpha_1} \Big(\Tilde{\phi}(\Bar{x},3R_1)+R_1^{1/2}\|Du\|_{L^\infty(\Omega_{\frac{3}{2}R_1}(\Bar{x}))}\Big)+C\,\rho^{\alpha_1}\|Du\|_{L^\infty(\Omega_{\frac{3}{2}R_1}(\Bar{x}))}.
\end{aligned}
\end{equation}
Note that by \eqref{deltahat}, $R_1^{-\alpha_1}R_1^{1/2}\leq R_1^{-\alpha_1/2}\leq Cr^{-\alpha_1}$. Thus \eqref{ineq:global2} with $3R_1$ in place of $\rho$ and \eqref{case1-5} directly imply \eqref{ineq:global1}.

{\em Case 2: $\rho<\dist(\Bar{x},\Gamma_+\cup \Gamma_-)\leq \frac{c_4}{6}\underline{\delta}(\Bar{x})$.} Let $R_2=\dist(\Bar{x},\Gamma_+\cup \Gamma_-)$. Then we can apply the estimate \eqref{ineq:global1} in Case 1 with $R_2$ in place of $\rho$ to obtain
\begin{align}\label{case2-1}
    \phi(\Bar{x},R_2)\leq  C\left(\frac{R_2}{r}\right)^{\alpha_1}\|D u\|_{L^{\infty}(\Omega_{\Bar{x},r})}.
\end{align}
By \eqref{dist-int}, $B_{R_2}(\Bar{x})\subset \Omega_1$, and we can apply Lemma \ref{lem:mean1} to get
\begin{align}\label{case2-2}
    \phi(\Bar{x},\rho)\leq  C\left(\frac{\rho}{R_2}\right)^{\alpha_1} \phi(\Bar{x},R_2).
\end{align}
Combining \eqref{case2-1} and \eqref{case2-2} yields \eqref{ineq:global1}.

{\em Case 3: $\rho\leq \frac{c_4}{6}\underline{\delta}(\Bar{x})<\dist(\Bar{x},\Gamma_+\cup \Gamma_-)$.} Let $R_3=\frac{c_4}{6}\underline{\delta}(\Bar{x})$. Then by \eqref{dist-int}, $B_{R_3}(\Bar{x})\subset \Omega_1$.
By Lemma \ref{lem:mean1}, we have
\begin{align}\label{case3-1}
    \phi(\Bar{x},\rho)\leq  C\left(\frac{\rho}{R_3}\right)^{\alpha_1} \phi(\Bar{x},R_3).
\end{align}
Similar to Case 1, by using \eqref{det-1}--\eqref{co-diff4}, change of variables, and the triangle inequality, we obtain
\begin{equation}\label{case3-2}
\begin{aligned}
    \phi(\Bar{x},R_3) &\leq C\frac{|\Tilde{\Lambda}_{\Bar{x}}(B_{R_3}(\Bar{x}))|^{1/p}}{|B_{R_3}(\Bar{x})|^{1/p}}\left(\fint_{\Tilde{\Lambda}_{\Bar{x}}(B_{R_3}(\Bar{x}))}|D_{\mathcal{Z}} u_2-(D_{\mathcal{Z}} u_2)_{\Tilde{\Lambda}_{\Bar{x}}(B_{R_3}(\Bar{x}))}|^{p}\right)^{1/p}\\
    &\quad+
    C \,\frac{R_3}{\underline{\delta}(\Bar{x})^{\frac{1}{2}}} \|Du\|_{L^\infty(B_{R_3}(\Bar{x}))}
     \\&  \leq C\Tilde{\phi}(\Bar{x}, 2R_3)+ \frac{CR_3}{r} \, \|Du\|_{L^\infty(B_{R_3}(\Bar{x}))}.
\end{aligned}
\end{equation}
By \eqref{ineq:global2} with $2R_3$ in place of $\rho$, we also have
\begin{align}\label{case3-3}
    \Tilde{\phi}(\Bar{x},2R_3)\leq  C\left(\frac{R_3}{r}\right)^{\alpha_1}\|D u\|_{L^{\infty}(\Omega_{\Bar{x},r})}.
\end{align}
Combining \eqref{case3-1}, \eqref{case3-2}, and \eqref{case3-3} yields \eqref{ineq:global1}.
The proof is completed.
\end{proof}
  It is straightforward to see the following lower bound of $|\Omega_\rho(\Bar{x})|$ from the proof of Lemma \ref{lem:iter} (assertion (i), case 1) and \eqref{volume1}, which would be useful in the proof of Proposition \ref{thm-1/2}.
\begin{lemma}\label{lem:volume}
Let $\Bar{x}\in\Omega_{1/2}$ and $c_4=c_4(n,c_1,c_2)\in(0,1)$ be the same constant as in \eqref{def:c_4}. Assume that \eqref{small-assump} holds. Then there exists a constant $c>0$  depending only on  $n$, such that for any
$\rho\in(0,\frac{c_4}{6}\underline{\delta}(\Bar{x}))$, it holds that
\begin{equation*}
    |\Omega_\rho(\Bar{x})|\ge c\rho^n.
\end{equation*}
\end{lemma}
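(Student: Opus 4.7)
The plan is to split into two cases according to whether $\bar{x}$ is far from or close to the boundary $\Gamma_+\cup\Gamma_-$, using $\rho/2$ as the threshold; in either case I will exhibit a subset of $\Omega_\rho(\bar{x})$ whose volume is at least $c(n)\rho^n$. The two cases correspond exactly to the ``interior'' and ``boundary'' geometries, and Case 1 of the proof of Lemma \ref{lem:iter} already contains all the scale comparisons needed.

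Case 1 is the interior case, when $\dist(\bar{x},\Gamma_+\cup\Gamma_-)\ge \rho/2$. Identity \eqref{dist-int} from the paragraph preceding Lemma \ref{lem:iter} gives $\dist(\bar{x},\partial\Omega_1)=\dist(\bar{x},\Gamma_+\cup\Gamma_-)\ge \rho/2$, so $B_{\rho/2}(\bar{x})\subset \Omega_1\subset \widetilde{\Omega}^\varepsilon$. Since $B_{\rho/2}(\bar{x})\subset B_\rho(\bar{x})$, this immediately gives $|\Omega_\rho(\bar{x})|\ge |B_{\rho/2}(\bar{x})|=2^{-n}|B_1|\,\rho^n$.

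Case 2 is the near-boundary case, when $\dist(\bar{x},\Gamma_+\cup\Gamma_-)<\rho/2$. Pick a nearest boundary point $\hat{x}\in \Gamma_+\cup\Gamma_-$, so that $d:=|\hat{x}-\bar{x}|<\rho/2$. The strategy is to apply \eqref{volume1} at $\hat{x}$ with radius $\rho/2$ and then transfer the bound to $\Omega_\rho(\bar{x})$ via the triangle-inequality inclusion $B_{\rho/2}(\hat{x})\subset B_\rho(\bar{x})$ (which uses $d<\rho/2$). Two hypotheses of \eqref{volume1} need checking. First, $\hat{x}$ should lie in $\{|x'|\le 3/4\}$: since $|\bar{x}'|\le 1/2$ and \eqref{small-assump} combined with $\rho\le \frac{c_4}{6}\underline{\delta}(\bar{x})$ forces $d<\rho/2<1/4$, we get $|\hat{x}'|\le |\bar{x}'|+d\le 3/4$. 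Second, we need $\rho/2\le c_4\underline{\delta}(\hat{x})$: since $d<\rho/2\le \frac{c_4}{12}\underline{\delta}(\bar{x})$, the triangle-inequality computation \eqref{deltahat} in Case 1 of the proof of Lemma \ref{lem:iter} gives $\underline{\delta}(\hat{x})\ge \underline{\delta}(\bar{x})/3$, and hence $c_4\underline{\delta}(\hat{x})\ge \frac{c_4}{3}\underline{\delta}(\bar{x})\ge 2\rho>\rho/2$. With both hypotheses verified, \eqref{volume1} yields $|\Omega_{\rho/2}(\hat{x})|\ge c(n)(\rho/2)^n$, and the inclusion $\Omega_{\rho/2}(\hat{x})\subset \Omega_\rho(\bar{x})$ completes the bound.

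The only substantive point is the choice of threshold $\rho/2$ together with the inclusion $B_{\rho/2}(\hat{x})\subset B_\rho(\bar{x})$ in Case 2: a naive attempt that picks any boundary point $\hat{x}$ with $|\hat{x}-\bar{x}|<\rho$ and uses $B_{\rho-d}(\hat{x})\subset B_\rho(\bar{x})$ only yields the volume $(\rho-d)^n$, which degenerates as $d\uparrow \rho$ and would produce a constant depending on $c_1,c_2$ rather than $n$ alone. Splitting at the halfway scale $\rho/2$ sidesteps this and delivers the advertised dimensional constant. No other ingredient beyond \eqref{volume1}, \eqref{dist-int}, \eqref{deltahat}, and the smallness assumption \eqref{small-assump} is required.
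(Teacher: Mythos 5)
Your proof is correct, and it follows exactly the route the paper points to in its one-line justification (``straightforward to see from the proof of Lemma \ref{lem:iter} (assertion (i), case 1) and \eqref{volume1}''). The interior case uses \eqref{dist-int} to get a full ball, the boundary case reuses the comparison \eqref{deltahat} to verify the hypothesis $\rho/2\le c_4\underline{\delta}(\hat{x})$ of \eqref{volume1} and then transfers the bound via $B_{\rho/2}(\hat{x})\subset B_\rho(\bar{x})$; the dyadic split at $\rho/2$ is a clean way to make the transfer lossless, and the resulting constant is purely dimensional as required.
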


\subsection{Proof of Proposition \ref{thm-1/2}}
Now we are ready to prove Proposition \ref{thm-1/2}.

\begin{proof}[Proof of Proposition \ref{thm-1/2}]
Let $x_0\in\Omega_{1/4}$ and we prove the proposition around $x=x_0$.
We recall $\underline{\delta}(x_0)=\varepsilon+|x_0'|^2$. Let $x\in \Omega_{x_0,\sqrt{\underline{\delta}(x_0)}/4}$.
By the triangle inequality,
\begin{align*}
    |x'|^2\leq  2|x_0'|^2+\underline{\delta}(x_0)/8, \quad |x_0'|^2\leq 2|x'|^2+\underline{\delta}(x_0)/8.
\end{align*}
Therefore, for any $x\in \Omega_{x_0,\sqrt{\underline{\delta}(x_0)}/4}$, it holds that
\begin{align}\label{delta1}
    \frac{\underline{\delta}(x_0)}{3}\leq \underline{\delta}(x) \leq 3\underline{\delta}(x_0).
\end{align}
We denote
\begin{equation*}
    c_0:=\frac{1}{3}\min\big\{(\frac{c_5}{10c_4})^2,\frac{1}{4+4c_2}\big\},
\end{equation*}
where $c_4$ and $c_5$ are the same constants as in \eqref{def:c_4} and \eqref{def:c5}. Thus $c_0>0$ depends only on $n$, $p$, $c_1$, and $c_2$.
When $\underline{\delta}(x_0)>c_0$, by \eqref{delta1}, we can apply classical results of H\"older regularity of the gradient for the $p$-Laplace equation to get \eqref{gradient-1/2} with $x=x_0$, and some $\beta=\beta(n,p)\in(0,1)$ and $C=C(n,p,c_1,c_2)>0$.
Let $x_1, x_2\in\Omega_{x_0,\sqrt{\underline{\delta}(x_0)}/4}$ and we denote
$$\rho:=|x_1-x_2|.$$
It suffices to show that when $\underline{\delta}(x_0)\leq c_0$,
\begin{align}\label{c1alpha}
    |Du(x_1)-Du(x_2)|\leq C \underline{\delta}(x_0)^{-\alpha_1/2} \rho^{\alpha_1}\|Du\|_{L^\infty(\Omega_{x_0,\sqrt{\underline{\delta}(x_0)}/2})}
\end{align}
holds for some constant $C>0$ depending only on $n$, $p$, $c_1$, and $c_2$, where $\alpha_1\in(0,1)$ is the same constant as in \eqref{def:alpha1}.
From now on, we assume
\begin{equation}\label{deltasmall}
   \underline{\delta}(x_0)\leq c_0=\frac{1}{3}\min\big\{(\frac{c_5}{10c_4})^2,\frac{1}{4+4c_2}\big\}.
\end{equation}
By \eqref{delta1}, we have
\begin{align}\label{delta2}
    \frac{\underline{\delta}(x_0)}{3}\leq \underline{\delta}(x_1) \leq 3\underline{\delta}(x_0)\quad \mbox{and}\quad \frac{\underline{\delta}(x_0)}{3}\leq \underline{\delta}(x_2) \leq 3\underline{\delta}(x_0)
\end{align}
and therefore \eqref{deltasmall} also implies that
\eqref{small-assump} holds for $\Bar{x}=x_1, x_2\in \Omega_{x_0,\sqrt{\underline{\delta}(x_0)}/4}\subset \Omega_{1/2}$.
Thus, we can apply  both Lemmas \ref{lem:iter} and \ref{lem:volume} with $\Bar{x}=x_1$ and with $\Bar{x}=x_2$.

We consider two different cases: $\rho\leq \frac{c_4}{36}\underline{\delta}(x_0)$ and $\rho>\frac{c_4}{36}\underline{\delta}(x_0)$.

{\em Case 1: $\rho\leq \frac{c_4}{36}\underline{\delta}(x_0)$.}
By \eqref{delta2}, we also have
\begin{equation}\label{small-rho}
    \rho\leq \frac{c_4}{12}\underline{\delta}(x_1)\quad \mbox{and} \quad \rho\leq \frac{c_4}{12}\underline{\delta}(x_2).
\end{equation}
For any $x\in \Omega_\rho(x_2)$, by the triangle inequality
\begin{align*}
    | Du(x_1)- Du(x_2)|  \leq  &|Du(x_1)-(Du)_{\Omega_{2\rho}(x_1)}|+|Du(x_2)-(Du)_{\Omega_{\rho}(x_2)}|\\
    & +|D u(x)-(Du)_{\Omega_{2\rho}(x_1)}|+|Du(x)-(Du)_{\Omega_{\rho}(x_2)}|.
\end{align*}
We then take the $L^p$ average over $x\in \Omega_\rho(x_2)\subset\Omega_{2\rho}(x_1)$ and use Lemma \ref{lem:volume}, the Lebesgue differentiation theorem, and the triangle inequality to get
\begin{equation}\label{ddiff1}
\begin{aligned}
    &|Du(x_1)-Du(x_2)|\\
    &\leq  |Du(x_1)-(Du)_{\Omega_{2\rho}(x_1)}|+|Du(x_2)-(Du)_{\Omega_{\rho}(x_2)}|+C\phi(x_1,2\rho)+C\phi(x_2,\rho)\\
    &\leq C \sum_{j=0}^\infty \phi(x_1,2^{1-j}\rho)+C \sum_{j=0}^\infty \phi(x_2,2^{-j} \rho),
\end{aligned}
\end{equation}
where $\phi$ is the mean oscillation of $Du$ defined in \eqref{def:phi}.

By \eqref{delta2}  and the triangle inequality, we know that $\Omega_{x_1,r}\subset\Omega_{x_0,\sqrt{\underline{\delta}(x_0)}/2}\subset\Omega_1$, where $r=\frac{1}{8}\underline{\delta}(x_1)^{1/2}$.
Since \eqref{small-rho} holds, we can apply \eqref{ineq:global1} with $x_1$ in place of $\Bar{x}$ and $2^{1-j}\rho$ in place of $\rho$ to obtain
\begin{align}\label{sum1}
    &\sum_{j=0}^\infty \phi(x_1,2^{1-j} \rho)\leq C\sum_{j=0}^\infty \left(\frac{2^{1-j}\rho}{r}\right)^{\alpha_1}\|D u\|_{L^{\infty}(\Omega_{x_1,r})}\nonumber\\
    &\leq C\left(\frac{\rho}{r}\right)^{\alpha_1}\|D u\|_{L^{\infty}(\Omega_{x_0,\sqrt{\underline{\delta}(x_0)}/2})}  \le C\underline{\delta}(x_0)^{-\alpha_1/2}\rho^{\alpha_1}\|D u\|_{L^\infty(\Omega_{x_0,\sqrt{\underline{\delta}(x_0)}/2})}.
\end{align}
Similarly, we also have
\begin{align}\label{sum2}
    \sum_{j=0}^\infty \phi(x_2,2^{-j} \rho)\leq C\underline{\delta}(x_0)^{-\alpha_1/2}\rho^{\alpha_1}\|D u\|_{L^\infty(\Omega_{x_0,\sqrt{\underline{\delta}(x_0)}/2})}.
\end{align}
Combining \eqref{ddiff1}, \eqref{sum1}, and \eqref{sum2} yields \eqref{c1alpha}.

{\em Case 2: $\rho>\frac{c_4}{36} \underline{\delta}(x_0)$.}
By \eqref{delta2}, we also have
\begin{equation}\label{big-rho}
    \rho> \frac{c_4}{108}\underline{\delta}(x_1)\quad \mbox{and} \quad \rho> \frac{c_4}{108}\underline{\delta}(x_2)
\end{equation}
With $x_1$ in place of $\Bar{x}$ in Section \ref{sec1.4}, we denote the new coordinate by $\xi=\Tilde{\Lambda}_{x_1}(x)$ and set $\xi_1:=(\xi_1',\xi_1^n):=\Tilde{\Lambda}_{x_1}(x_1)$, where $\Tilde{\Lambda}_{x_1}$ is defined as in \eqref{Lambda_T}.
Similarly, with $x_2$ in place of $\Bar{x}$, we denote another coordinate by $\eta:=(\eta_2',\eta_2^n):=\Tilde{\Lambda}_{x_2}(x)$ and set $\eta_2=\Tilde{\Lambda}_{x_2}(x_2)$. Let $u_2^{(1)}$ and $u_2^{(2)}$ be the extended solutions in the coordinates $\xi$ and $\eta$ defined as in Section \ref{sec1.4}, respectively. As in Section \ref{sec1.4}, we also define the mean oscillation of extended solutions in the two coordinates $\xi$ and $\eta$ by
$$
\Tilde{\phi}(x_1,r)= \left(\fint_{B_{r}(\xi_1)}|D_{\xi} u_2^{(1)}-(D_{\xi} u_2^{(1)})_{B_r(\xi_1)}|^{p}\right)^{1/p}
$$
and
$$\Tilde{\phi}(x_2,r) =\left(\fint_{B_{r}(\eta_2)}|D_\eta u_2^{(2)}-(D_\eta u_2^{(2)})_{B_r(\eta_2)}|^{p}\right)^{1/p}.$$
Let us first briefly describe our ideas to prove \eqref{c1alpha} in this case.
By the triangle inequality,
\begin{equation}\label{ddiff2}
\begin{aligned}
    |Du(x_1)-Du(x_2)|
    &\leq  |Du(x_1)-(D_\xi u_2^{(1)})_{B_{c_6\rho}(\xi_1)}|+|Du(x_2)-(D_\eta u_2^{(2)})_{B_{\rho}(\eta_2)}|
    \\&\quad+|(D_\xi u_2^{(1)})_{B_{c_6\rho}(\xi_1)}-(D_\eta u_2^{(2)})_{B_{\rho}(\eta_2)}|,
\end{aligned}
\end{equation}
where $c_6>0$ is a constant which will be specified later. We will estimate the third term on the right hand side of \eqref{ddiff2} using careful change of variables $\xi\to\eta$ and estimate the first two terms using iteration of Lemma \ref{lem:iter}. A delicate transition from the original coordinates $x$ to the new coordinates $\xi$ (or $\eta$) is also needed when the radius (inside the iteration procedure) is at the scale of $\underline{\delta}(x_0)$.

Note that by \eqref{Lambda_T},
$\Phi:=\Tilde{\Lambda}_{x_2}\Tilde{\Lambda}_{x_1}^{-1}$ is indeed a dilation of the coordinate $\xi$ in the $\xi^n$ direction, namely,
$$
\eta=(\eta',\eta^n) = \Phi(\xi)=\big(\xi',\; \frac{h_1(x_2')-h_2(x_2')+\varepsilon}{h_1(x_1')-h_2(x_1')+\varepsilon}\,\xi^n\big).
$$
By \eqref{fg_0},  \eqref{fg_1}, and \eqref{delta2},
we have
\begin{align}\label{Phi-bound}
    \frac{c_1}{9c_2}\leq\frac{c_1|x_2'|^2+\varepsilon}{c_2|x_1'|^2+\varepsilon}\leq \frac{h_1(x_2')-h_2(x_2')+\varepsilon}{h_1(x_1')-h_2(x_1')+\varepsilon}\leq \frac{c_2|x_2'|^2+\varepsilon}{c_1|x_1'|^2+\varepsilon}\leq \frac{9c_2}{c_1}.
\end{align}
This implies that $\Phi$ and $\Phi^{-1}$ are bounded independent of $\varepsilon$. Moreover, similar to \eqref{B-I-comp}, using \eqref{delta2} one can also show that
\begin{equation}\label{coef-1-comp}
    \begin{aligned}
    &\big|\frac{h_1(x_2')-h_2(x_2')+\varepsilon}{h_1(x_1')-h_2(x_1')+\varepsilon}-1\big|\leq 2c_2\frac{(|x_1'|+|x_2'|)|x_1'-x_2'|}{c_1|x_1'|^2+\varepsilon}\leq C\underline{\delta}(x_0)^{-1/2}\rho,\\
    &\big|\frac{h_1(x_1')-h_2(x_1')+\varepsilon}{h_1(x_2')-h_2(x_2')+\varepsilon}-1\big|\leq 2c_2\frac{(|x_1'|+|x_2'|)|x_1'-x_2'|}{c_1|x_2'|^2+\varepsilon}\leq C\underline{\delta}(x_0)^{-1/2}\rho,
\end{aligned}
\end{equation}
where $C>0$ is a constant depending only on $c_1$ and $c_2$. Note that \eqref{coef-1-comp} directly implies
\begin{equation}\label{coef-2-comp}
    \begin{aligned}
    |D \Phi-I_n|\leq C\underline{\delta}(x_0)^{-1/2}\rho \quad \mbox{and} \quad |D \Phi^{-1}-I_n|\leq C\underline{\delta}(x_0)^{-1/2}\rho.
\end{aligned}
\end{equation}
By the definitions of the extended solutions $u_2^{(1)}$ and $u_2^{(2)}$, we also know that for any $\xi,\eta\in \mathbb{R}^n$ with $|\xi'|<1$ and $|\eta'|<1$, we have
$$u_2^{(1)}(\xi)=u_2^{(2)}(\Phi(\xi)), \quad u_2^{(2)}(\eta)=u_2^{(1)}(\Phi^{-1}(\eta)).$$
Without loss of generality, we assume $$\frac{h_1(x_2')-h_2(x_2')+\varepsilon}{h_1(x_1')-h_2(x_1')+\varepsilon}\geq 1, $$
and thus we have $\Phi^{-1}(B_\rho(\eta_2))\subset B_\rho(\xi_2)$, where we denote
 $\xi_2:=(\xi_2',\xi_2^n):=\Phi^{-1}(\eta_2)$. Clearly $\xi_1'=x_1'$ and $\xi_2'=x_2'$.
Therefore, by using \eqref{fg_0}, \eqref{def:c_2}, \eqref{big-rho}, and the triangle inequality, for any $\xi\in \Phi^{-1}(B_\rho(\eta_2))$, it holds that
$$
\begin{aligned}
    |\xi-\xi_1|&\leq |\xi-\xi_2|+|\xi_2-\xi_1|\leq \rho+ |\xi_2'-\xi_1'|+|\xi_2^n-\xi_1^n|\\
    &\leq 2\rho+h_1(x_1')-h_2(x_1')+\varepsilon\\
    &\leq 2\rho+c_2|x_1'|^2+\varepsilon\leq c_6 \rho,
\end{aligned}
$$
where $c_6>2$ is a constant depending only on $n$, $c_1$, and $c_2$.
Thus
    $\Phi^{-1}(B_\rho(\eta_2))\subset B_\rho(\xi_2)\subset B_{c_6\rho}(\xi_1)$.

Let $c_5\in(0,1/8)$ be the same constant as in Lemma \ref{lem:mean3}.
From now on, we assume
\begin{equation}\label{rho-bound}
\rho\leq \min\{c_5/4,1/(64c_6)\} \underline{\delta}(x_0)^{1/2},
\end{equation}
since otherwise \eqref{c1alpha} clearly holds. Note that  \eqref{big-rho}, \eqref{rho-bound}, and \eqref{delta2} directly imply
\begin{equation}\label{c5bound}
     \frac{c_4}{108}\underline{\delta}(x_2)<\rho\leq c_5\underline{\delta}(x_2)^{1/2}<\frac{1}{8}\underline{\delta}(x_2)^{1/2}
\end{equation}
and
\begin{equation}\label{c6bound}
    \frac{c_4}{108}\underline{\delta}(x_1)<c_6\rho\leq \frac{1}{16}\underline{\delta}(x_1)^{1/2}.
\end{equation}
Now we estimate the three terms on the right hand side of \eqref{ddiff2} separately.

We first estimate the term $|(D_\xi u_2^{(1)})_{B_{c_6\rho}(\xi_1)}-(D_\eta u_2^{(2)})_{B_{\rho}(\eta_2)}|$ in \eqref{ddiff2}. For any $\xi\in \Phi^{-1}(B_\rho(\eta_2))$,  by the triangle inequality,
\begin{equation*}
\begin{aligned}
    &|(D_\xi u_2^{(1)})_{B_{c_6\rho}(\xi_1)}-(D_\eta u_2^{(2)})_{B_{\rho}(\eta_2)}|
    \\&\leq |(D_\xi u_2^{(1)})_{B_{c_6\rho}(\xi_1)}-D_\xi u_2^{(1)}(\xi)|+|D_\xi u_2^{(1)}(\xi)-(D_\eta u_2^{(2)})_{B_{\rho}(\eta_2)}|.
\end{aligned}
\end{equation*}
We then take the $L^p$ average over $\xi\in \Phi^{-1}(B_\rho(\eta_2))\subset B_{c_6\rho}(\xi_1)$ and use \eqref{Phi-bound}, \eqref{coef-2-comp}, and the triangle inequality to get
\begin{equation}\label{diff-aver-0}
\begin{aligned}
    &|(D_\xi u_2^{(1)})_{B_{c_6\rho}(\xi_1)}-(D_\eta u_2^{(2)})_{B_{\rho}(\eta_2)}|
    \\&\leq C\Tilde{\phi}(x_1,c_6\rho)+\Big(\fint_{\Phi^{-1}(B_\rho(\eta_2))}|D_\xi u_2^{(1)}(\xi)-(D_\eta u_2^{(2)})_{B_{\rho}(\eta_2)}|^p d\xi\Big)^{1/p}\\
    &\leq C\Tilde{\phi}(x_1,c_6\rho)+\Big(\fint_{B_\rho(\eta_2)}|D_\eta u_2^{(2)}(\eta) \,D\Phi-(D_\eta u_2^{(2)})_{B_{\rho}(\eta_2)}|^p d\eta\Big)^{1/p}\\
    &\leq C\Tilde{\phi}(x_1,c_6\rho)+C\Tilde{\phi}(x_2,\rho)+C \underline{\delta}(x_0)^{-1/2}\rho\,\|D_\eta u_2^{(2)}\|_{L^\infty(B_\rho(\eta_2))}.
\end{aligned}
\end{equation}
By \eqref{c5bound} and \eqref{co-diff4}
we know that
$|D\Tilde{\Lambda}_{x_2}|\leq C$ in $\Omega_{x_2,\rho}$ and thus
\begin{equation}\label{gradient-compare}
    \|D_\eta u_2^{(2)}\|_{L^\infty(B_\rho(\eta_2))}\leq C\|Du\|_{L^\infty(\Omega_{x_2,\rho})}\leq C\|Du\|_{L^\infty(\Omega_{x_0,\sqrt{\underline{\delta}(x_0)}/2})}.
\end{equation}
By \eqref{c6bound}, we can apply
\eqref{ineq:global2}  with $x_1$ in place of $\Bar{x}$ and $c_6\rho$ in place of $\rho$,  and use \eqref{delta2} to get
\begin{equation}\label{1-phi-t}
\begin{aligned}
    &\Tilde{\phi}(x_1,c_6\rho)\leq C\underline{\delta}(x_0)^{-\alpha_1/2}\rho^{\alpha_1}\|Du\|_{L^\infty(\Omega_{x_0,\sqrt{\underline{\delta}(x_0)}/4})}\\
     &\leq C\underline{\delta}(x_0)^{-\alpha_1/2}\rho^{\alpha_1}\|Du\|_{L^\infty(\Omega_{x_0,\sqrt{\underline{\delta}(x_0)}/2})}.
\end{aligned}
\end{equation}
Similarly, we have
\begin{equation}\label{2-phi-t}
\begin{aligned}
    &\Tilde{\phi}(x_2,\rho)\leq C\underline{\delta}(x_0)^{-\alpha_1/2}\rho^{\alpha_1}\|Du\|_{L^\infty(\Omega_{x_2,\sqrt{\underline{\delta}(x_0)}/4})}
    \\&\leq C\underline{\delta}(x_0)^{-\alpha_1/2}\rho^{\alpha_1}\|Du\|_{L^\infty(\Omega_{x_0,\sqrt{\underline{\delta}(x_0)}/2})}.
\end{aligned}
\end{equation}
Combining \eqref{diff-aver-0}, \eqref{gradient-compare}, \eqref{1-phi-t} and \eqref{2-phi-t}, we obtain
\begin{equation}\label{average-diff}
    |(D_\xi u_2^{(1)})_{B_{c_6\rho}(\xi_1)}-(D_\eta u_2^{(2)})_{B_{\rho}(\eta_2)}|
    \leq C\underline{\delta}(x_0)^{-\alpha_1/2}\rho^{\alpha_1}
    \|Du\|_{L^\infty(\Omega_{x_0,\sqrt{\underline{\delta}(x_0)}/2})}.
\end{equation}
Next, we estimate the term $|Du(x_1)-(D_\xi u_2^{(1)})_{B_{c_6\rho}(\xi_1)}|$ in \eqref{ddiff2}.
We define $\rho_j:=c_62^{-j}\rho$ for $j\in\mathbb{N}$. Because of \eqref{big-rho}, we let $j_1\geq 1$ be the integer such that
\begin{align}\label{def-j1}
    \rho_{j_1}\geq \frac{c_4}{216} \underline{\delta}(x_1),\quad \rho_{j_1+1}<\frac{c_4}{216}\underline{\delta}(x_1).
\end{align}
Then by using the triangle inequality and Lemma \ref{lem:volume}, we have
\begin{equation}\label{1-diff-aver}
    \begin{aligned}
        &|Du(x_1)-(D_\xi u_2^{(1)})_{B_{c_6\rho}(\xi_1)}|
        \\&\leq |Du(x_1)-(Du)_{\Omega_{\rho_{j_1+1}}(x_1)}|+|(Du)_{\Omega_{\rho_{j_1+1}}(x_1)}-(D_\xi u_2^{(1)})_{B_{\rho_{j_1}}(\xi_1)}|
        \\&\quad+|(D_\xi u_2^{(1)})_{B_{\rho_{j_1}}(\xi_1)}-(D_\xi u_2^{(1)})_{B_{ c_6\rho}(\xi_1)}|\\
        &\leq C\sum_{j=j_1+1}^\infty \phi(x_1,\rho_j)+C\sum_{j=0}^{j_1}\Tilde{\phi}(x_1,\rho_j)+|(Du)_{\Omega_{\rho_{j_1+1}}(x_1)}-(D_\xi u_2^{(1)})_{B_{\rho_{j_1}}(\xi_1)}|.
    \end{aligned}
\end{equation}
Using Lemma \ref{lem:volume} and H\"older's inequality, we obtain
\begin{equation}\label{diff-aver-j}
\begin{aligned}
    &|(Du)_{\Omega_{\rho_{j_1+1}}(x_1)}-(D_\xi u_2^{(1)})_{B_{\rho_{j_1}}(\xi_1)}|
   \\
    &\leq C\Big(\fint_{\Omega_{\rho_{j_1}}(x_1)}|D u(x)-(D_\xi u_2^{(1)})_{B_{\rho_{j_1}}(\xi_1)}|^p dx\Big)^{1/p}\\
    &\leq C\Big(\fint_{\Tilde{\Lambda}_{x_1}(\Omega_{\rho_{j_1}}(x_1))}|D_\xi u_2^{(1)}(\xi)\,B(\xi)-(D_\xi u_2^{(1)})_{B_{\rho_{j_1}}(\xi_1)}|^p d\xi\Big)^{1/p},
\end{aligned}
\end{equation}
where as in Section \ref{sec1.4}, we denote
$$B(\xi)=D\Tilde{\Lambda}_{x_1}(\Tilde{\Lambda}_{x_1}^{-1}(\xi)),$$
and use \eqref{co-diff4} in the last line.
By \eqref{co-diff3} with $x_1$ in place of $\Bar{x}$, from \eqref{diff-aver-j} we deduce
\begin{equation}\label{diff-aver2}
\begin{aligned}
    &|(Du)_{\Omega_{\rho_{j_1+1}}(x_1)}-(D_\xi u_2^{(1)})_{B_{\rho_{j_1}}(\xi_1)}|
       \leq C\Tilde{\phi}(x_1,2\rho_{j_1})+\frac{C\rho_{j_1}}{\underline{\delta}(x_1)^{1/2}} \|Du\|_{L^\infty(\Omega_{x_1,\rho_{j_1}})}
       \\&\leq C\Tilde{\phi}(x_1,\rho_{j_1-1})+C\rho^{1/2}\|Du\|_{L^\infty(\Omega_{x_1,c_6\rho})}.
\end{aligned}
\end{equation}
Here in the last inequality we also used \eqref{def-j1} and \eqref{big-rho}.
Combining \eqref{1-diff-aver}, \eqref{diff-aver2} and using \eqref{rho-bound}, we get
\begin{equation}\label{diff-aver3}
    \begin{aligned}
         &|Du(x_1)-(D_\xi u_2^{(1)})_{B_{c_6\rho}(\xi_1)}|
        \\
        &\leq C\sum_{j=j_1+1}^\infty \phi(x_1,\rho_j)+C\sum_{j=0}^{j_1}\Tilde{\phi}(x_1,\rho_j)+C\rho^{1/2}\|Du\|_{L^\infty(\Omega_{x_1,\sqrt{\underline{\delta}(x_0)}/4})}.
    \end{aligned}
\end{equation}
We recall \eqref{big-rho} and \eqref{c6bound}. Therefore, by applying Lemma \ref{lem:iter}  with $x_1$ in place of $\Bar{x}$ and $\rho_j$ in place of $\rho$, and using  \eqref{diff-aver3} and \eqref{delta2}, we obtain
\begin{equation}\label{diff-aver4}
\begin{aligned}
    &|Du(x_1)-(D_\xi u_2^{(1)})_{B_{c_6\rho}(\xi_2)}|
        \\& \leq C\underline{\delta}(x_0)^{-\alpha_1/2} \rho^{\alpha_1} \|Du\|_{L^\infty(\Omega_{x_1,\sqrt{\underline{\delta}(x_0)}/4})}+C\rho^{1/2}\|Du\|_{L^\infty(\Omega_{x_1,\sqrt{\underline{\delta}(x_0)}/4})}
        \\&\leq
        C\underline{\delta}(x_0)^{-\alpha_1/2} \rho^{\alpha_1} \|Du\|_{L^\infty(\Omega_{x_0,\sqrt{\underline{\delta}(x_0)}/2})}.
\end{aligned}
\end{equation}
Similarly, it also holds that
\begin{equation}\label{diff-aver5}
    |Du(x_2)-(D_\xi u_2^{(2)})_{B_{\rho}(\xi_2)}|
        \leq
        C\underline{\delta}(x_0)^{-\alpha_1/2} \rho^{\alpha_1} \|Du\|_{L^\infty(\Omega_{x_0,\sqrt{\underline{\delta}(x_0)}/2})}.
\end{equation}
Combining \eqref{ddiff2}, \eqref{average-diff}, \eqref{diff-aver4}, and \eqref{diff-aver5} yields \eqref{c1alpha}. The proof is completed.
\end{proof}

\section{Estimates of \texorpdfstring{$U_1^\varepsilon-U_2^\varepsilon$}{|U1-U2|}}\label{sec:U1-U2}

In the following, we use the $C^{1,\beta}$ estimate we derive in Proposition \ref{thm-1/2} to obtain an asymptotic expansion of $Du_\varepsilon$ in terms of $U_1^\varepsilon - U_2^\varepsilon$, for arbitrary constants $U_1^\varepsilon$ and $U_2^\varepsilon$ (Proposition \ref{prop_gradient_lowerbound_U1-U2}). When $p \ge (n+1)/2$,  for the specific $U_1^\varepsilon$ and $U_2^\varepsilon$ in \eqref{equinfty}, $U_1^\varepsilon-U_2^\varepsilon$ will converges to $0$ as $\varepsilon \to 0$, and we compute in Theorem \ref{thm_U1-U2} the rate of convergence using information of the flux $\cF$  defined in \eqref{flux}. When $p < (n+1)/2$, $U_1^\varepsilon-U_2^\varepsilon$ will converges to $U_1 - U_2$ as shown in Theorem \ref{thm_convergence}.

\begin{proposition}\label{prop_gradient_lowerbound_U1-U2}
Let $h_1$, $h_2$ be  $C^{2}$ functions satisfying \eqref{fg_0}-\eqref{def:c_2}, $p>1$, $n \ge 2$, $\varepsilon \in[0,1/4)$, $U_1^\varepsilon$, $U_2^\varepsilon$ be arbitrary constants with $ |U_i^\varepsilon|  \le  \|\varphi\|_{L^\infty(\partial\Omega)}$, and $u_\varepsilon \in W^{1,p}(\widetilde\Omega^\varepsilon)$ be a solution of
$$
\left\{
\begin{aligned}
-\dv (|D u_\varepsilon|^{p-2} D u_\varepsilon) &=0  &&\mbox{in }\widetilde{\Omega}^\varepsilon,\\
u_\varepsilon &= U_i^\varepsilon &&\mbox{on}~\partial{\cD_{i}^\varepsilon},~i=1,2,\\
 u_\varepsilon &= \varphi  &&\mbox{on } \partial \Omega.
\end{aligned}
\right.
$$
There exist positive constants $\beta \in (0, 1)$, $C_1$ and $C_2$ depending only on $n$, $p$, $c_1$, and $c_2$, such that
\begin{equation}\label{asymptotes_U1-U2}
Du_\varepsilon(x)=\left(0',\frac{U_1^\varepsilon - U_2^\varepsilon}{\delta(x)}\right)+\mathbf{f}_1(x,\varepsilon) \quad \mbox{for}~~x \in \overline\Omega_{1/4}^\varepsilon,
\end{equation}
where $\delta$ is defined in \eqref{def_delta},
\begin{equation}\label{f1_bound_U1-U2}
|\mathbf{f}_1(x,\varepsilon)|\le C_1 \left( \frac{|U_1^\varepsilon - U_2^\varepsilon|}{\delta^{1-\beta/2}(x)} + \|\varphi\|_{L^\infty(\partial\Omega)}e^{-\frac{C_2}{\sqrt\varepsilon + |x'|}} \right).
\end{equation}

\end{proposition}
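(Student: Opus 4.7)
The plan is to decompose
\[
\mathbf{f}_1(x,\varepsilon) = \bigl(D_{x'}u_\varepsilon(x),\; D_{x_n}u_\varepsilon(x) - (U_1^\varepsilon - U_2^\varepsilon)/\delta(x)\bigr)
\]
into tangential and normal components and estimate each using Proposition~\ref{thm-1/2} (the $C^{1,\beta}$ seminorm bound) together with Proposition~\ref{prop_gradient_upperbound_U1-U2} (the pointwise gradient bound). As a preliminary, the maximum principle gives $\|u_\varepsilon\|_{L^\infty(\widetilde\Omega^\varepsilon)} \le \|\varphi\|_{L^\infty(\partial\Omega)}$, and since $\varepsilon+|y'|^2 \sim \underline\delta(x)$ uniformly for $y \in \Omega_{x,\sqrt{\underline\delta(x)}/2}^\varepsilon$, Proposition~\ref{prop_gradient_upperbound_U1-U2} furnishes the uniform bound
\[
M(x) := \|Du_\varepsilon\|_{L^\infty(\Omega_{x,\sqrt{\underline\delta(x)}/2}^\varepsilon)} \le C\bigl(|U_1^\varepsilon - U_2^\varepsilon|/\delta(x) + \|\varphi\|_{L^\infty(\partial\Omega)}e^{-C_2/(\sqrt\varepsilon+|x'|)}\bigr).
\]

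For the normal component, the fundamental theorem of calculus along the vertical segment through $x'$ gives
\[
(U_1^\varepsilon - U_2^\varepsilon)/\delta(x) = \fint_{-\varepsilon/2+h_2(x')}^{\varepsilon/2+h_1(x')} D_{x_n}u_\varepsilon(x',t)\,dt,
\]
so the normal component of $\mathbf{f}_1$ at $x$ equals $\fint [D_{x_n}u_\varepsilon(x)-D_{x_n}u_\varepsilon(x',t)]\,dt$. Both $x$ and $(x',t)$ lie in $\overline{\Omega_{x,\sqrt{\underline\delta(x)}/4}^\varepsilon}$ with $|x_n-t|\le\delta(x)$, so Proposition~\ref{thm-1/2} bounds the integrand by $C\underline\delta(x)^{-\beta/2}\delta(x)^\beta M(x) \le C\delta(x)^{\beta/2}M(x)$, which (after expanding $M(x)$ and using $\delta\sim\underline\delta$) matches the right-hand side of \eqref{f1_bound_U1-U2}.

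For the tangential component, differentiating the boundary condition $u_\varepsilon(x', \varepsilon/2 + h_1(x')) \equiv U_1^\varepsilon$ in $x_i$ yields $D_{x_i}u_\varepsilon + D_{x_i}h_1(x')D_{x_n}u_\varepsilon = 0$ on $\Gamma_+^\varepsilon$; using $\nabla_{x'}h_1(0')=0$, $\|h_1\|_{C^2}\le c_2$, and $\delta(x)\ge c_1|x'|^2$, this gives $|D_{x'}u_\varepsilon(\bar x)| \le C\delta(x)^{1/2}M(x)$ at the vertical projection $\bar x=(x', \varepsilon/2+h_1(x'))$ of $x$ onto $\Gamma_+^\varepsilon$. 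Proposition~\ref{thm-1/2} applied to the pair $x,\bar x$ (noting $\bar x'=x'$ and $|x-\bar x|\le\delta(x)$, with the $C^\beta$ estimate extending to the closure by continuity up to the smooth boundary) then gives $|D_{x'}u_\varepsilon(x) - D_{x'}u_\varepsilon(\bar x)| \le C\delta(x)^{\beta/2}M(x)$; since $\delta(x)^{1/2}\le\delta(x)^{\beta/2}$ for $\delta\le 1$ and $\beta\le 1$, the two pieces combine to give the tangential bound (an analogous projection onto $\Gamma_-^\varepsilon$ works equally well).

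The hard part will be the bookkeeping: verifying that every invocation of Propositions~\ref{thm-1/2} and \ref{prop_gradient_upperbound_U1-U2} takes place on neighborhoods sitting inside the region where those propositions apply (routine since $x \in \Omega_{1/4}^\varepsilon$ leaves ample room and $\varepsilon+|y'|^2\sim\underline\delta(x)$ on $\Omega_{x,\sqrt{\underline\delta(x)}/2}^\varepsilon$), handling the regime $\underline\delta(x)\sim 1$ by classical $p$-Laplace regularity, and treating the borderline case $\varepsilon=0$ by a limiting argument or by noting that the preceding propositions remain valid there. Once these technicalities are in place, the proposition follows from the two short computations above.
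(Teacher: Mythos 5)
Your proof is correct and takes essentially the same route as the paper's: the same decomposition of $\mathbf f_1$, the same use of Proposition~\ref{thm-1/2} to control the oscillation of $Du_\varepsilon$ over the vertical segment through $x'$ and over the projection to $\Gamma_+^\varepsilon$, and the same differentiation of the constant Dirichlet condition to control the tangential derivative on the boundary. The only (inessential) difference is that for the normal component you write $(U_1^\varepsilon-U_2^\varepsilon)/\delta(x)$ as a vertical average of $D_{x_n}u_\varepsilon$ via the fundamental theorem of calculus, whereas the paper invokes the mean value theorem to pick a single point $y(x)=(x',\zeta(x'))$ with $D_n u_\varepsilon(y(x))=(U_1^\varepsilon-U_2^\varepsilon)/\delta(x)$; both versions feed into the Hölder estimate identically.
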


\begin{proof}
By mean value theorem, we know that for any $x=(x',x_n)\in \overline\Omega_{1/4}^\varepsilon$, there exist $\zeta(x')\in(-\frac{\varepsilon}{2}+h_2(x'),\frac{\varepsilon}{2}+h_1(x'))$ and $y(x)=(x',\zeta(x'))\in \Omega_{1/4}^\varepsilon$, such that
\begin{equation*}
    D_n u_\varepsilon(y(x))=(U_1^{\varepsilon}-U_2^{\varepsilon})/\delta(x).
\end{equation*}
Let \begin{equation}\label{f1-def}
    \mathbf{f}_1(x,\varepsilon):=(D_{x'} u_\varepsilon(x), D_n u_\varepsilon(x)-D_n u_\varepsilon(y(x))).
\end{equation}
By Propositions \ref{thm-1/2} and \ref{prop_gradient_upperbound_U1-U2}, we have
\begin{equation}\label{f_1-bound}
\begin{aligned}
    | D_n u_\varepsilon(x)-D_n u_\varepsilon(y(x))|&\le C |x_n-\zeta(x')|^{\beta}{\delta}(x)^{-\beta/2}\|D u_\varepsilon\|_{L^\infty(\Omega_{x,\sqrt{\underline{\delta}(x)}/2})}
    \\&\le C_1\left( \frac{|U_1^\varepsilon - U_2^\varepsilon|}{\delta^{1-\beta/2}(x)} + \|\varphi\|_{L^\infty(\partial\Omega)}e^{-\frac{C_2}{\sqrt\varepsilon + |x'|}} \right).
\end{aligned}
\end{equation}
Let $z(x)=(x', \varepsilon/2 +h_1(x'))$. Since $u_\varepsilon\equiv U_1^\varepsilon$ on $\overline{\cD}_1^\varepsilon$, by Proposition \ref{prop_gradient_upperbound_U1-U2}, we have
\begin{equation}\label{f2-bound}
\begin{aligned}
    |D_{x'}u(z(x))|&\leq C |x'| |Du(z(x))|\\
    &\leq C_1 |x'|\left( \frac{|U_1^\varepsilon - U_2^\varepsilon|}{\delta(x)} + \|\varphi\|_{L^\infty(\partial\Omega)}e^{-\frac{C_2}{\sqrt\varepsilon + |x'|}} \right).
\end{aligned}
\end{equation}
Similar to \eqref{f_1-bound}, we also have
\begin{equation}\label{f3-bound}
    \begin{aligned}
    |D_{x'}u(x)-D_{x'}u(z(x))|\le C_1 \left( \frac{|U_1^\varepsilon - U_2^\varepsilon|}{\delta^{1-\beta/2}(x)} + \|\varphi\|_{L^\infty(\partial\Omega)}e^{-\frac{C_2}{\sqrt\varepsilon + |x'|}} \right).
    \end{aligned}
\end{equation}
Therefore, by \eqref{f2-bound}, \eqref{f3-bound}, and the triangle inequality, we have
\begin{equation*}
    |D_{x'}u(x)|\leq C_1 \left( \frac{|U_1^\varepsilon - U_2^\varepsilon|}{\delta^{1-\beta/2}(x)} + \|\varphi\|_{L^\infty(\partial\Omega)}e^{-\frac{C_2}{\sqrt\varepsilon + |x'|}} \right).
\end{equation*}
This completes the proof of Proposition \ref{prop_gradient_lowerbound_U1-U2}.
\end{proof}

Following the proof of \cite{GorNov}*{Proposition 2.1} with slight modification, 
we have the following proposition, which will be proved in the Appendix to make this article self-contained.

\begin{proposition}\label{prop_flux}
Let $n \ge 2$, $p \ge (n + 1)/2$, $h_1$, $h_2$ be  $C^2$ functions satisfying \eqref{fg_0}-\eqref{def:c_2}, $\varepsilon \in(0,1)$, and $u_\varepsilon \in W^{1,p}(\Omega)$ be the solution of \eqref{equinfty}. Then there exist positive constants $C_1, C_2$ depending only on $n$, $p$, $c_1$, $c_2$, and $\|\varphi\|_{L^\infty}$, such that for any $r \in (0,1)$,
\begin{equation}\label{flux_close_to_F}
\left| \cF - \lim_{\varepsilon \to 0_+}\int_{\Gamma_{-,r}^\varepsilon} |D u_\varepsilon|^{p-2} D u_\varepsilon \cdot \nu \right| \le C_1 e^{-\frac{C_2}{r}},
\end{equation}
where $\cF$ is given in \eqref{flux}.
\end{proposition}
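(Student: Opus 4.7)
The plan is to pass to the limit $\varepsilon\to 0_+$, combine the flux identities $\int_{\partial\cD_2^\varepsilon}|Du_\varepsilon|^{p-2}Du_\varepsilon\cdot\nu=0$ (from \eqref{equinfty}$_3$) and $\int_{\partial\cD_1^0\cup\partial\cD_2^0}|Du_0|^{p-2}Du_0\cdot\nu=0$ (from \eqref{touching_equation_1}$_3$, which applies since $p\ge(n+1)/2$), and reduce the proposition to an exponential-decay estimate for $Du_0$ near the origin supplied by Lemma~\ref{lemma_exp_decay}.

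More precisely, the first identity rewrites the truncated flux as
$$
\int_{\Gamma_{-,r}^\varepsilon}|Du_\varepsilon|^{p-2}Du_\varepsilon\cdot\nu=-\int_{\partial\cD_2^\varepsilon\setminus\Gamma_{-,r}^\varepsilon}|Du_\varepsilon|^{p-2}Du_\varepsilon\cdot\nu.
$$
To pass to the limit on the right-hand side, I would parametrize $\partial\cD_2^\varepsilon=\partial\cD_2^0-(0',\varepsilon/2)$ by the fixed surface $\partial\cD_2^0$ via the rigid translation and apply the $C^{1,\beta}$ convergence of $u_\varepsilon$ to $u_0$ from Theorem~\ref{thm_convergence} on a compact set adjacent to $\partial\cD_2^0\setminus\Gamma_{-,r}^0$ (together with uniform Lieberman-type $C^{1,\alpha}$ boundary estimates from \cite{Lie} to pass the gradient to the boundary). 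This yields
$$
\lim_{\varepsilon\to0_+}\int_{\partial\cD_2^\varepsilon\setminus\Gamma_{-,r}^\varepsilon}|Du_\varepsilon|^{p-2}Du_\varepsilon\cdot\nu=\int_{\partial\cD_2^0\setminus\Gamma_{-,r}^0}|Du_0|^{p-2}Du_0\cdot\nu.
$$
Combining with the second identity (which gives $\int_{\partial\cD_2^0}|Du_0|^{p-2}Du_0\cdot\nu=-\cF$) and splitting $\partial\cD_2^0=\Gamma_{-,r}^0\cup(\partial\cD_2^0\setminus\Gamma_{-,r}^0)$, I conclude
$$
\lim_{\varepsilon\to0_+}\int_{\Gamma_{-,r}^\varepsilon}|Du_\varepsilon|^{p-2}Du_\varepsilon\cdot\nu=\cF+\int_{\Gamma_{-,r}^0}|Du_0|^{p-2}Du_0\cdot\nu,
$$
so the proposition reduces to bounding $\bigl|\int_{\Gamma_{-,r}^0}|Du_0|^{p-2}Du_0\cdot\nu\bigr|$ by $C_1e^{-C_2/r}$.

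For this remaining estimate, I would apply Lemma~\ref{lemma_exp_decay} to $v:=u_0-U_0$. Since $p\ge(n+1)/2$, Theorem~\ref{thm_A1} gives $u_0\equiv U_0$ on $\overline{\cD_1^0}\cup\overline{\cD_2^0}$, so $v$ vanishes on $\Gamma_\pm^0$ and solves \eqref{p_laplace_narrow_0boundary} with $\varepsilon=0$. The lemma then yields
$$
|Du_0(x)|\le C_1 e^{-C_2/|x'|}\,\|u_0-U_0\|_{L^p(\Omega_1^0\setminus\Omega_{1/2}^0)}\le C\|\varphi\|_{L^\infty(\partial\Omega)}e^{-C_2/|x'|}
$$
on $\overline{\Omega_{1/2}^0}$, where the second inequality uses the maximum principle bound $\|u_0\|_{L^\infty}\le\|\varphi\|_{L^\infty(\partial\Omega)}$. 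Raising this to the $(p-1)$-th power, integrating over $\Gamma_{-,r}^0\subset\{|x'|<r\}$, and absorbing the resulting polynomial factor $r^{n-1}$ into a slightly smaller exponential rate delivers the desired bound $C_1e^{-C_2/r}$.

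The main technical obstacle is that the surfaces $\partial\cD_2^\varepsilon\setminus\Gamma_{-,r}^\varepsilon$ themselves vary with $\varepsilon$ and Theorem~\ref{thm_convergence} only furnishes $C^{1,\beta}$ convergence on compact sets strictly disjoint from $\bigcup_\varepsilon(\cD_1^\varepsilon\cup\cD_2^\varepsilon)$; nonetheless, because these surfaces are rigid translates of $\partial\cD_2^0$ by $-(0',\varepsilon/2)$, a change of variables pulls everything back to the fixed surface $\partial\cD_2^0$, and classical uniform boundary $C^{1,\alpha}$ estimates for the $p$-Laplacian ensure the pulled-back gradients converge uniformly on compact subsets of $\partial\cD_2^0\setminus\{0\}$. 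A minor point is that Lemma~\ref{lemma_exp_decay} admits the limiting case $\varepsilon=0$, where $\Gamma_+^0$ and $\Gamma_-^0$ touch at the origin: inspection of its proof (Moser-type iteration together with the Poincar\'e inequality in $x_n$) shows that none of the steps require $\varepsilon>0$.
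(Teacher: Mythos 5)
Your overall strategy mirrors the paper's in its essential reduction: use the zero-flux condition for $u_\varepsilon$ and the flux identity for $u_0$ to express the difference $\cF-\lim_\varepsilon\int_{\Gamma_{-,r}^\varepsilon}$ as $-\int_{\Gamma_{-,r}^0}|Du_0|^{p-2}Du_0\cdot\nu$, and then bound the latter by the exponential decay of $|Du_0|$ near the origin (which both you and the paper ultimately get from Lemma~\ref{lemma_exp_decay}, directly for you, via \eqref{gradient_upperbound_U1-U2} for the paper). Your algebra and sign-tracking are correct, including the identity $\int_{\partial\cD_2^0}|Du_0|^{p-2}Du_0\cdot\nu=-\cF$ from \eqref{touching_equation_1}$_3$ and \eqref{flux}.

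However, the decomposition you use to pass to the limit differs from the paper's, and the difference is not cosmetic: you invoke $\int_{\partial\cD_2^\varepsilon}|Du_\varepsilon|^{p-2}Du_\varepsilon\cdot\nu=0$ and then must take $\varepsilon\to0_+$ of a surface integral over the \emph{moving} boundary $\partial\cD_2^\varepsilon\setminus\Gamma_{-,r}^\varepsilon$. Theorem~\ref{thm_convergence} gives $C^{1,\beta}$ convergence only on compact subsets of $\Omega\setminus\bigl(\bigcup_{0<\varepsilon\le\varepsilon_0}(\cD_1^\varepsilon\cup\cD_2^\varepsilon)\cup\{0\}\bigr)$, which excludes $\partial\cD_2^0$ itself (and, in particular, the ``outer'' part of $\partial\cD_2^0$ facing away from $\cD_1$ actually lies inside $\bigcup_\varepsilon\cD_2^\varepsilon$). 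You recognize this and sketch a fix -- pull back by the rigid translation $x\mapsto x+(0',\varepsilon/2)$ and use uniform Lieberman boundary $C^{1,\alpha}$ estimates -- but this step is asserted rather than proved. To close it one must argue that the pulled-back functions $\tilde u_\varepsilon(x):=u_\varepsilon(x-(0',\varepsilon/2))$ satisfy the $p$-Laplace equation in a fixed neighborhood of $\partial\cD_2^0\setminus\Gamma_{-,r}^0$ with constant boundary data $U_2^\varepsilon$, are precompact in $C^{1,\beta}$ up to that boundary by Lieberman's estimate, and converge to $u_0$ there because $\|\tilde u_\varepsilon-u_\varepsilon\|_{L^p}\to0$ (continuity of translation) and $u_\varepsilon\to u_0$ in $L^p$. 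This can be carried out, but it is a nontrivial extra lemma that the paper deliberately avoids.

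The paper's route sidesteps the moving-boundary issue entirely: it integrates by parts over a region bounded below by $\Gamma_{-,r}^\varepsilon$, laterally by the thin cylinder $\Sigma_r^\varepsilon$ between the heights $-\varepsilon/2+h_2(x')$ and $h_2(x')$, and elsewhere by a \emph{fixed} smooth surface $\eta$ enclosing $\cD_1^\varepsilon$ in the interior of $\widetilde\Omega^\varepsilon$. The flux over $\eta$ is handled directly by Theorem~\ref{thm_convergence} (since $\eta$ lies in a compact set where $C^{1,\beta}$ convergence holds), the flux over $\Sigma_r^\varepsilon$ is shown to vanish as $\varepsilon\to0_+$ using the pointwise bound $|Du_\varepsilon|\le C(\varepsilon+|x'|^2)^{-1}$ from \eqref{gradient_upperbound_U1-U2-new}, and the flux over $\Gamma_{-,r}^0$ is exponentially small as in your argument. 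In short: your decomposition is valid and can be made rigorous, but it trades the paper's extra surface $\Sigma_r^\varepsilon$ (whose contribution is easy to estimate) for a boundary-convergence lemma that requires a separate pull-back/compactness argument, so as written your proposal has a genuine gap at the limit-passing step.
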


\begin{remark}
The proof of Proposition \ref{prop_flux} relies on the facts that $|D u_0| \le C_1 e^{-\frac{C_2}{r}}$ and $\| u_\varepsilon - u_0 \|_{C^{1,\alpha}(K)} \to 0$ as $\varepsilon \to 0$ for any $K  \subset \subset \Omega \setminus \Big( \underset{0 < \varepsilon \le \varepsilon_0}{\cup}(\cD_1^\varepsilon \cup \cD_2^\varepsilon) \cup \{0\} \Big)$ with $\varepsilon_0>0$, where $u_0$ is the minimizer of \eqref{minimizer} with $\varepsilon = 0$. See Proposition \ref{prop_gradient_upperbound_U1-U2} and Theorem \ref{thm_convergence}. However, $D u_0$ may be unbounded when $p < (n + 1)/2$. This fact was overlooked in \cites{CirSci,CirSci2}.
\end{remark}

With the help of Propositions \ref{prop_gradient_upperbound_U1-U2} and \ref{prop_flux}, we are able to derive the rate of convergence for $U_1^\varepsilon - U_2^\varepsilon$ when $\varepsilon \to 0$.

\begin{theorem}\label{thm_U1-U2}
Let $p \ge (n + 1)/2$, $U_1^\varepsilon$ and $U_2^\varepsilon$ be the constants in \eqref{equinfty}. Then it holds that
\begin{equation}\label{U1-U2}
 \lim_{\varepsilon \to 0_+} (U_1^\varepsilon - U_2^\varepsilon) \Theta(\varepsilon)^{-1} = \sgn(\cF)(K|\cF|)^{1/(p-1)},
\end{equation}
where $\Theta(\varepsilon)$ is given in \eqref{Theta}, $K$ is given in \eqref{def:K}, and $\cF$ is given in \eqref{flux}.
\end{theorem}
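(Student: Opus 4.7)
I will evaluate the flux in Proposition \ref{prop_flux} by plugging in the gradient asymptotics from Proposition \ref{prop_gradient_lowerbound_U1-U2}, thereby reading off the precise rate of $\Delta := U_1^\varepsilon - U_2^\varepsilon \to 0$. Fix a small $r\in(0,1/4)$ and parametrize $\Gamma_{-,r}^\varepsilon$ by $x=(x',-\varepsilon/2+h_2(x'))$ for $|x'|<r$, so that $\nu\,dS = (-\nabla_{x'}h_2(x'),1)\,dx'$. Since $\nabla_{x'}h_2(0')=0$ and $|U_i^\varepsilon|\le \|\varphi\|_{L^\infty(\partial\Omega)}$ by Proposition \ref{prop_gradient_upperbound_U1-U2}, Proposition \ref{prop_gradient_lowerbound_U1-U2} yields on $\Gamma_{-,r}^\varepsilon$
\begin{align*}
|Du_\varepsilon|^{p-2} Du_\varepsilon\cdot \nu\,dS = \sgn(\Delta)\,\frac{|\Delta|^{p-1}}{\delta(x')^{p-1}}\,dx' + E(x',\varepsilon)\,dx',
\end{align*}
where, using Lipschitz continuity of $\xi\mapsto|\xi|^{p-2}\xi$ when $p\ge 2$ (or its H\"older continuity of exponent $p-1$ when $1<p<2$), the pointwise error $|E(x',\varepsilon)|$ carries an extra factor of $\delta(x')^{\min\{1,p-1\}\beta/2}$ relative to the leading density, plus an exponentially small remainder coming from $\|\varphi\|_{L^\infty(\partial\Omega)}e^{-C_2/(\sqrt{\varepsilon}+|x'|)}$.

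\textbf{Computing the leading integral.} By \eqref{fg_0}--\eqref{def:c_2}, $h_1(x')-h_2(x') = \tfrac12 (x')^T A x' + O(|x'|^3)$ with $A := D_{x'}^2(h_1-h_2)(0')$ positive definite by \eqref{fg_1}. Pick a linear map $T$ with $T^T A T = 2I_{n-1}$, so $|\det T| = 2^{(n-1)/2}(\det A)^{-1/2}$, and substitute $x' = \sqrt{\varepsilon}\,Ty'$. Then $\delta(x') = \varepsilon(1+|y'|^2)(1+o(1))$ and
\begin{align*}
\int_{|x'|<r}\frac{dx'}{\delta(x')^{p-1}} = \frac{(2\varepsilon)^{(n-1)/2}\varepsilon^{-(p-1)}}{(\det A)^{1/2}}\int_{|Ty'|<r/\sqrt{\varepsilon}}\frac{dy'}{(1+|y'|^2)^{p-1}}(1+o(1)).
\end{align*}
When $p>(n+1)/2$, the $y'$-integral converges to $\pi^{(n-1)/2}\Gamma(p-(n+1)/2)/\Gamma(p-1)$, and by \eqref{Theta} and \eqref{def:K} the whole expression equals $K^{-1}\Theta(\varepsilon)^{-(p-1)}(1+o(1))$. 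When $p=(n+1)/2$, the $y'$-integral is asymptotic to $\frac{\pi^{(n-1)/2}}{\Gamma((n-1)/2)}|\ln\varepsilon|(1+o(1))$, arising from the logarithmic divergence of $\int \rho^{n-2}(1+\rho^2)^{-(n-1)/2}\,d\rho$, which again produces $K^{-1}\Theta(\varepsilon)^{-(p-1)}(1+o(1))$.

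\textbf{Conclusion.} The same substitution shows $\int|E|\,dx'$ is smaller than the leading integral by a factor of $\varepsilon^{\min\{1,p-1\}\beta/2}$, so combining with Proposition \ref{prop_flux} yields
\begin{align*}
\sgn(\Delta)|\Delta|^{p-1}\,K^{-1}\Theta(\varepsilon)^{-(p-1)}(1+o(1)) = \cF + O(e^{-C_2/r}).
\end{align*}
Sending $\varepsilon\to 0$ first and then $r\to 0$, and applying the continuous inverse $s\mapsto \sgn(s)|s|^{1/(p-1)}$ of $t\mapsto \sgn(t)|t|^{p-1}$, I obtain $\Delta/\Theta(\varepsilon)\to \sgn(\cF)(K|\cF|)^{1/(p-1)}$ (using $K>0$). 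The main obstacle is controlling all the error contributions uniformly so that the $o(1)$ in the flux identity is genuinely independent of the unknown $\Delta$, allowing a clean nonlinear inversion; this is particularly delicate when $1<p<2$, where the merely H\"older continuity of $\xi\mapsto|\xi|^{p-2}\xi$ and the $y'$-cutoff at scale $r/\sqrt{\varepsilon}$ (sensitive when $p$ is close to $(n+1)/2$) require careful bookkeeping of the Taylor remainder from $h_1-h_2$ and of the $\mathbf{f}_1$-remainder through the $(p-1)$-power nonlinearity.
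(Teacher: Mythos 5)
Your proposal follows essentially the same route as the paper: apply Proposition \ref{prop_gradient_lowerbound_U1-U2} to expand the flux density on $\Gamma_{-,r}^\varepsilon$, integrate, evaluate the resulting leading integral by the change of variables $x'=\sqrt{\varepsilon}\,Ty'$ (this is exactly the paper's Lemma \ref{lemma_calculus} showing \eqref{calculus}), match against Proposition \ref{prop_flux}, and invert $t\mapsto \sgn(t)|t|^{p-1}$; your Gamma-function and $|\ln\varepsilon|$ computations agree with the paper's. The one point you flag but do not fully resolve — making the $o(1)$ uniform in the unknown $\Delta$ so the inversion is legitimate, together with the a priori unknown sign of $\Delta=U_1^\varepsilon-U_2^\varepsilon$ — is precisely what the paper handles by a multiplicative $(1\pm\tau)$-sandwich on $|Du_\varepsilon|^{p-2}Du_\varepsilon\cdot\nu$ (which sidesteps the H\"older-continuity bookkeeping you worry about for $1<p<2$) and by passing to subsequences on which $\sgn(\Delta)$ is constant, then arguing the sign stabilizes when $\cF\neq 0$ and that all subsequential limits vanish when $\cF=0$. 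So your approach is correct in substance, with the acknowledged loose end closed in the paper by those two devices.
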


\begin{proof}
By Proposition \ref{prop_gradient_lowerbound_U1-U2}, we have
\begin{equation}\label{gradient_lowerbound_U1-U2}
\left|Du_\varepsilon \cdot \nu (y) - \frac{U_1^\varepsilon - U_2^\varepsilon}{\delta(y)}\right| \le C_1 \Big( \frac{|U_1^\varepsilon - U_2^\varepsilon|}{\delta^{1-\beta/2}(y)} + e^{-\frac{C_2}{\sqrt\varepsilon + |y'|}} \Big) \quad\mbox{for}~~y \in \Gamma_{-,1/4}^\varepsilon,
\end{equation}
where $\delta$ is defined in \eqref{def_delta}.

We will show that every sequence converges to the same limit.

First, Let $\{\varepsilon_j\}_{j\in\mathbb{N}}$ be a decreasing sequence such that $\varepsilon_j \to 0$ as $j\to \infty$ and $U_1^{\varepsilon_j}\ge U_2^{\varepsilon_j}$ for every $j\in\mathbb{N}$.
By \eqref{gradient_lowerbound_U1-U2}, for any $\tau\in(0,1/2)$ and $r \in (0, 1/4)$, we have
\begin{align*}
& (1-\tau) \Big(  \frac{U_1^{\varepsilon_j} - U_2^{\varepsilon_j}}{\delta_j(y)} \Big)^{p-1}(1 - C \delta_j^{\beta/2}(y)) - C(\tau) e^{-\frac{C_2}{\sqrt{\varepsilon_j} + |y'|}} \le |D u_{\varepsilon_j}|^{p-2} D u_{\varepsilon_j} \cdot \nu (y) \\
& \le  (1+\tau) \Big(  \frac{U_1^{\varepsilon_j} - U_2^{\varepsilon_j}}{\delta_j(y)} \Big)^{p-1}(1 + C \delta_j^{\beta/2}(y)) + C(\tau) e^{-\frac{C_2}{\sqrt{\varepsilon_j} + |y'|}} \quad \text{for}\,\, y \in \Gamma_{-,1/4}^{\varepsilon_j},
\end{align*}
where $\delta_j(y):=\varepsilon_j+h_1(y')-h_2(y')$, $C, C_2 > 0$ depends only on $n$, $p$, $c_1$, $c_2$, $\beta$, $\|\varphi\|_{L^\infty}$, and $\dist(\cD_1^{\varepsilon_j} \cup \cD_2^{\varepsilon_j}, \partial\Omega)$, and $C(\tau)$ additionally depends on $\tau$. By the change of variables $dS = \sqrt{1 + |D_{x'} h_2(y')|^2} dy'$, \eqref{fg_0}, and \eqref{def:c_2}, we have for any $r < 1/4$,
\begin{align}\label{U_1-U_2_flux}
&(1-\tau)\int_{|y'| < r} \Big( \frac{U_1^{\varepsilon_j} - U_2^{\varepsilon_j}}{\delta_j(y)}\Big)^{p-1} (1 - C \delta_j^{\beta/2}(y)) \sqrt{1 + |D_{x'} h_2(y')|^2} \, dy' \nonumber\\
&\quad -C(\tau) \int_{|y'| < r} e^{-\frac{C_2}{\sqrt{\varepsilon_j} + |y'|}}\, dy'\nonumber\\
&\le   \int_{\Gamma_{-,r}^{\varepsilon_j}} |D u_{\varepsilon_j}|^{p-2} D u_{\varepsilon_j} \cdot \nu \,dS\nonumber\\
& \le  (1+\tau)\int_{|y'| < r} \Big( \frac{U_1^{\varepsilon_j} - U_2^{\varepsilon_j}}{\delta_j(y)}\Big)^{p-1} (1 + C \delta_j^{\beta/2}(y)) \sqrt{1 + |D_{x'} h_2(y')|^2} \, dy' \nonumber\\
&\quad +C(\tau) \int_{|y'| < r} e^{-\frac{C_2}{\sqrt{\varepsilon_j} + |y'|}}\, dy'.
\end{align}
Note that
\begin{align}\label{error_term}
\int_{|y'| < r} e^{-\frac{C_2}{\sqrt{\varepsilon_j} + |y'|}}\, dy' \le  Cr^{(n-1)}.
\end{align}
Moreover, for $p \ge (n+1)/2$,
\begin{equation}\label{calculus}
\lim_{r \to 0_+} \lim_{{\varepsilon} \to 0_+} \int_{|y'| < r} \Big( \frac{\Theta({\varepsilon})}{\delta(y)}\Big)^{p-1} \, dy'= \frac{1}{K}.
\end{equation}
The verification of \eqref{calculus} follows from direct computations, which will be given in the Appendix (Lemma \ref{lemma_calculus}).
By taking the limit as ${j} \to \infty$ in \eqref{U_1-U_2_flux} first, then taking $r \to 0$ and $\tau \to 0$, from \eqref{flux_close_to_F}, \eqref{U_1-U_2_flux}, \eqref{error_term}, and \eqref{calculus} we get
\begin{equation}\label{limit-pos}
\lim_{j\to \infty} \big((U_1^{\varepsilon_j} - U_2^{\varepsilon_j})\Theta(\varepsilon_j)^{-1}\big)^{p-1} = K\cF.
\end{equation}
Similarly,  if $\{\varepsilon_j\}_{j\in\mathbb{N}}$ is a decreasing sequence such that $\varepsilon_j \to 0$ as $j\to \infty$ and $U_1^{\varepsilon_j}\le U_2^{\varepsilon_j}$ for every $j\in\mathbb{N}$, then we have
\begin{equation}\label{limit-neg}
\lim_{j\to \infty} \big((U_2^{\varepsilon_j} - U_1^{\varepsilon_j})\Theta(\varepsilon_j)^{-1}\big)^{p-1} = -K\cF.
\end{equation}
Therefore, we conclude that if $\cF>0$, then for any decreasing sequence $\varepsilon_j\to 0_+$, there exists $j_0\in\mathbb{N}$ such that $U_1^{\varepsilon_j}\ge U_2^{\varepsilon_j}$ for every $j\geq j_0$, since otherwise there exists a decreasing subsequence $\{\varepsilon_{j_k}\}$ such that $U_1^{\varepsilon_{j_k}}< U_2^{\varepsilon_{j_k}}$, then from \eqref{limit-neg} we should have $\cF\le 0$. Thus if $\cF>0$, \eqref{limit-pos} implies \eqref{U1-U2}. Similarly, \eqref{U1-U2} also holds when $\cF<0$. For the remaining case when $\cF=0$, let $\{\varepsilon_j\}$ be any decreasing sequence such that $\varepsilon_j\to 0$ as $j\to \infty$. Then there exists a decreasing subsequence $\{\varepsilon_{j_k}\}$ such that either $U_1^{\varepsilon_{j_k}}\le U_2^{\varepsilon_{j_k}}$ holds for every $k\in \mathbb{N}$ or $U_1^{\varepsilon_{j_k}}\ge U_2^{\varepsilon_{j_k}}$ holds for every $k\in \mathbb{N}$. Thus \eqref{limit-pos} (or \eqref{limit-neg}) implies that
$$
\lim_{k\to \infty} (U_1^{\varepsilon_{j_k}} - U_2^{\varepsilon_{j_k}})\Theta(\varepsilon_{j_k})^{-1} = 0.
$$
Therefore, \eqref{U1-U2} is also true when $\cF=0$. The proof of the theorem is completed.
\end{proof}

\section{Proofs of Theorem \ref{thm:expansion} and Proposition \ref{thm_example}}\label{sec:proofs}
In this section, we give the proofs of Theorem \ref{thm:expansion} and Proposition \ref{thm_example}.

\begin{proof}[Proof of Theorem \ref{thm:expansion}]
First, we consider the case when $p\ge (n+1)/2$.
Let \begin{equation}\label{U_1-U_2-limit-def}
    f_0(\varepsilon):=(U_1^{\varepsilon}-U_2^{\varepsilon})/\Theta(\varepsilon)-\sgn(\cF)(K|\cF|)^{1/(p-1)}.
    \end{equation}
By Theorem \ref{thm_U1-U2} and \eqref{f1_bound_U1-U2}, we know that
\begin{equation*}
    \lim_{\varepsilon\to0}f_0(\varepsilon)=0,
\end{equation*}
and
$$
|\mathbf{f}_1(x,\varepsilon)|\le C_1 \Big( {\delta}(x)^{\beta/2-1}\Theta(\varepsilon)(|K\cF|^{1/(p-1)}+|f_0(\varepsilon)|)+ \|\varphi\|_{L^\infty(\partial\Omega)}e^{-\frac{C_2}{\sqrt\varepsilon + |x'|}}\Big),
$$
where $\mathbf{f}_1$ is defined as in \eqref{f1-def}. Then \eqref{eq:thm-f1} follows from \eqref{asymptotes_U1-U2}, \eqref{U_1-U_2-limit-def}, and the above.

Next, we consider the case when $1<p<(n+1)/2$. We define
\begin{equation*}
    g_0(\varepsilon):=U_1^{\varepsilon}-U_2^{\varepsilon}-(U_1-U_2).
\end{equation*}
and $\mathbf{g}_1(x,\varepsilon) = \mathbf{f}_1(x,\varepsilon)$ as in \eqref{f1-def}. By Theorem \ref{thm_convergence},
\begin{equation*}
    \lim_{\varepsilon\to0}g_0(\varepsilon)=0,
\end{equation*}
Similar as above, \eqref{eq:thm-g1} follows.
\end{proof}

\begin{proof}[Proof of Proposition \ref{thm_example}]
For the case when $p \ge (n + 1)/2$, by the symmetry of the domain and uniqueness of the solution, we know that $u_0(x', x_n) = -u_0(x', - x_n)$. Therefore, $U_0 = u(x',0) = 0$. By the strong maximum principle, $u_0 > 0$ in $\widetilde\Omega^0 \cap \{x_n > 0\}$. Then by the Hopf lemma, $Du_0 \cdot \nu > 0$ on $\partial \cD_1^0$, and hence $\cF > 0$.

For the case when $1<p < (n + 1)/2$, we prove by contradiction. Assume $U_1 \le U_2$. Since $u_0(x', x_n) = -u_0(x', - x_n)$, we know that $u_0(x',0) = 0$ and $U_1 = - U_2$. Therefore, $U_1 \le 0$ and $u_0$ achieves minimum in $B_5^+$ on $\partial \cD_1^0$. Hence $Du_0 \cdot \nu > 0$ on $\partial \cD_1^0$ by the Hopf lemma. This implies
$$
\int_{\partial \cD_1^0} |D u_0|^{p-2} D u_0 \cdot \nu > 0,
$$
which contradicts to \eqref{touching_equation_2}$_3$.
\end{proof}

\appendix

\section{}

In the first part of the appendix, we prove Proposition \ref{prop_flux}. The proof essentially follows those of \cite{GorNov}*{Proposition 2.1} and \cite{CirSci}*{Lemma 5.1}. Our estimate is sharper due to a better estimate on $|Du_0|$ (Proposition \ref{prop_gradient_upperbound_U1-U2}).

\begin{proof}[Proof of Proposition \ref{prop_flux}]
Similar to the proof of Theorem \ref{thm_convergence}, for small $r \in (0,1/2)$, we take a smooth surface $\eta$ so that $\cD_1^\varepsilon$ is surrounded by $\Gamma_{-,s}^0 \cup \eta$. See Figure \ref{zeroflux}. We denote the surface
$$
\Sigma_r^\varepsilon := \left\{ x \in \bR^n: |x'|=r, -\frac{\varepsilon}{2} + h_2(x') < x_n < h_2(x') \right\}.
$$
Since $\int_{\partial \cD_1^{0}} |D u_{0}|^{p-2} D u_{0} \cdot \nu = \cF$ and $\int_{\partial \cD_1^{\varepsilon}} |D u_{\varepsilon}|^{p-2} D u_{\varepsilon} \cdot \nu =0$, by integration by parts, we have
\begin{equation}\label{appendix1}
-\int_{\Gamma_{-,r}^0} |D u_{0}|^{p-2} D u_{0} \cdot \nu + \int_{\eta} |D u_{0}|^{p-2} D u_{0} \cdot \nu = \cF,
\end{equation}
and
\begin{equation}\label{appendix2}
-\int_{\Gamma_{-,r}^\varepsilon} |D u_{\varepsilon}|^{p-2} D u_{\varepsilon} \cdot \nu + \int_{\Sigma_r^\varepsilon} |D u_{\varepsilon}|^{p-2} D u_{\varepsilon} \cdot \nu + \int_{\eta} |D u_{\varepsilon}|^{p-2} D u_{\varepsilon} \cdot \nu =0.
\end{equation}
Note that the minus signs appear because $\nu$ on $\Gamma_{-,r}^0$ and $\Gamma_{-,r}^\varepsilon$ are defined to be pointing upwards, while $\nu$ on $\eta$ and $\Sigma_r^\varepsilon$ are pointing away from $\cD_1^\varepsilon$. By \eqref{gradient_upperbound_U1-U2}, we have $|D u_0(x)| \le C_1 e^{-\frac{C_2}{r}}$ in $\overline{\Omega_{r}^0}$, and hence
\begin{equation}\label{appendix3}
\left|\int_{\Gamma_{-,r}^0} |D u_{0}|^{p-2} D u_{0} \cdot \nu \right| \le C_1 e^{-\frac{C_2}{r}}
\end{equation}
for some positive $\varepsilon$-independent constants $C_1$ and $C_2$. By Theorem \ref{thm_convergence}, we have
\begin{equation}\label{appendix4}
\lim_{\varepsilon \to 0_+} \int_{\eta} |D u_{\varepsilon}|^{p-2} D u_{\varepsilon} \cdot \nu = \int_{\eta} |D u_{0}|^{p-2} D u_{0} \cdot \nu.
\end{equation}
By \eqref{gradient_upperbound_U1-U2-new}, we have $|D u_\varepsilon(x)| \le C (\varepsilon + |x'|^2)^{-1} $ in ${\Omega_{1/2}^0}$. Therefore,
\begin{align}\label{appendix5}
\left| \int_{\Sigma_r^\varepsilon} |D u_{\varepsilon}|^{p-2} D u_{\varepsilon} \cdot \nu \right| &\le C \int_{|x'|=r}\int_{-\varepsilon/2+h_2(x')}^{h_2(x')} \Big( \frac{1}{\varepsilon+|x'|^2} \Big)^{p-1} \, dx_n dS \nonumber\\
&\le C\frac{\varepsilon r^{n-2}}{(\varepsilon+r^2)^{p-1}}.
\end{align}
Finally, \eqref{flux_close_to_F} follows directly from \eqref{appendix1}-\eqref{appendix5}. Proposition \ref{prop_flux} is proved.
\end{proof}

In the following, we verify \eqref{calculus}.
\begin{lemma}\label{lemma_calculus}
\eqref{calculus} holds when $p \ge (n+1)/2$.
\end{lemma}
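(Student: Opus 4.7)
The plan is to replace $h_1-h_2$ by its quadratic approximation at $0'$, diagonalize the resulting quadratic form, and evaluate the reduced integral by rescaling and the beta function. First, by Taylor's theorem and \eqref{fg_0}, write $h_1(y')-h_2(y')=Q(y')+R(y')$ with $Q(y'):=\tfrac{1}{2}(y')^\top A y'$, $A:=\tfrac{1}{2}D^2_{x'}(h_1-h_2)(0')$, and $R(y')=o(|y'|^2)$ as $y'\to 0'$. The relative convexity bound \eqref{fg_1} gives $Q(y')\geq \tfrac{c_1}{2}|y'|^2$, so there is $\omega(r)\to 0$ as $r\to 0$ with $|R(y')|\leq \omega(r)Q(y')$ on $\{|y'|<r\}$, and hence
\begin{equation*}
\big|(\varepsilon+h_1-h_2)^{-(p-1)}-(\varepsilon+Q)^{-(p-1)}\big|\leq C\omega(r)(\varepsilon+Q)^{-(p-1)}.
\end{equation*}
It therefore suffices to establish the limit with $Q$ in place of $h_1-h_2$; the $R$-error contributes at most $C\omega(r)/K$, which vanishes as $r\to 0$. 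I would then diagonalize $A=O^\top \Lambda O$ and substitute $w'=\Lambda^{1/2}O y'$, which has Jacobian $(\det A)^{-1/2}=2^{(n-1)/2}\det(D^2_{x'}(h_1-h_2)(0'))^{-1/2}$, sends $Q(y')$ to $|w'|^2$, and maps $\{|y'|<r\}$ to an ellipsoid squeezed between balls of radii comparable to $r$.

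In the case $p>(n+1)/2$, using $\Theta(\varepsilon)^{p-1}=\varepsilon^{(2p-n-1)/2}$ and rescaling $w'=\sqrt\varepsilon z'$, I would check by dominated convergence that
\begin{equation*}
\Theta(\varepsilon)^{p-1}\int_{|w'|<cr}\frac{dw'}{(\varepsilon+|w'|^2)^{p-1}}\longrightarrow\int_{\mathbb{R}^{n-1}}\frac{dz'}{(1+|z'|^2)^{p-1}}
\end{equation*}
as $\varepsilon\to 0_+$, where integrability at infinity is ensured exactly by $2(p-1)>n-1$. In polar coordinates together with $u=t^2$, the right-hand side equals $\omega_{n-1}\cdot \tfrac{1}{2}B\big(\tfrac{n-1}{2},\,p-\tfrac{n+1}{2}\big)$. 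Applying $\omega_{n-1}=2\pi^{(n-1)/2}/\Gamma(\tfrac{n-1}{2})$, the identity $B(a,b)=\Gamma(a)\Gamma(b)/\Gamma(a+b)$, and multiplying by the Jacobian above recovers $1/K$ as in \eqref{def:K}.

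In the borderline case $p=(n+1)/2$, one has $p-1=(n-1)/2$ and $\Theta(\varepsilon)^{p-1}=|\ln\varepsilon|^{-1}$. Computing in polar coordinates and substituting $s=t/\sqrt\varepsilon$,
\begin{equation*}
\int_{|w'|<R}\frac{dw'}{(\varepsilon+|w'|^2)^{(n-1)/2}}=\omega_{n-1}\int_0^{R/\sqrt\varepsilon}\frac{s^{n-2}}{(1+s^2)^{(n-1)/2}}\,ds=\omega_{n-1}\Big(\tfrac{1}{2}|\ln\varepsilon|+O_R(1)\Big),
\end{equation*}
since the integrand is asymptotic to $s^{-1}$ at infinity. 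Dividing by $|\ln\varepsilon|$ and sending $\varepsilon\to 0_+$ yields $\omega_{n-1}/2$, independently of $R$; multiplying by the Jacobian matches $1/K$ in the $p=(n+1)/2$ branch of \eqref{def:K}.

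The main technical subtlety is the uniform control in the first step: the $R$-error must be $o(1)$ as $r\to 0$ \emph{uniformly in $\varepsilon$}, so that the limits can be taken in the prescribed order $\varepsilon\to 0$ then $r\to 0$. This is exactly why the pointwise comparison $|R(y')|\leq \omega(r)Q(y')$ is used, rather than only $R(y')=o(|y'|^2)$, and why the relative convexity bound \eqref{fg_1} on $h_1-h_2$ is essential.
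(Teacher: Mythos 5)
Your approach is essentially the same as the paper's: replace $h_1-h_2$ by its quadratic Taylor polynomial with an error controlled by the $C^2$ modulus of continuity (hence $o(1)$ as $r\to 0$ \emph{uniformly in} $\varepsilon$, so the order of limits is handled correctly), diagonalize the Hessian, reduce to a radial model integral, and evaluate it by the beta function when $p>(n+1)/2$ or by the logarithmic expansion when $p=(n+1)/2$; your linear substitution to $|w'|^2$ and the paper's anisotropic spherical coordinates (the $\varphi(\theta)$ weight) are the same bookkeeping in different notation. There is one normalization slip to fix: with $A=\tfrac12 D^2_{x'}(h_1-h_2)(0')$ the correct Taylor quadratic is $Q(y')=(y')^\top A y'=\tfrac12(y')^\top D^2_{x'}(h_1-h_2)(0')y'$, not $\tfrac12(y')^\top A y'$; as written, $Q=\tfrac14(y')^\top D^2 y'$ is inconsistent with $R=o(|y'|^2)$ and with the claim that $w'=\Lambda^{1/2}Oy'$ sends $Q$ to $|w'|^2$. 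With $Q=(y')^\top A y'$ everything — the remainder estimate, the identity $|w'|^2=(y')^\top Ay'$, and the Jacobian $2^{(n-1)/2}\det\big(D^2_{x'}(h_1-h_2)(0')\big)^{-1/2}$ — is mutually consistent and yields exactly $1/K$.
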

\begin{proof}
We only give the proof for the case when $n \ge 3$. The case $n = 2$ follows similarly and is simpler. After a rotation of coordinates if necessary, we may assume that
$$
D_{x'}^2 (h_1 - h_2)(0') = \mbox{diag}~(\lambda_1, \ldots , \lambda_{n-1}).
$$
First, we replace $\delta(y)$ in the denominator with the quadratic polynomial $\varepsilon + \sum_{i=1}^{n-1} \lambda_i y_i^2/2$. By \eqref{fg_1}, \eqref{def:c_2}, and the fact that $h_1,h_2$ are $C^2$, we estimate
\begin{align*}
&\left| \int_{|y'| < r} \Big( \frac{\Theta(\varepsilon)}{\delta(y)}\Big)^{p-1} - \Big( \frac{\Theta(\varepsilon)}{\varepsilon + \sum_{i=1}^{n-1} \frac{\lambda_i}{2} y_i^2}\Big)^{p-1}  \, dy' \right|\\
&= \Theta(\varepsilon)^{p-1 } \left| \int_{|y'| < r} \frac{\Big(\varepsilon + \sum_{i=1}^{n-1} \frac{\lambda_i}{2} y_i^2\Big)^{p-1} - \delta(y)^{p-1} }{\Big[ \delta(y) \Big(\varepsilon + \sum_{i=1}^{n-1} \frac{\lambda_i}{2} y_i^2\Big) \Big]^{p-1}} \, dy' \right|\\
&\le C \Theta(\varepsilon)^{p-1} \int_{|y'| < r}  \frac{h(r)|y'|^2(\varepsilon + |y'|^2)^{p-2}}{(\varepsilon + |y'|)^{2p-2}} \, dy'\\
&\le C h(r) \int_{|y'| < r} \Big( \frac{\Theta(\varepsilon)}{\varepsilon + \sum_{i=1}^{n-1} \frac{\lambda_i}{2} y_i^2}\Big)^{p-1}  \, dy',
\end{align*}
where $h(r)$ is the modulus of continuity of $D_{x'}^2 (h_1 - h_2)$, and hence $h(r) \to 0$ as $r \to 0$, and $C$ is some positive constant independent of $\varepsilon$ and $r$. Therefore, it suffices to show that for any $r > 0$,
\begin{equation}\label{calculus_goal}
\lim_{\varepsilon \to 0_+} \int_{|y'| < r} \Big( \frac{\Theta(\varepsilon)}{\varepsilon + \sum_{i=1}^{n-1} \frac{\lambda_i}{2} y_i^2}\Big)^{p-1}  \, dy' = \frac{1}{K}.
\end{equation}
In the spherical coordinates, for $y' \in \bR^{n-1}$, we write
$$
y_1=\sqrt{\frac{2}{\lambda_1}}s \cos\theta_1,\quad y_2= \sqrt{\frac{2}{\lambda_2}}s\sin\theta_1\cos\theta_2,\quad
y_3=\sqrt{\frac{2}{\lambda_3}}s\sin\theta_1\sin\theta_2\cos\theta_3,\ldots,
$$
$$
y_{n-2}=\sqrt{\frac{2}{\lambda_{n-2}}}s \sin\theta_1\sin\theta_2\cdots\sin\theta_{n-3}\cos\theta_{n-2},
$$
$$
y_{n-1}=\sqrt{\frac{2}{\lambda_{n-1}}}s\sin\theta_1\sin\theta_2\cdots\sin\theta_{n-3}\sin\theta_{n-2},
$$
where $s \in [0, \infty)$, $\theta_1,\theta_2,\ldots,\theta_{n-3}\in [0,\pi]$ and $\theta_{n-2}\in [0,2\pi)$. For convenience of notation, we denote $\Sigma:=[0,\pi]^{n-3}\times [0,2\pi)$. By this change of variables,
\begin{align}\label{spherical_change}
&\int_{|y'| < r} \Big( \frac{\Theta(\varepsilon)}{\varepsilon + \sum_{i=1}^{n-1} \frac{\lambda_i}{2} y_i^2}\Big)^{p-1}  \, dy' \nonumber \\
&= \frac{2^{\frac{n-1}{2}}}{(\lambda_1 \cdots \lambda_{n-1})^{\frac{1}{2}}} \int_\Sigma \int_0^{\frac{r}{\varphi(\theta)}}\Big( \frac{\Theta(\varepsilon)}{\varepsilon + s^2}\Big)^{p-1} s^{n-2} J(\theta)\,dsd\theta,
\end{align}
where
$$
\varphi(\theta) = \Big( \frac{2}{\lambda_1}\cos^2\theta_1 + \frac{2}{\lambda_2} \sin^2\theta_1\cos^2\theta_2 + \cdots + \frac{2}{\lambda_{n-1}}\sin^2\theta_1\cdots\sin^2\theta_{n-2} \Big)^{\frac{1}{2}},
$$
and
$$
J(\theta)=\sin^{n-3}\theta_1 \sin^{n-4}\theta_2\cdots \sin\theta_{n-3}.
$$
Note that
$$
\sqrt{\frac{2}{\max \lambda_i}} \le \varphi(\theta) \le \sqrt{\frac{2}{\min \lambda_i}}.
$$

When $p > (n+1)/2$, $\Theta(\varepsilon) = \varepsilon^{\frac{2p-n-1}{2(p-1)}}$. By the change of variables $t = \varepsilon^{-1}s^2$, the right-hand side of \eqref{spherical_change} becomes
\begin{align*}
\frac{2^{\frac{n-3}{2}}}{(\lambda_1 \cdots \lambda_{n-1})^{\frac{1}{2}}} \int_\Sigma \int_0^{\frac{r^2}{\varphi^2(\theta)\varepsilon}} \frac{t^{\frac{n-3}{2}}}{(1+t)^{p-1}} \, dt J(\theta) d\theta.
\end{align*}
Since $(n-3)/2 - (p-1) = (n-2p-1)/2 < -1$, the integral converges as $\varepsilon \to 0$. Therefore,
\begin{align}\label{calculus_case1}
\lim_{\varepsilon \to 0_+} \int_{|y'| < r} \Big( \frac{\Theta(\varepsilon)}{\varepsilon + \sum_{i=1}^{n-1} \frac{\lambda_i}{2} y_i^2}\Big)^{p-1}  \, dy' &= \frac{2^{\frac{n-3}{2}}}{(\lambda_1 \cdots \lambda_{n-1})^{\frac{1}{2}}} \int_\Sigma \int_0^{\infty} \frac{t^{\frac{n-3}{2}}}{(1+t)^{p-1}} \, dt J(\theta) d\theta \nonumber \\
&= \frac{2^{\frac{n-3}{2}}|\bS^{n-2}|}{(\lambda_1 \cdots \lambda_{n-1})^{\frac{1}{2}}} B\Big(\frac{n-1}{2}, \frac{2p-n-1}{2} \Big),
\end{align}
where $B$ is the beta function. Recalling the identities
\begin{equation}\label{gamma_function}
|\bS^{n-2}| = \frac{2\pi^{\frac{n-1}{2}}}{\Gamma\big(\frac{n-1}{2} \big)}, \quad B\Big(\frac{n-1}{2}, \frac{2p-n-1}{2} \Big) = \frac{\Gamma\big(\frac{n-1}{2} \big)\Gamma\big(p -\frac{n-1}{2} \big)}{\Gamma(p-1)},
\end{equation}
and plugging them into \eqref{calculus_case1}, we have proved \eqref{calculus_goal} for the case when $p > (n+1)/2$.

When $p = (n+1)/2$, $\Theta(\varepsilon) = |\ln \varepsilon|^{-\frac{1}{p-1}}$. By the change of variables $w = \varepsilon^{-1/2}s$, the right-hand side of \eqref{spherical_change} becomes
\begin{align*}
&\frac{2^{\frac{n-1}{2}}|\ln \varepsilon|^{-1}}{(\lambda_1 \cdots \lambda_{n-1})^{\frac{1}{2}}} \int_\Sigma \int_0^{\frac{r}{\varphi(\theta)\sqrt\varepsilon}}\frac{w^{n-2}}{(1+w^2)^{\frac{n-1}{2}}} \, dw J(\theta)\, d\theta\\
&= \frac{2^{\frac{n-1}{2}}|\ln \varepsilon|^{-1}}{(\lambda_1 \cdots \lambda_{n-1})^{\frac{1}{2}}} \int_\Sigma \int_0^{\frac{r}{\varphi(\theta)\sqrt\varepsilon}}\frac{w}{1+w^2} \, dw J(\theta)\, d\theta\\
&+ \frac{2^{\frac{n-1}{2}}|\ln \varepsilon|^{-1}}{(\lambda_1 \cdots \lambda_{n-1})^{\frac{1}{2}}} \int_\Sigma \int_0^{\frac{r}{\varphi(\theta)\sqrt\varepsilon}} \left( \frac{w^{n-2}}{(1+w^2)^{\frac{n-1}{2}}} - \frac{w}{1+w^2} \right) \, dw J(\theta)\, d\theta\\
&=: {\rm I} + {\rm II}.
\end{align*}
By direct computations,
\begin{align*}
{\rm I} &= \frac{2^{\frac{n-3}{2}}|\ln \varepsilon|^{-1}}{(\lambda_1 \cdots \lambda_{n-1})^{\frac{1}{2}}} \int_\Sigma \int_0^{\frac{r}{\varphi(\theta)\sqrt\varepsilon}} \, d \ln (1 + w^2) J(\theta)\, d\theta\\
&= \frac{2^{\frac{n-3}{2}}|\ln \varepsilon|^{-1}}{(\lambda_1 \cdots \lambda_{n-1})^{\frac{1}{2}}} \int_\Sigma \Big[\ln \big( \varepsilon + \frac{r^2}{\varphi^2(\theta)} \big) - \ln \varepsilon \Big] J(\theta)\, d\theta.
\end{align*}
Therefore, by \eqref{gamma_function},
\begin{equation}\label{calculus_case2}
\lim_{\varepsilon \to 0_+} {\rm I} = \frac{2^{\frac{n-3}{2}}|\bS^{n-2}|}{(\lambda_1 \cdots \lambda_{n-1})^{\frac{1}{2}}} = \frac{(2\pi)^{\frac{n-1}{2}}}{(\lambda_1 \cdots \lambda_{n-1})^{\frac{1}{2}}\Gamma\big(\frac{n-1}{2} \big)}.
\end{equation}
To estimate ${\rm II}$, we split the integral over $(0, \frac{r}{\varphi(\theta)\sqrt\varepsilon})$ into $(0,1)$ and $(1, \frac{r}{\varphi(\theta)\sqrt\varepsilon})$, and denote them by ${\rm II}_1$ and ${\rm II}_2$, respectively. It is easily seen that $|{\rm II}_1| \le C|\ln \varepsilon|^{-1}$. To estimate ${\rm II}_2$, we have
$$
|{\rm II}_2| \le C|\ln \varepsilon|^{-1} \int_\Sigma \int_1^{\frac{r}{\varphi(\theta)\sqrt\varepsilon}} \frac{w \Big[ w^{n-3} - (1+w^2)^{\frac{n-3}{2}} \Big]}{(1 + w^2)^{\frac{n-1}{2}}}\, dw J(\theta)\, d\theta.
$$
By the mean value theorem, there exists a $\xi \in (w^2, 1+w^2)$, such that
$$
w^{n-3} - (1+w^2)^{\frac{n-3}{2}} = - \frac{n-3}{2} \xi^{\frac{n-5}{2}}.
$$
Note that $(w^2, 1 + w^2) \subset (w^2 , 2w^2)$ when $w \ge 1$. Therefore,
$$
\left| \int_1^{\frac{r}{\varphi(\theta)\sqrt\varepsilon}} \frac{w \Big[ w^{n-3} - (1+w^2)^{\frac{n-3}{2}} \Big]}{(1 + w^2)^{\frac{n-1}{2}}}\, dw \right| \le C\int_1^\infty \frac{w^{n-4}}{(1 + w^2)^{\frac{n-1}{2}}} \le C,
$$
which implies $|{\rm II}_2| \le C|\ln \varepsilon|^{-1}$. Hence, by \eqref{calculus_case2} and the estimate $|{\rm II}_1| + |{\rm II}_2| \le C|\ln \varepsilon|^{-1}$, we have proved \eqref{calculus_goal} for the case when $p = (n+1)/2$.
\end{proof}

\bibliographystyle{amsplain}
\begin{bibdiv}
\begin{biblist}

\bib{ACKLY}{article}{
      author={Ammari, H.},
      author={Ciraolo, G.},
      author={Kang, H.},
      author={Lee, H.},
      author={Yun, K.},
       title={Spectral analysis of the {N}eumann-{P}oincar\'{e} operator and
  characterization of the stress concentration in anti-plane elasticity},
        date={2013},
        ISSN={0003-9527},
     journal={Arch. Ration. Mech. Anal.},
      volume={208},
      number={1},
       pages={275\ndash 304},
  url={https://doi-org.proxy.libraries.rutgers.edu/10.1007/s00205-012-0590-8},
      review={\MR{3021549}},
}

\bib{ADY}{article}{
      author={Ammari, H.},
      author={Davies, B.},
      author={Yu, S.},
       title={Close-to-touching acoustic subwavelength resonators:
  eigenfrequency separation and gradient blow-up},
        date={2020},
        ISSN={1540-3459},
     journal={Multiscale Model. Simul.},
      volume={18},
      number={3},
       pages={1299\ndash 1317},
         url={https://doi-org.proxy.libraries.rutgers.edu/10.1137/20M1313350},
      review={\MR{4128998}},
}

\bib{AKLLL}{article}{
      author={Ammari, H.},
      author={Kang, H.},
      author={Lee, H.},
      author={Lee, J.},
      author={Lim, M.},
       title={Optimal estimates for the electric field in two dimensions},
        date={2007},
        ISSN={0021-7824},
     journal={J. Math. Pures Appl. (9)},
      volume={88},
      number={4},
       pages={307\ndash 324},
  url={https://doi-org.proxy.libraries.rutgers.edu/10.1016/j.matpur.2007.07.005},
      review={\MR{2384571}},
}

\bib{AKL}{article}{
      author={Ammari, H.},
      author={Kang, H.},
      author={Lim, M.},
       title={Gradient estimates for solutions to the conductivity problem},
        date={2005},
        ISSN={0025-5831},
     journal={Math. Ann.},
      volume={332},
      number={2},
       pages={277\ndash 286},
  url={https://doi-org.proxy.libraries.rutgers.edu/10.1007/s00208-004-0626-y},
      review={\MR{2178063}},
}

\bib{AntRod}{article}{
      author={Antontsev, S.~N.},
      author={Rodrigues, J.~F.},
       title={On stationary thermo-rheological viscous flows},
        date={2006},
        ISSN={0430-3202,1827-1510},
     journal={Ann. Univ. Ferrara Sez. VII Sci. Mat.},
      volume={52},
      number={1},
       pages={19\ndash 36},
         url={https://doi.org/10.1007/s11565-006-0002-9},
      review={\MR{2246902}},
}

\bib{BASL}{article}{
      author={Babu\v{s}ka, I.},
      author={Andersson, B.},
      author={Smith, P.J.},
      author={Levin, K.},
       title={Damage analysis of fiber composites. {I}. {S}tatistical analysis
  on fiber scale},
        date={1999},
        ISSN={0045-7825},
     journal={Comput. Methods Appl. Mech. Engrg.},
      volume={172},
      number={1-4},
       pages={27\ndash 77},
  url={https://doi-org.proxy.libraries.rutgers.edu/10.1016/S0045-7825(98)00225-4},
      review={\MR{1685902}},
}

\bib{BLLY}{article}{
      author={Bao, E.},
      author={Li, H.G.},
      author={Li, Y.Y.},
      author={Yin, B.},
       title={Derivative estimates of solutions of elliptic systems in narrow
  regions},
        date={2014},
        ISSN={0033-569X},
     journal={Quart. Appl. Math.},
      volume={72},
      number={3},
       pages={589\ndash 596},
         url={https://doi.org/10.1090/S0033-569X-2014-01339-0},
      review={\MR{3237564}},
}

\bib{BLY1}{article}{
      author={Bao, E.},
      author={Li, Y.Y.},
      author={Yin, B.},
       title={Gradient estimates for the perfect conductivity problem},
        date={2009},
        ISSN={0003-9527},
     journal={Arch. Ration. Mech. Anal.},
      volume={193},
      number={1},
       pages={195\ndash 226},
  url={https://doi-org.proxy.libraries.rutgers.edu/10.1007/s00205-008-0159-8},
      review={\MR{2506075}},
}

\bib{BLY2}{article}{
      author={Bao, E.},
      author={Li, Y.Y.},
      author={Yin, B.},
       title={Gradient estimates for the perfect and insulated conductivity
  problems with multiple inclusions},
        date={2010},
        ISSN={0360-5302},
     journal={Comm. Partial Differential Equations},
      volume={35},
      number={11},
       pages={1982\ndash 2006},
  url={https://doi-org.proxy.libraries.rutgers.edu/10.1080/03605300903564000},
      review={\MR{2754076}},
}

\bib{BT1}{incollection}{
      author={Bonnetier, E.},
      author={Triki, F.},
       title={Pointwise bounds on the gradient and the spectrum of the
  {N}eumann-{P}oincar\'{e} operator: the case of 2 discs},
        date={2012},
   booktitle={Multi-scale and high-contrast {PDE}: from modelling, to
  mathematical analysis, to inversion},
      series={Contemp. Math.},
      volume={577},
   publisher={Amer. Math. Soc., Providence, RI},
       pages={81\ndash 91},
  url={https://doi-org.proxy.libraries.rutgers.edu/10.1090/conm/577/11464},
      review={\MR{2985067}},
}

\bib{BT2}{article}{
      author={Bonnetier, E.},
      author={Triki, F.},
       title={On the spectrum of the {P}oincar\'{e} variational problem for two
  close-to-touching inclusions in 2{D}},
        date={2013},
        ISSN={0003-9527},
     journal={Arch. Ration. Mech. Anal.},
      volume={209},
      number={2},
       pages={541\ndash 567},
  url={https://doi-org.proxy.libraries.rutgers.edu/10.1007/s00205-013-0636-6},
      review={\MR{3056617}},
}

\bib{BIK}{article}{
      author={Brander, T.},
      author={Ilmavirta, J.},
      author={Kar, M.},
       title={Superconductive and insulating inclusions for linear and
  non-linear conductivity equations},
        date={2018},
        ISSN={1930-8337},
     journal={Inverse Probl. Imaging},
      volume={12},
      number={1},
       pages={91\ndash 123},
         url={https://doi.org/10.3934/ipi.2018004},
      review={\MR{3810150}},
}

\bib{CY}{article}{
      author={Capdeboscq, Y.},
      author={Yang~Ong, S.C.},
       title={Quantitative {J}acobian determinant bounds for the conductivity
  equation in high contrast composite media},
        date={2020},
        ISSN={1531-3492},
     journal={Discrete Contin. Dyn. Syst. Ser. B},
      volume={25},
      number={10},
       pages={3857\ndash 3887},
  url={https://doi-org.proxy.libraries.rutgers.edu/10.3934/dcdsb.2020228},
      review={\MR{4147367}},
}

\bib{CirSci2}{article}{
      author={Ciraolo, G.},
      author={Sciammetta, A.},
       title={Gradient estimates for the perfect conductivity problem in
  anisotropic media},
        date={2019},
        ISSN={0021-7824},
     journal={J. Math. Pures Appl. (9)},
      volume={127},
       pages={268\ndash 298},
         url={https://doi.org/10.1016/j.matpur.2018.09.006},
      review={\MR{3960144}},
}

\bib{CirSci}{article}{
      author={Ciraolo, G.},
      author={Sciammetta, A.},
       title={Stress concentration for closely located inclusions in nonlinear
  perfect conductivity problems},
        date={2019},
        ISSN={0022-0396},
     journal={J. Differential Equations},
      volume={266},
      number={9},
       pages={6149\ndash 6178},
         url={https://doi.org/10.1016/j.jde.2018.10.041},
      review={\MR{3912777}},
}

\bib{dibenedetto1993higher}{article}{
      author={DiBenedetto, E.},
      author={Manfredi, J.},
       title={On the higher integrability of the gradient of weak solutions of
  certain degenerate elliptic systems},
        date={1993},
        ISSN={0002-9327},
     journal={Amer. J. Math.},
      volume={115},
      number={5},
       pages={1107\ndash 1134},
         url={https://doi.org/10.2307/2375066},
      review={\MR{1246185}},
}

\bib{DL}{article}{
      author={Dong, H.},
      author={Li, H.G.},
       title={Optimal estimates for the conductivity problem by {G}reen's
  function method},
        date={2019},
        ISSN={0003-9527},
     journal={Arch. Ration. Mech. Anal.},
      volume={231},
      number={3},
       pages={1427\ndash 1453},
  url={https://doi-org.proxy.libraries.rutgers.edu/10.1007/s00205-018-1301-x},
      review={\MR{3902466}},
}

\bib{DLY}{article}{
      author={Dong, H.},
      author={Li, Y.Y.},
      author={Yang, Z.},
       title={Optimal gradient estimates of solutions to the insulated
  conductivity problem in dimension greater than two},
        date={2021},
        note={arXiv:2110.11313, J. Eur. Math. Soc., to appear},
}

\bib{DLY2}{article}{
      author={Dong, H.},
      author={Li, Y.Y.},
      author={Yang, Z.},
       title={Gradient estimates for the insulated conductivity problem: The
  non-umbilical case},
        date={2022},
        note={arXiv:2203.10081},
}

\bib{DYZ23}{article}{
      author={Dong, H.},
      author={Yang, Z.},
      author={Zhu, H.},
       title={The {I}nsulated {C}onductivity {P}roblem with {$p$}-{L}aplacian},
        date={2023},
        ISSN={0003-9527,1432-0673},
     journal={Arch. Ration. Mech. Anal.},
      volume={247},
      number={5},
       pages={95},
         url={https://doi.org/10.1007/s00205-023-01926-0},
      review={\MR{4635021}},
}

\bib{DZ}{article}{
      author={Dong, H.},
      author={Zhang, H.},
       title={On an elliptic equation arising from composite materials},
        date={2016},
        ISSN={0003-9527},
     journal={Arch. Ration. Mech. Anal.},
      volume={222},
      number={1},
       pages={47\ndash 89},
  url={https://doi-org.proxy.libraries.rutgers.edu/10.1007/s00205-016-0996-9},
      review={\MR{3519966}},
}

\bib{DuzMin10}{article}{
      author={Duzaar, F.},
      author={Mingione, G.},
       title={Gradient estimates via linear and nonlinear potentials},
        date={2010},
        ISSN={0022-1236},
     journal={J. Funct. Anal.},
      volume={259},
      number={11},
       pages={2961\ndash 2998},
         url={https://doi.org/10.1016/j.jfa.2010.08.006},
      review={\MR{2719282}},
}

\bib{DuzMin11}{article}{
      author={Duzaar, F.},
      author={Mingione, G.},
       title={Gradient estimates via non-linear potentials},
        date={2011},
        ISSN={0002-9327},
     journal={Amer. J. Math.},
      volume={133},
      number={4},
       pages={1093\ndash 1149},
         url={https://doi.org/10.1353/ajm.2011.0023},
      review={\MR{2823872}},
}

\bib{GarKoh}{article}{
      author={Garroni, A.},
      author={Kohn, R.~V.},
       title={Some three-dimensional problems related to dielectric breakdown
  and polycrystal plasticity},
        date={2003},
        ISSN={1364-5021,1471-2946},
     journal={R. Soc. Lond. Proc. Ser. A Math. Phys. Eng. Sci.},
      volume={459},
      number={2038},
       pages={2613\ndash 2625},
         url={https://doi.org/10.1098/rspa.2003.1152},
      review={\MR{2011358}},
}

\bib{Gor}{article}{
      author={Gorb, Y.},
       title={Singular behavior of electric field of high-contrast concentrated
  composites},
        date={2015},
        ISSN={1540-3459},
     journal={Multiscale Model. Simul.},
      volume={13},
      number={4},
       pages={1312\ndash 1326},
         url={https://doi-org.proxy.libraries.rutgers.edu/10.1137/140982076},
      review={\MR{3418221}},
}

\bib{GorNov}{article}{
      author={Gorb, Y.},
      author={Novikov, A.},
       title={Blow-up of solutions to a {$p$}-{L}aplace equation},
        date={2012},
        ISSN={1540-3459},
     journal={Multiscale Model. Simul.},
      volume={10},
      number={3},
       pages={727\ndash 743},
         url={https://doi.org/10.1137/110857167},
      review={\MR{3022019}},
}

\bib{Idiart}{article}{
      author={Idiart, M.~I.},
       title={The macroscopic behavior of power-law and ideally plastic
  materials with elliptical distribution of porosity},
        date={2008},
        ISSN={0093-6413},
     journal={Mech. Res. Commun.},
      volume={35},
      number={8},
       pages={583\ndash 588},
  url={https://www.sciencedirect.com/science/article/pii/S0093641308000657},
}

\bib{KLY1}{article}{
      author={Kang, H.},
      author={Lim, M.},
      author={Yun, K.},
       title={Asymptotics and computation of the solution to the conductivity
  equation in the presence of adjacent inclusions with extreme conductivities},
        date={2013},
        ISSN={0021-7824},
     journal={J. Math. Pures Appl. (9)},
      volume={99},
      number={2},
       pages={234\ndash 249},
  url={https://doi-org.proxy.libraries.rutgers.edu/10.1016/j.matpur.2012.06.013},
      review={\MR{3007847}},
}

\bib{KLY2}{article}{
      author={Kang, H.},
      author={Lim, M.},
      author={Yun, K.},
       title={Characterization of the electric field concentration between two
  adjacent spherical perfect conductors},
        date={2014},
        ISSN={0036-1399},
     journal={SIAM J. Appl. Math.},
      volume={74},
      number={1},
       pages={125\ndash 146},
         url={https://doi-org.proxy.libraries.rutgers.edu/10.1137/130922434},
      review={\MR{3162415}},
}

\bib{KL}{article}{
      author={Kim, J.},
      author={Lim, M.},
       title={Electric field concentration in the presence of an inclusion with
  eccentric core-shell geometry},
        date={2019},
        ISSN={0025-5831},
     journal={Math. Ann.},
      volume={373},
      number={1-2},
       pages={517\ndash 551},
  url={https://doi-org.proxy.libraries.rutgers.edu/10.1007/s00208-018-1688-6},
      review={\MR{3968879}},
}

\bib{LevKoh}{article}{
      author={Levy, O.},
      author={Kohn, R.~V.},
       title={Duality relations for non-{O}hmic composites, with applications
  to behavior near percolation},
        date={1998},
        ISSN={0022-4715,1572-9613},
     journal={J. Statist. Phys.},
      volume={90},
      number={1-2},
       pages={159\ndash 189},
         url={https://doi.org/10.1023/A:1023251701546},
      review={\MR{1611060}},
}

\bib{L}{article}{
      author={Li, H.G.},
       title={Asymptotics for the {E}lectric {F}ield {C}oncentration in the
  {P}erfect {C}onductivity {P}roblem},
        date={2020},
        ISSN={0036-1410},
     journal={SIAM J. Math. Anal.},
      volume={52},
      number={4},
       pages={3350\ndash 3375},
         url={https://doi-org.proxy.libraries.rutgers.edu/10.1137/19M1282623},
      review={\MR{4126320}},
}

\bib{LLY}{article}{
      author={Li, H.G.},
      author={Li, Y.Y.},
      author={Yang, Z.},
       title={Asymptotics of the gradient of solutions to the perfect
  conductivity problem},
        date={2019},
        ISSN={1540-3459},
     journal={Multiscale Model. Simul.},
      volume={17},
      number={3},
       pages={899\ndash 925},
         url={https://doi-org.proxy.libraries.rutgers.edu/10.1137/18M1214329},
      review={\MR{3977105}},
}

\bib{LWX}{article}{
      author={Li, H.G.},
      author={Wang, F.},
      author={Xu, L.},
       title={Characterization of electric fields between two spherical perfect
  conductors with general radii in 3{D}},
        date={2019},
        ISSN={0022-0396},
     journal={J. Differential Equations},
      volume={267},
      number={11},
       pages={6644\ndash 6690},
  url={https://doi-org.proxy.libraries.rutgers.edu/10.1016/j.jde.2019.07.007},
      review={\MR{4001067}},
}

\bib{LY}{article}{
      author={Li, Y.Y.},
      author={Yang, Z.},
       title={Gradient estimates of solutions to the conductivity problem with
  flatter insulators},
        date={2021},
        ISSN={1672-4070},
     journal={Anal. Theory Appl.},
      volume={37},
      number={1},
       pages={114\ndash 128},
         url={https://doi.org/10.4208/ata.2021.pr80.12},
      review={\MR{4252031}},
}

\bib{LY2}{article}{
      author={Li, Y.Y.},
      author={Yang, Z.},
       title={Gradient estimates of solutions to the insulated conductivity
  problem in dimension greater than two},
        date={2023},
        ISSN={0025-5831},
     journal={Math. Ann.},
      volume={385},
      number={3-4},
       pages={1775\ndash 1796},
         url={https://doi.org/10.1007/s00208-022-02368-x},
      review={\MR{4566706}},
}

\bib{Lie}{article}{
      author={Lieberman, G.~M.},
       title={Boundary regularity for solutions of degenerate elliptic
  equations},
        date={1988},
        ISSN={0362-546X},
     journal={Nonlinear Anal.},
      volume={12},
      number={11},
       pages={1203\ndash 1219},
         url={https://doi.org/10.1016/0362-546X(88)90053-3},
      review={\MR{969499}},
}

\bib{lieberman1991natural}{article}{
      author={Lieberman, G.~M.},
       title={The natural generalization of the natural conditions of
  {L}adyzhenskaya and {U}ral\cprime tseva for elliptic equations},
        date={1991},
        ISSN={0360-5302},
     journal={Comm. Partial Differential Equations},
      volume={16},
      number={2-3},
       pages={311\ndash 361},
         url={https://doi.org/10.1080/03605309108820761},
      review={\MR{1104103}},
}

\bib{LimYun}{article}{
      author={Lim, M.},
      author={Yun, K.},
       title={Blow-up of electric fields between closely spaced spherical
  perfect conductors},
        date={2009},
        ISSN={0360-5302},
     journal={Comm. Partial Differential Equations},
      volume={34},
      number={10-12},
       pages={1287\ndash 1315},
  url={https://doi-org.proxy.libraries.rutgers.edu/10.1080/03605300903079579},
      review={\MR{2581974}},
}

\bib{Ruzicka}{book}{
      author={Rů\v{z}i\v{c}ka, M.},
       title={Electrorheological fluids: modeling and mathematical theory},
      series={Lecture Notes in Mathematics},
   publisher={Springer-Verlag, Berlin},
        date={2000},
      volume={1748},
        ISBN={3-540-41385-5},
         url={https://doi.org/10.1007/BFb0104029},
      review={\MR{1810360}},
}

\bib{Suquet}{article}{
      author={Suquet, P.-M.},
       title={Overall potentials and extremal surfaces of power law or ideally
  plastic composites},
        date={1993},
        ISSN={0022-5096,1873-4782},
     journal={J. Mech. Phys. Solids},
      volume={41},
      number={6},
       pages={981\ndash 1002},
         url={https://doi.org/10.1016/0022-5096(93)90051-G},
      review={\MR{1220788}},
}

\bib{Vaz}{article}{
      author={V\'{a}zquez, J.~L.},
       title={A strong maximum principle for some quasilinear elliptic
  equations},
        date={1984},
        ISSN={0095-4616},
     journal={Appl. Math. Optim.},
      volume={12},
      number={3},
       pages={191\ndash 202},
         url={https://doi.org/10.1007/BF01449041},
      review={\MR{768629}},
}

\bib{We}{article}{
      author={Weinkove, B.},
       title={The insulated conductivity problem, effective gradient estimates
  and the maximum principle},
        date={2023},
        ISSN={0025-5831},
     journal={Math. Ann.},
      volume={385},
      number={1-2},
       pages={1\ndash 16},
         url={https://doi.org/10.1007/s00208-021-02314-3},
      review={\MR{4542709}},
}

\bib{Y1}{article}{
      author={Yun, K.},
       title={Estimates for electric fields blown up between closely adjacent
  conductors with arbitrary shape},
        date={2007},
        ISSN={0036-1399},
     journal={SIAM J. Appl. Math.},
      volume={67},
      number={3},
       pages={714\ndash 730},
         url={https://doi-org.proxy.libraries.rutgers.edu/10.1137/060648817},
      review={\MR{2300307}},
}

\bib{Y2}{article}{
      author={Yun, K.},
       title={Optimal bound on high stresses occurring between stiff fibers
  with arbitrary shaped cross-sections},
        date={2009},
        ISSN={0022-247X},
     journal={J. Math. Anal. Appl.},
      volume={350},
      number={1},
       pages={306\ndash 312},
  url={https://doi-org.proxy.libraries.rutgers.edu/10.1016/j.jmaa.2008.09.057},
      review={\MR{2476915}},
}

\bib{Y3}{article}{
      author={Yun, K.},
       title={An optimal estimate for electric fields on the shortest line
  segment between two spherical insulators in three dimensions},
        date={2016},
        ISSN={0022-0396},
     journal={J. Differential Equations},
      volume={261},
      number={1},
       pages={148\ndash 188},
  url={https://doi-org.proxy.libraries.rutgers.edu/10.1016/j.jde.2016.03.005},
      review={\MR{3487255}},
}

\end{biblist}
\end{bibdiv}

\end{document}